\numberwithin{equation}{section}
\newtheorem{theorem}{Theorem}[section]
\newtheorem{proposition}[theorem]{Proposition}
\newtheorem{remark}{Remark}[section]
\newtheorem{notation}{Notation}[section]
\newtheorem{lemma}[theorem]{Lemma}
\newenvironment{proof}[1][Proof]{\noindent\textbf{#1.} }{\hfill $\square$}
\begin{document}

\author{Guofeng Che and Haibo Chen \\
School of Mathematics and Statistics,\\
Central South University, Changsha, Hunan, 410083, P. R. China \and %
Tsung-fang Wu$^{*}$ \\
Department of Applied Mathematics \\
National University of Kaohsiung, Kaohsiung, Taiwan}
\title{Existence and multiplicity of positive solutions for fractional
Laplacian systems with nonlinear coupling }
\date{}
\maketitle

\begin{abstract}
It is well known that a single nonlinear fractional Schr\"{o}dinger equation
with a potential $V(x)$ may have a positive solution that is concentrated at
the nondegenerate minimum point of $V(x)$ as the positive parameter $%
\varepsilon $ sufficiently small (see \cite{CZ, DPW, FMV}). While in this
paper, we can find two different positive solutions for weakly coupled
fractional Schr\"{o}dinger systems with two potentials $V_{1}(x)$ and $%
V_{2}(x)$ having the same minimum point and these positive solutions are
concentrated at this minimum point. In fact that by using the energy
estimates, Nehari manifold technique and the Lusternik-Schnirelmann theory
of critical points, we obtain the multiplicity results for a class of weakly
coupled fractional Schr\"{o}dinger system. Furthermore, the existence and
nonexistence of least energy positive solutions are also explored.
\end{abstract}

\footnotetext{%
G. Che was supported by the Fundamental Research Funds for the Central
Universities of Central South University (Grant No. 2017zzts058). H. Chen
was supported by the National Natural Science Foundation of China (Grant No.
11671403). T.-F. Wu was supported in part by the Ministry of Science and
Technology, Taiwan (Grant No. 106-2115-M-390-001-MY2) and the National Center for
Theoretical Sciences, Taiwan.
\par
$^{\ast }$ Corresponding author. ~\textit{E-mail addresses}: tfwu@nuk.edu.tw
(T.-F. Wu).
\par
~~\textit{E-mail addresses}: cheguofeng222@163.com (G. Che),
math\_chb@163.com (H. Chen).} \vskip2mm

\textbf{Keywords:} Fractional Laplacian systems; Lusternik--Schnirelmann
theory; Positive solutions; Variational methods \vskip2mm

\textbf{2010 Mathematics Subject Classification.} Primary: 35J50, 35A01;
Secondary: \newline
35B38

\section{Introduction}

In this paper, we are concerned with the following nonlinear systems of two
weakly coupled fractional Schr\"{o}dinger equations with nonconstant
potentials:
\begin{equation}
\left\{
\begin{array}{ll}
\varepsilon ^{2\alpha }(-\Delta )^{\alpha }u+V_{1}\left( x\right) u=\left(
\mu _{1}|u|^{2p}+\beta \left( x\right) |u|^{q-1}|v|^{q+1}\right) u, & \text{%
in }\mathbb{R}^{N}, \\
\varepsilon ^{2\alpha }(-\Delta )^{\alpha }v+V_{2}\left( x\right) v=\left(
\mu _{2}|v|^{2p}+\beta \left( x\right) |v|^{q-1}|u|^{q+1}\right) v, & \text{%
in }\mathbb{R}^{N}\,, \\
0\leq u,v\in H^{\alpha }(\mathbb{R}^{N}), & \text{in }\mathbb{R}^{N},%
\end{array}%
\right.  \tag{$P_{\varepsilon }$}
\end{equation}%
where $\alpha \in (0,1)$, $0<q\leq p\leq 1$ if $N\leq 4\alpha $ and $0<q\leq
p<\frac{2\alpha }{N-2\alpha }$ if $N>4\alpha $, $0<\varepsilon \ll 1$ is a
small parameter and $\mu _{l},~l=1,2,$ are positive constants. Throughout
this paper, we assume that the potentials $V_{1}(x),V_{2}(x)$ and the
coupling function $\beta (x)$ satisfy the following hypotheses:

\begin{itemize}
\item[$\left( D_{1}\right) $] $V_{1}\left( x\right) $ and $V_{2}\left(
x\right) $ are bounded and uniformly continuous functions on $\mathbb{R}%
^{N}; $

\item[$\left( D_{2}\right) $] the potentials $V_{1}\left( x\right) $ and $%
V_{2}\left( x\right) $ have isolated gobal minimum points at $%
z_{1,1},z_{1,2},\ldots ,z_{1,k}$ and $z_{2,1},z_{2,2},\ldots ,z_{2,\ell }$,
respectively. Here, an isolated gobal minimum point $z_{1,i}$ of $%
V_{1}\left( x\right) $ (similarly to $z_{2,j}$ of $V_{2}(x)$) means that $%
z_{1,i}$ is the unique minimum point of $V_{1}\left( x\right) $ in $%
B_{r_{0}}(z_{1,i})$ (a neighborhood of $z_{1,i})$, where $r_{0}$ is a
positive constant. Moreover,%
\begin{equation*}
V_{1}\left( z_{1,i}\right) =\lambda _{1}\equiv \min \left\{ V_{1}\left(
x\right) :x\in \mathbb{R}^{N}\right\} >0\text{ for all }i=1,2\ldots ,k
\end{equation*}%
and%
\begin{equation*}
V_{2}\left( z_{2,j}\right) =\lambda _{2}\equiv \min \left\{ V_{2}\left(
x\right) :x\in \mathbb{R}^{N}\right\} >0\text{ for all }j=1,2\ldots ,\ell ;
\end{equation*}

\item[$\left( D_{3}\right) $] $\beta \left( x\right) $ is a nonpositive,
bounded and uniformly continuous function on $\mathbb{R}^{N};$

\item[$\left( D_{4}\right) $] there exist a positive integer $1\leq m\leq
\min \left\{ k,\ell \right\} $ and a constant $c_{0}>0$ such that $%
z_{1,i}=z_{2,j}$ and $\beta \left( x\right) \leq -c_{0}$ for all $x\in
B_{r_{0}}(z_{1,i})$ for all $1\leq i,j\leq m,$ where $r_{0}>0$ as in
condition $\left( D_{2}\right) .$
\end{itemize}

In recent years great interest has been devoted to the study of elliptic
equations involving the fractional Laplacian operator $(-\triangle)^{%
\alpha},~\alpha\in (0,1)$. This type of operators appears in a quite natural
way in many different applications, such as, fractional quantum mechanics,
geophysical fluid dynamics, population dynamics, phase transitions, flames
propagation, anomalous diffusion, crystal dislocation, materials science,
water waves and soft thin films. For more details about the applications, we
refer the readers to \cite{FQT, Las1,Las2} and the references therein.

Much attention has been paid to the single fractional Laplacian equation:
\begin{equation}
\varepsilon ^{2\alpha }(-\triangle )^{\alpha }u+V(x)u=f(x,u),\quad \text{in}~%
\mathbb{R}^{N}.  \label{1.1}
\end{equation}%
Eq. $(\ref{1.1})$ is related to the stationary analogue of the fractional
Schr\"{o}dinger equation:
\begin{equation*}
i\varepsilon \frac{\partial u}{\partial t}=\varepsilon ^{2\alpha
}(-\triangle )^{\alpha }u+V(x)u-f(x,u)\quad \text{in}~\mathbb{R}^{N},
\end{equation*}%
which was introduced by \cite{Las1,Las2} through expanding the Feynman path
integral from the Brownian--like to the L\'{e}vy--like quantum mechanical
paths. In the past several decades, with the aid of variational methods, the
existence, nonexistence, multiplicity, uniqueness, regularity and the
asymptotic decay properties of solutions for Eq. $(\ref{1.1})$ have been
obtained under various hypotheses on the potential $V(x)$ and the
nonlinearity $f(x,u)$, see \cite{CZ, DPW, Fal, FLS, FMV, Sec} and the
references therein. For instance, Frank, Lenzmann and Silvestre \cite{FLS}
studied the following single fractional Schr\"{o}dinger equation:
\begin{equation}
\left\{
\begin{array}{ll}
(-\triangle )^{\alpha }u+\lambda _{l}u=\mu _{l}|u|^{2p}u, & \text{in }%
\mathbb{R}^{N}, \\
u\in H^{\alpha }(\mathbb{R}^{N}),\quad u>0, & \text{in }\mathbb{R}^{N}\,,%
\end{array}%
\right.  \tag{$\widehat{E}_{l}$}
\end{equation}%
where $l=1,2$, and obtained the following properties of the ground states
for Eq. $(\widehat{E}_{l})$.

\begin{proposition}
\label{P1.1}Let $N\geq 1,~\alpha \in (0,1)$ and $0<p<\frac{2_{\alpha }^{\ast
}-2}{2}$, where
\begin{equation*}
2_{\alpha }^{\ast }=\left\{
\begin{array}{ll}
\frac{2N}{N-2\alpha }, & \text{\textrm{if }}N>2\alpha , \\
+\infty , & \text{\textrm{if }}N\leq 2\alpha .%
\end{array}%
\right.
\end{equation*}
\end{proposition}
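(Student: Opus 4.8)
The statement collects the standard existence, symmetry, regularity, decay, nondegeneracy and uniqueness properties of the ground state of the scalar equation $(\widehat{E}_{l})$; the plan is to recover them by the variational-plus-extension strategy of \cite{FLS}. After the rescaling $u(x)=(\lambda_{l}/\mu_{l})^{1/(2p)}\,w(\lambda_{l}^{1/(2\alpha)}x)$ one is reduced to the model problem $(-\Delta)^{\alpha}w+w=w^{2p+1}$ in $\mathbb{R}^{N}$, and I would first produce a ground state by constrained minimization: minimize $\|(-\Delta)^{\alpha/2}u\|_{2}^{2}+\|u\|_{2}^{2}$ over $\{\,\|u\|_{2p+2}^{2p+2}=1\,\}$, equivalently minimize a Weinstein-type quotient. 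Since $0<p<(2_{\alpha}^{\ast}-2)/2$, the embedding $H^{\alpha}(\mathbb{R}^{N})\hookrightarrow L^{2p+2}(\mathbb{R}^{N})$ is continuous and locally compact, so the only obstruction to compactness of a minimizing sequence is translation invariance; I would dispose of this with Lions' concentration--compactness principle in its fractional form, ruling out vanishing via the subcritical embedding together with the constraint, and ruling out dichotomy via the strict subadditivity of the infimum, which follows from the scaling $w\mapsto tw$. The resulting minimizer solves the Euler--Lagrange equation up to a multiplicative scaling; replacing $u$ by $|u|$ does not increase the functional, so the minimizer is $\geq 0$, and the strong maximum principle for $(-\Delta)^{\alpha}$ forces $w>0$. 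A Caffarelli--Silvestre extension combined with Moser iteration and Schauder-type estimates gives $w\in H^{2\alpha}\cap C^{\infty}$ and the a priori bounds, while the Pohozaev and Nehari identities relating $\|(-\Delta)^{\alpha/2}w\|_{2}^{2}$, $\|w\|_{2}^{2}$ and $\|w\|_{2p+2}^{2p+2}$ follow by testing the extended equation with $x\cdot\nabla w$ and with $w$ itself.

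For the qualitative properties, radial symmetry and monotonicity of $w$ would come either from the fractional P\'olya--Szeg\H{o} inequality --- Schwarz symmetrization does not increase the extension energy while preserving $\|u\|_{2}$ and $\|u\|_{2p+2}$, so the minimizer may be taken radially decreasing --- or from the method of moving planes for $(-\Delta)^{\alpha}$ applied to the extension. For the sharp decay $w(x)\sim C\,|x|^{-(N+2\alpha)}$ I would write $w=\mathcal{K}\ast w^{2p+1}$ with $\mathcal{K}$ the kernel of $((-\Delta)^{\alpha}+1)^{-1}$, invoke the algebraic decay $\mathcal{K}(x)\asymp|x|^{-(N+2\alpha)}$, and bootstrap: once $w(x)\to 0$, the source $w^{2p+1}$ decays strictly faster than $w$, which pins the decay of $w$ to the rate of the kernel.

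The principal obstacle --- and the central original content of \cite{FLS} --- is the nondegeneracy of the linearized operator $L_{+}:=(-\Delta)^{\alpha}+1-(2p+1)w^{2p}$, namely that $\ker L_{+}=\mathrm{span}\{\partial_{x_{1}}w,\ldots,\partial_{x_{N}}w\}$. The plan is to split $L^{2}(\mathbb{R}^{N})$ into spherical-harmonic sectors: the sectors of angular momentum $\ell\geq 2$ should have trivial kernel by a Perron--Frobenius/positivity comparison placing the bottom of each such sector strictly above $0$; the $\ell=1$ sector contributes exactly the translation modes $\partial_{x_{i}}w$; and the radial sector is the delicate one, where, in the absence of an ODE reduction, I would pass to the Caffarelli--Silvestre extension, turn $L_{+}\varphi=0$ into a degenerate-elliptic equation in $(r,y)\in\mathbb{R}_{+}\times\mathbb{R}_{+}$, and combine a Sturm-type oscillation and monotonicity argument --- exploiting $\partial_{r}w<0$ and a Hamiltonian identity for the extension --- with the minimality of $w$ to exclude a nontrivial radial kernel element. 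I expect this radial-sector analysis to be by far the hardest step. Granting nondegeneracy, uniqueness up to translation follows by a global continuation argument: the implicit function theorem makes the ground-state branch locally unique as $p$ (or $\alpha$) varies, a priori bounds prevent the branch from degenerating or escaping to infinity, and anchoring it at a case where uniqueness is classically available --- for instance the limit $\alpha\to 1$ and Kwong's theorem --- propagates uniqueness through the whole admissible range of $p$.
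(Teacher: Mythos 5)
This proposition is not proved in the paper at all: it is stated as a known fact and attributed in full to Frank, Lenzmann and Silvestre \cite{FLS}, so there is no internal proof to measure your attempt against. What you have written is a reasonable executive summary of the \cite{FLS} programme, and it correctly identifies the load-bearing parts: existence and the qualitative properties are the soft half (constrained minimization, fractional Lions concentration--compactness, P\'olya--Szeg\H{o} or moving planes for symmetry, the kernel of $((-\Delta)^\alpha+1)^{-1}$ for the $|x|^{-(N+2\alpha)}$ decay), while the nondegeneracy of $L_+=(-\Delta)^\alpha+1-(2p+1)w^{2p}$ and the continuation-from-$\alpha=1$ uniqueness argument are the genuine novelties. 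Your sector-by-sector split of $L_+$ into spherical harmonics, with $\ell\ge 2$ killed by positivity comparison, $\ell=1$ giving exactly the translations, and the radial sector as the crux, is indeed the shape of the \cite{FLS} argument, and the anchoring of the continuation at Kwong's theorem for the local Laplacian is correct.

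That said, you should be explicit that you have supplied a roadmap, not a proof. The radial-sector analysis in \cite{FLS} is not just ``a Sturm-type oscillation and monotonicity argument'': there is no ODE to do Sturm theory on, and the substitute --- a careful count of sign changes of eigenfunctions of the extended operator, a key ``linear bound'' on the nonlinear term, and a Perron--Frobenius monotonicity structure in the extension variable --- is several pages of delicate work that your sketch gestures at but does not reproduce. Likewise, the continuation argument needs a priori compactness of the ground-state family uniformly in $\alpha$ (or $p$), including uniform decay and uniform nondegeneracy bounds, which you mention as ``a priori bounds prevent the branch from degenerating'' without indicating how to obtain them. These are precisely the steps that make Proposition \ref{P1.1} a cited theorem rather than a routine lemma, so in the context of this paper the honest statement is that it is black-boxed from \cite{FLS}, and any self-contained proof would have to fill the gaps you have flagged.
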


Then the following hold:\newline
$(i)$ (Uniqueness) The ground state solution $U\in H^{\alpha }(\mathbb{R}%
^{N})$ for Eq. $(\widehat{E}_{l})$ is unique.\newline
$(ii)$ (Symmetry, regularity and decay) $U(x)$ is radial, positive and
strictly decreasing in $|x|$. Moreover, the function $U(x)$ belongs to $%
H^{2\alpha +1}(\mathbb{R}^{N})\cap C^{\infty }(\mathbb{R}^{N})$ and
satisfies
\begin{equation*}
\frac{C_{1}}{1+|x|^{N+2\alpha }}\leq U(x)\leq \frac{C_{2}}{1+|x|^{N+2\alpha }%
},~\forall ~x\in \mathbb{R}^{N},
\end{equation*}%
with some constants $C_{2}\geq C_{1}>0$.\newline
$(iii)$ (Non-degeneracy) The linearized operator $L_{0}=(-\triangle
)^{\alpha }+1-(2p+1)U^{2p}$ is non-degenerate, i.e., its kernel is given by
\begin{equation*}
kerL_{0}=span\{\partial _{x_{1}}U,\cdots ,\partial _{x_{N}}U\}.
\end{equation*}

Very recently, many researchers have paid attention to the following Schr%
\"{o}dinger system with constant potentials and $\beta $ is a real constant,
i.e.
\begin{equation}
\left\{
\begin{array}{ll}
-\varepsilon ^{2}\Delta u+u=\left( |u|^{2p}+\beta |u|^{p-1}|v|^{p+1}\right)
u, & \text{in }\mathbb{R}^{N}, \\
-\varepsilon ^{2}\Delta v+\omega ^{2}v=\left( |v|^{2p}+\beta
|v|^{p-1}|u|^{p+1}\right) v, & \text{in }\mathbb{R}^{N},%
\end{array}%
\right.  \label{1.3}
\end{equation}%
which arises as a model for propagation of polarized laser beams in
briefringent Kerr medium in nonlinear optics (see \cite{Ber,Fib,Lak}). With
the help of the critical point theory and the variational methods, there
have been lots of results about the existence and multiplicity of nontrivial
solutions for system (\ref{1.3}), see \cite{Cip,Fig1,Mai1} and the
references therein. In the case of $p=1,N=2,3,\beta \in \mathbb{R}$ and the
potentials may be not constants, we may get the following system:
\begin{equation}
\left\{
\begin{array}{ll}
-\varepsilon ^{2}\Delta u+V_{1}\left( x\right) u=\mu _{1}u^{3}+\beta uv^{2},
& \text{in }\mathbb{R}^{N}, \\
-\varepsilon ^{2}\Delta v+V_{2}\left( x\right) v=\mu _{2}v^{3}+\beta vu^{2},
& \text{in }\mathbb{R}^{N},%
\end{array}%
\right.  \label{1.4}
\end{equation}%
which appears in the Hartree--Fock theory for a double condensate, i.e., a
binary mixture of Bose--Einstein condensates in two different hyperfine
states $|1\rangle $ and $|2\rangle $ (cf. \cite{Esr, Hal, Tim}). Physically,
$u$ and $v$ are the corresponding condensate amplitudes, $\varepsilon ^{2}=%
\frac{h^{2}}{2m}$ and $\mu _{j}=-(N_{j}-1)U_{jj}$, where $h$ is Planck
constant, $m$ is the atom mass, $N_{j}$ is a fixed number of atoms in the
hyperfine state $|j\rangle $. Recently, various results on the existence and
concentration of solutions for system (\ref{1.4}) have been obtained, we
refer the readers to \cite{I, LW,lw1, Lin1,Pen1} and the references therein.
Peng and Li \cite{Pen1} obtained the existence of multi--spike vector
solutions for system system (\ref{1.4}) with $\beta \neq 0$. Moreover, the
attractive phenomenon for $\beta <0$ and the repulsive phenomenon for $\beta
>0$ are also explored by the authors. Lin and Wu \cite{Lin1}, use energy
estimates and category theory to prove the nonuniqueness theorem, provided $%
\beta <0$. Lin and Wei \cite{LW} proved that there exists $\beta _{0}\in (0,%
\sqrt{\mu _{1}\mu _{2}})$ such that system (\ref{1.4}) possesses a least
energy solution whenever $\beta \in (-\infty ,\beta _{0})$. When the
potentials $V_{1}(x)$ and $V_{2}(x)$ satisfy:\newline
$(\overline{V})$ $0<\inf\limits_{x\in \mathbb{R}^{N}}V_{l}(x)<\lim%
\limits_{|x|\rightarrow \infty }V_{l}(x)\leq \infty \text{ for }l=1,2.$%
\newline
Lin and Wei \cite{lw1} studied the minimization of the functional $%
E_{\varepsilon }$ for system (\ref{1.4}) on $\mathbb{R}^{N},~N=2,3$. When $%
\beta <0$ and $\varepsilon >0$ sufficiently small, problem (\ref{1.4}) has a
least energy solution $(u_{\varepsilon ,1},u_{\varepsilon ,2}),$ such that%
\begin{equation*}
\varepsilon ^{-N}E_{\varepsilon }(u_{\varepsilon ,1},u_{\varepsilon
,2})\rightarrow \alpha _{\lambda _{1},\mu _{1}}+\alpha _{\lambda _{2},\mu
_{2}},
\end{equation*}%
as $\varepsilon $ goes to zero (up to a subsequence), where $\alpha
_{\lambda _{l},\mu _{l}}$ is defined by
\begin{equation*}
\alpha _{\lambda _{l},\mu _{l}}=\inf \left\{ I_{\lambda _{l},\mu _{l}}\left(
u\right) \ |\ u\in \mathbf{M}_{\lambda _{l},\mu _{l}}\right\} ,
\end{equation*}%
\begin{eqnarray*}
I_{\lambda _{l},\mu _{l}}\left( u\right) &=&\frac{1}{2}\int_{\mathbb{R}%
^{N}}\left( \left\vert \nabla u\right\vert ^{2}+\lambda _{l}u^{2}\right)
\mathrm{d}x-\frac{\mu _{l}}{4}\int_{\mathbb{R}^{N}}u^{4}\mathrm{d}x,\quad
\forall ~u\in H^{1}\left( \mathbb{R}^{N}\right) , \\
\mathbf{M}_{\lambda _{l},\mu _{l}} &=&\left\{ u\in H^{1}\left( \mathbb{R}%
^{N}\right) \backslash \left\{ 0\right\} \ |\ \left\langle I_{\lambda
_{l},\mu _{l}}^{\prime }\left( u\right) ,u\right\rangle =0\right\} ,
\end{eqnarray*}%
for $l=1,2$. Furthermore, if $V_{1}(x)$ and $V_{2}(x)$ are of $C^{2}$
functions with nondegenerate minimum points at $z_{1}$ and $z_{2}$,
respectively, then $u_{\varepsilon ,l}$ has only one maximum point $%
z_{l}^{\varepsilon }$ that satisfies
\begin{eqnarray*}
V_{l}(z_{l}^{\varepsilon }) &\rightarrow &\inf\limits_{x\in \mathbb{R}%
^{N}}\,V_{l}(x),\quad l=1,2, \\
U_{\epsilon ,l}(y) &=&u_{\varepsilon ,l}(z_{l}^{\epsilon }+\epsilon
y)\rightarrow \omega _{\lambda _{l},\mu _{l}}(y),\quad l=1,2, \\
\frac{|z_{1}^{\varepsilon }-z_{2}^{\varepsilon }|}{\varepsilon }
&\rightarrow &+\infty ,
\end{eqnarray*}%
as $\varepsilon \rightarrow 0$ (up to a subsequence), where $\omega
_{\lambda _{l},\mu _{l}}\left( x\right) =\omega _{\lambda _{l},\mu
_{l}}\left( \left\vert x\right\vert \right) $ is the energy minimizer of the
minimum $\alpha _{\lambda _{l},\mu _{l}}>0$ (cf. \cite{Wi}) and is the
unique solution (cf.~\cite{K}) of
\begin{equation}
-\Delta u+\lambda _{l}u=\mu _{l}u^{3}\text{ in }\mathbb{R}^{N},\quad u>0%
\text{ in }\mathbb{R}^{N}.  \tag{$E_{\lambda _{l},\mu _{l}}$}
\end{equation}%
Consequently, the $u_{\varepsilon ,l}$'s satisfy
\begin{equation}
\varepsilon ^{-N}\int_{B^{N}\left( z_{l}^{\varepsilon };\varepsilon R\right)
}u_{\varepsilon ,l}^{4}\geq d>0,  \label{1.9}
\end{equation}%
as $\varepsilon \rightarrow 0$ (up to a subsequence), where $d$ and $R$ are
positive constants independent of $\varepsilon $, $B^{N}\left(
z_{l}^{\varepsilon };\varepsilon R\right) $ is an $N$ dimensional ball with
a radius $\varepsilon R$ and a center at $z_{l}^{\varepsilon }$. Hereafter,
the point $z_{l}^{\varepsilon }$ is defined as a concentration point of $%
u_{\varepsilon ,l}$ if and only if~(\ref{1.9}) holds.

In the nonlocal case, that is, when $\alpha \in (0,1)$, even in the power
type nonlinearities case, there are very few results for the fractional
Laplacian systems. In \cite{Guo1}, Guo and He studied the following
fractional Schr\"{o}dinger system with nonconstant potentials:
\begin{equation}
\left\{
\begin{array}{l}
\varepsilon ^{2\alpha }(-\Delta )^{\alpha }u+P_{1}(x)u=\left( |u|^{2p}+\beta
|u|^{p-1}|v|^{p+1}\right) u,\quad \text{in}~\mathbb{R}^{N}, \\
\varepsilon ^{2\alpha }(-\Delta )^{\alpha }v+P_{2}(x)v=\left( |v|^{2p}+\beta
|v|^{p-1}|u|^{p+1}\right) v,\quad ~\text{in}~\mathbb{R}^{N},%
\end{array}%
\right.  \label{1.10}
\end{equation}%
where $\alpha \in (0,1)$, $0<p<\frac{2\alpha }{N-2\alpha }$, $\varepsilon >0$
is a small parameter and coupling function $\beta \equiv b>0$. Under some
appropriate hypotheses on the potentials $P_{1}(x)$ and $P_{2}(x)$, the
authors obtained the existence of nontrivial nonnegative solutions for
system $(\ref{1.10})$ which concentrate around local minima of the
potentials. When $\varepsilon =1$, $P_{1}(x)=1$ and $P_{2}(x)=\omega
^{2\alpha },~\omega >0$, Guo and He \cite{Guo} obtained the existence of a
least energy solution for system $(\ref{1.10})$ on the Nehari manifold.
Furthermore, the existence of least energy positive solution with both
nontrivial components was also established. In \cite{Lu}, L\"{u} and Peng
studied system $(\ref{1.10})$ with $P_{1}(x)=P_{2}(x)=\varepsilon =1,~\left(
|u|^{2p}+\beta |u|^{p-1}|v|^{p+1}\right) u$ and $\left( |v|^{2p}+\beta
|v|^{p-1}|u|^{p+1}\right) v$ being replaced by $f(u)+\beta v$ and $%
g(v)+\beta u$, respectively. Under very weak assumptions on the nonlinear
terms $f$ and $g$, they obtained the existence of positive vector solutions
and vector ground state solutions for system $(\ref{1.10})$. Moreover, the
asymptotic behavior of the solutions as $\beta \rightarrow 0$ was also
analyzed by them. For the other related results about the fractional
Laplacian system, we refer the readers to \cite{Che1, Cho, Dip} and the
references therein.

Motivated by \cite{Guo1, LW, lw1, Lin1, Lu}, it is very natural for us to
pose some questions, in particular, such as:

$(I)$ As pointed out in \cite{Guo1,Lu}, when the coupled nonlinear terms are
replaced by $\beta \left( x\right) |u|^{q-1}u|v|^{q+1}$ and $\beta \left(
x\right) |u|^{q+1}|v|^{q-1}v$ for the coupling function $\beta <0$ in $%
\mathbb{R}^{N}$ and $0<q\leq p,$ is the existence of positive solutions for
system $(\ref{1.10})$ which concentrate around local minima of the
potentials still true?

$\left( II\right) $ Can the relationship of the minimum points of the
potentials $V_{1}(x)$ and $V_{2}(x)$ affects the number of positive
solutions for system $(P_{\varepsilon })?$

$(III)$ Under our assumptions $(D_{1})$ and $(D_{2})$, can one prove that
the positive solution of problem $(P_{\varepsilon })$ is a least energy
solution? If not, can one give some appropriate conditions on the potentials
to assure that the positive solution is a least energy solution of problem $%
(P_{\varepsilon })?$

In the present paper, by using the Nehari manifold technique, the energy
estimates and the Lusternik--Schnirelmann theory of critical points, we
study how the relationship of the minimum points of the potentials $V_{1}(x)$
and $V_{2}(x)$ affects the number of positive solutions for system $%
(P_{\varepsilon })$, provided $\beta $ is a nonpositive, bounded and
uniformly continuous function on $\mathbb{R}^{N}$, and will give answers to
Questions $(I)-(III)$. Moreover, whether the positive solution for problem $%
\left( P_{\varepsilon }\right) $ is a least energy solution depends on the
relationship between $\liminf\limits_{|x|\rightarrow \infty }V_{l}(x)$,$%
\lim\limits_{|x|\rightarrow \infty }V_{l}(x)$ and $\lambda _{l},~l=1,2.$

Before we describe the main results, we need some known techniques. The
energy functional $J_{\varepsilon }$ we consider that corresponds to problem
$\left( P_{\varepsilon }\right) $ is given by, for each $\left( u,v\right)
\in H=H^{\alpha }\left( \mathbb{R}^{N}\right) \times H^{\alpha }\left(
\mathbb{R}^{N}\right) ,$%
\begin{eqnarray}
J_{\varepsilon }\left( u,v\right) &=&\frac{1}{2}\left\Vert \left( u,v\right)
\right\Vert _{H}^{2}-\frac{1}{2p+2}\left( \int_{\mathbb{R}^{N}}\mu
_{1}\left\vert u^{+}\right\vert ^{2p+2}\mathrm{d}x+\int_{\mathbb{R}^{N}}\mu
_{2}\left\vert v^{+}\right\vert ^{2p+2}\mathrm{d}x\right)  \notag \\
&&-\frac{1}{q+1}\int_{\mathbb{R}^{N}}\beta (x)\left\vert u^{+}\right\vert
^{q+1}\left\vert v^{+}\right\vert ^{q+1}\mathrm{d}x,  \label{1.14}
\end{eqnarray}%
where with $u^{+}=\max\{u,0\}$, $v^{+}=\max\{v,0\}$ and the norm $\Vert
\cdot \Vert _{H}, $ given by
\begin{eqnarray*}
\left\Vert \left( u,v\right) \right\Vert _{H}^{2} &=&\left\Vert u\right\Vert
_{V_{1}}^{2}+\left\Vert v\right\Vert _{V_{2}}^{2} \\
&=&\int_{\mathbb{R}^{N}}\left( \varepsilon ^{2\alpha }|(-\triangle )^{\frac{%
\alpha }{2}}u|^{2}+V_{1}(x)u^{2}\right) \mathrm{d}x+\int_{\mathbb{R}%
^{N}}\left( \varepsilon ^{2\alpha }|(-\triangle )^{\frac{\alpha }{2}%
}v|^{2}+V_{2}(x)v^{2}\right) \mathrm{d}x.
\end{eqnarray*}%
Note that the fractional Sobolev space $H^{\alpha }\left( \mathbb{R}%
^{N}\right) $ is given by:
\begin{equation*}
H^{\alpha }\left( \mathbb{R}^{N}\right) =\left\{ u\in L^{2}\left( \mathbb{R}%
^{N}\right) :\int_{\mathbb{R}^{N}}\int_{\mathbb{R}^{N}}\frac{|u(x)-u(y)|^{2}%
}{|x-y|^{N+2\alpha }}\mathrm{d}x\mathrm{d}y<+\infty \right\} .
\end{equation*}%
From Proposition 3.4, 3.6 of \cite{DPV}, we know that
\begin{equation*}
\int_{\mathbb{R}^{N}}\int_{\mathbb{R}^{N}}\frac{|u(x)-u(y)|^{2}}{%
|x-y|^{N+2\alpha }}\mathrm{d}x\mathrm{d}y=2C(N,\alpha )\int_{\mathbb{R}%
^{N}}|(-\triangle )^{\frac{\alpha }{2}}u|^{2}\mathrm{d}x,
\end{equation*}%
where
\begin{equation*}
C(N,\alpha )=\left( \int_{\mathbb{R}^{N}}\frac{1-cos\zeta _{1}}{|\zeta
|^{N+2\alpha }}\mathrm{d}\zeta \right) ,~~\zeta =\left( \zeta _{1},\zeta
_{2},\cdots ,\zeta _{N}\right) .
\end{equation*}%
It is well known that the energy functional $J_{\varepsilon }$ is of class $%
C^{1}$ in $H$ and the nonnegative solutions of problem $\left(
P_{\varepsilon }\right) $ are the critical points of the energy functional $%
J_{\varepsilon }.$ As the energy functional $J_{\varepsilon }$ is not
bounded below on $H$ and to prove the existence of nontrivial critical
points of $J_{\varepsilon }$, it is useful to consider the functional on the
Nehari manifold%
\begin{equation*}
\mathbf{N}_{\varepsilon }=\left\{ (u,v)\in \mathbf{T}:%
\begin{array}{l}
\left\Vert u\right\Vert _{V_{1}}^{2}=\mu _{1}\int_{\mathbb{R}^{N}}\left\vert
u^{+}\right\vert ^{2p+2}\mathrm{d}x+\int_{\mathbb{R}^{N}}\beta(x)\left\vert
u^{+}\right\vert ^{q+1}\left\vert v^{+}\right\vert ^{q+1}\mathrm{d}x, \\
\left\Vert v\right\Vert _{V_{2}}^{2}=\mu _{2}\int_{\mathbb{R}^{N}}\left\vert
v^{+}\right\vert ^{2p+2}\mathrm{d}x+\int_{\mathbb{R}^{N}}\beta(x)\left\vert
u^{+}\right\vert ^{q+1}\left\vert v^{+}\right\vert ^{q+1}\mathrm{d}x%
\end{array}%
\right\} ,
\end{equation*}%
where%
\begin{equation*}
\mathbf{T}=\left\{ \left( u,v\right) \in H:u\not\equiv 0\text{ and }%
v\not\equiv 0\right\} .
\end{equation*}%
Furthermore, we consider the minimization problem:
\begin{equation}
c_{\varepsilon }=\inf \left\{ J_{\varepsilon }\left( u,v\right) :\left(
u,v\right) \in \mathbf{N}_{\varepsilon }\right\} ,  \label{1.15}
\end{equation}%
we call the nontrivial critical point $(u,v)\in H$ of $J_{\varepsilon }$ is
a least energy solution of problem $(P_{\varepsilon })$ if $J_{\varepsilon
}(u,v)=c_{\varepsilon }$ and $(u,v)\in \mathbf{N}_{\varepsilon }$. Note that
if there exists a nontrivial solution $(u,v)\in \mathbf{N}_{\varepsilon }$
of problem $(P_{\varepsilon })$ such that $J_{\varepsilon
}(u,v)>c_{\varepsilon }$, then we call the solution $(u,v)$ is a higher
energy solution of problem $(P_{\varepsilon })$.

Now we state our main results.

\begin{theorem}
\label{t1.1}$\left( i\right) $ Assume that the conditions $\left(
D_{1}\right) -\left( D_{4}\right) $ hold. Then there exists $\varepsilon
_{0}>0$ such that for every $\varepsilon \in \left( 0,\varepsilon
_{0}\right) ,$ problem $(P_{\varepsilon })$ has at least $k\times \ell +m$
positive solutions.\newline
$\left( ii\right) $ Let $(\widehat{u}_{\varepsilon ,i},\widehat{v}%
_{\varepsilon ,j})$ be a positive
solution of problem $(P_{\varepsilon })$ as in part $\left( i\right) .$ Then there exist $z_{1,i}$ and $z_{2,j}$ which are isolated
gobal minimum points of potentials $V_{1}\left( x\right) $ and $V_{2}\left(
x\right) ,$ respectively such that $(\widehat{u}_{\varepsilon ,i},\widehat{v}%
_{\varepsilon ,j})$ concentrating at $(z_{1,i},z_{2,j})$ as $\varepsilon
\rightarrow 0.$
\end{theorem}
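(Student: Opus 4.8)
The plan is to combine precise energy estimates with the Lusternik--Schnirelmann category theory, exploiting the sign condition $\beta\le0$ (which makes the two equations decouple in the limit) together with the gap $\beta\le-c_{0}$ at the common minima (which generates extra topology there).

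\emph{Step 1: the limiting problem and the energy estimate.} For $l=1,2$ let $I_{\lambda_{l},\mu_{l}}$ be the scalar functional associated with $(\widehat{E}_{l})$ on $H^{\alpha}(\mathbb{R}^{N})$, and let $S_{\lambda_{l},\mu_{l}}>0$ be its Nehari (ground--state) level, attained at the unique positive radial nondegenerate ground state $U_{l}$ of Proposition~\ref{P1.1}. The first task is to prove
\begin{equation*}
\varepsilon^{-N}c_{\varepsilon}\ \longrightarrow\ S_{\lambda_{1},\mu_{1}}+S_{\lambda_{2},\mu_{2}}\qquad\text{as }\varepsilon\to0.
\end{equation*}
For the upper bound one inserts into $J_{\varepsilon}$, after the (two--parameter) Nehari projection, a glued pair $\big(\chi_{\varepsilon}\,U_{1}((\,\cdot\,-z_{1,i})/\varepsilon),\ \widetilde{\chi}_{\varepsilon}\,U_{2}((\,\cdot\,-z_{2,j})/\varepsilon)\big)$ with smooth cut--offs, the two bumps being placed either at distinct minima or, at a common minimum, split apart at a scale $\gg\varepsilon$; then the coupling integral is negligible and the continuity of $V_{l}$ at its minimum gives $\varepsilon^{-N}J_{\varepsilon}\to S_{\lambda_{1},\mu_{1}}+S_{\lambda_{2},\mu_{2}}$. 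For the lower bound, since $-\tfrac{1}{q+1}\int_{\mathbb{R}^{N}}\beta|u^{+}|^{q+1}|v^{+}|^{q+1}\ge0$ one has $J_{\varepsilon}(u,v)\ge I_{\varepsilon,1}(u^{+})+I_{\varepsilon,2}(v^{+})$ on $\mathbf{N}_{\varepsilon}$, so the estimate reduces to the known single--equation fact $\varepsilon^{-N}\inf I_{\varepsilon,l}\to S_{\lambda_{l},\mu_{l}}$ (using $V_{l}\ge\lambda_{l}$ and a concentration--compactness argument in $H^{\alpha}(\mathbb{R}^{N})$ to rule out vanishing and dichotomy).

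\emph{Step 2: multiplicity via Lusternik--Schnirelmann category, and positivity.} Let $Z_{l}=\{z_{l,1},\ldots\}$ be the set of minima of $V_{l}$ and, for small $\eta,\varepsilon>0$, set $\mathbf{N}_{\varepsilon}^{\eta}=\{(u,v)\in\mathbf{N}_{\varepsilon}:\varepsilon^{-N}J_{\varepsilon}(u,v)\le S_{\lambda_{1},\mu_{1}}+S_{\lambda_{2},\mu_{2}}+\eta\}$. Using Step~1 and barycentre--type maps $u\mapsto b_{1}(u)$, $v\mapsto b_{2}(v)$ built from the weighted densities of $u^{+}$ and $v^{+}$, I would show that $\mathbf{N}_{\varepsilon}^{\eta}$ deformation--retracts onto a disjoint union, indexed by the pairs $(z_{1,i},z_{2,j})$, of local pieces: a single point when $z_{1,i}\ne z_{2,j}$, but --- because $\beta\le-c_{0}$ forbids the two concentration points to coalesce (their separation is forced to $\gg\varepsilon$ by balancing the cost of leaving the isolated minimum against the repulsion $\int\beta U_{1}^{q+1}U_{2}^{q+1}$) --- a copy of the relative--direction sphere $S^{N-1}$ when $z_{1,i}=z_{2,j}$ ($i\le m$); the glued test pairs of Step~1 provide the sections. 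Since the Lusternik--Schnirelmann category of a point is $1$ and $\mathrm{cat}(S^{N-1})=2$, this yields $\mathrm{cat}(\mathbf{N}_{\varepsilon}^{\eta})\ge(k\ell-m)+2m=k\ell+m$. Combined with a Palais--Smale condition for $J_{\varepsilon}|_{\mathbf{N}_{\varepsilon}}$ below the level $\varepsilon^{N}(S_{\lambda_{1},\mu_{1}}+S_{\lambda_{2},\mu_{2}}+\eta)$ --- a splitting lemma adapted to the bounded non--constant potentials and to $\beta\le0$ --- the abstract Lusternik--Schnirelmann theorem produces at least $k\ell+m$ critical points of $J_{\varepsilon}$ in $\mathbf{N}_{\varepsilon}^{\eta}$. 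Finally, testing these equations against $(u^{-},0)$ and $(0,v^{-})$ and using the pointwise inequality behind $\langle(-\Delta)^{\alpha/2}u,(-\Delta)^{\alpha/2}u^{-}\rangle\ge\|(-\Delta)^{\alpha/2}u^{-}\|_{2}^{2}$ gives $\|u^{-}\|_{V_{1}}^{2}\le0$ and $\|v^{-}\|_{V_{2}}^{2}\le0$, so $u,v\ge0$; since $u,v\not\equiv0$ on $\mathbf{N}_{\varepsilon}$, the strong maximum principle / Harnack inequality for $(-\Delta)^{\alpha}$ upgrades this to $u,v>0$, which proves part $(i)$.

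\emph{Step 3: concentration, part $(ii)$.} Fix one of the critical points $(\widehat{u}_{\varepsilon,i},\widehat{v}_{\varepsilon,j})$ from Step~2 and let $p_{\varepsilon}^{1},p_{\varepsilon}^{2}$ be maximum points of its two components. From the known energy level and a profile decomposition in $H^{\alpha}(\mathbb{R}^{N})$, the rescalings $\widehat{u}_{\varepsilon,i}(p_{\varepsilon}^{1}+\varepsilon\,\cdot\,)$ and $\widehat{v}_{\varepsilon,j}(p_{\varepsilon}^{2}+\varepsilon\,\cdot\,)$ converge in $H^{\alpha}$ (up to subsequences) to translates of the ground states $U_{1},U_{2}$ of $(\widehat{E}_{1}),(\widehat{E}_{2})$ while the remaining mass vanishes; in particular $\varepsilon^{-N}\int_{B(p_{\varepsilon}^{1};\varepsilon R)}|\widehat{u}_{\varepsilon,i}|^{2p+2}\ge d>0$, and likewise for $\widehat{v}_{\varepsilon,j}$, with $d,R>0$ independent of $\varepsilon$. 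Moreover $V_{1}(p_{\varepsilon}^{1})\to\lambda_{1}$ and $V_{2}(p_{\varepsilon}^{2})\to\lambda_{2}$, since otherwise the energy would exceed $S_{\lambda_{1},\mu_{1}}+S_{\lambda_{2},\mu_{2}}$; hence, by the isolation of the minima in $(D_{2})$ together with the localisation (barycentre constraint) built into the construction of $(\widehat{u}_{\varepsilon,i},\widehat{v}_{\varepsilon,j})$, one obtains $p_{\varepsilon}^{1}\to z_{1,i}$ and $p_{\varepsilon}^{2}\to z_{2,j}$, i.e. $(\widehat{u}_{\varepsilon,i},\widehat{v}_{\varepsilon,j})$ concentrates at $(z_{1,i},z_{2,j})$.

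\emph{Main obstacle.} The crux is the uniform--in--$\varepsilon$ compactness: a global splitting / profile--decomposition lemma in $H^{\alpha}(\mathbb{R}^{N})$ that simultaneously accommodates the bounded non--constant potentials $V_{1},V_{2}$ and the nonpositive coupling $\beta$ --- this is what drives both the Palais--Smale condition of Step~2 and the concentration of Step~3 --- together with making the category reduction rigorous so that exactly $k\ell+m$ \emph{distinct} critical points are obtained, the extra $m$ coming precisely from $\mathrm{cat}(S^{N-1})=2$ at the $m$ coincident minima.
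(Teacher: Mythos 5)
Your proposal reproduces the paper's overall strategy faithfully: establish $\varepsilon^{-N}c_{\varepsilon}\to\widehat\alpha_{1}+\widehat\alpha_{2}$ (the paper's Lemmas~\ref{L2.2}--\ref{L2.4}), localize via barycentre-type maps to the pairs $(z_{1,i},z_{2,j})$, find one solution per pair and two solutions when the minima coincide because of an $S^{N-1}$ of ``relative directions,'' and rely on a splitting-type (PS) analysis plus $\beta\le -c_{0}$ to force the two bumps apart. The counting $k\ell+m$ and the role of each hypothesis are correct.

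One concrete step, however, would not survive scrutiny as written. You propose to prove that the sublevel set $\mathbf N_{\varepsilon}^{\eta}$ \emph{deformation-retracts} onto a disjoint union of $(k\ell-m)$ points and $m$ copies of $S^{N-1}$ and then read off $\mathrm{cat}=k\ell+m$. Establishing such a retraction is essentially intractable (one would need fine control of the full topology of the sublevel set), and it is also far more than is needed. The paper's route is strictly weaker and is what actually goes through: it partitions the analysis across the barycentre-constrained pieces $N_{i,j}(\varepsilon)=\{(u,v)\in\mathbf N_{\varepsilon}:\Phi(u)\in C_{s}(z_{1,i}),\ \Phi(v)\in C_{s}(z_{2,j})\}$, uses Lemmas~\ref{L2.4}--\ref{L2.5} to show the low-energy sublevel stays strictly inside $N_{i,j}$ (hence (PS) holds there by Proposition~\ref{P3.2}), obtains one solution on each $N_{i,j}$ by Ekeland minimization, and for $i=j\le m$ proves $\mathrm{cat}\ge 2$ not by a retract but by sandwiching: it builds $F_{\varepsilon}^{(i,j)}:S^{N-1}\to[\varepsilon^{-N}J_{\varepsilon}\le \widehat\alpha_{1}+\widehat\alpha_{2}+\delta_{\varepsilon}]_{i,j}$ from two-bump test pairs and $G_{\varepsilon}^{(i,j)}(u,v)=-h(u,v)/|h(u,v)|$ with $h=\Phi(u)-\Phi(v)$, and shows $G_{\varepsilon}^{(i,j)}\circ F_{\varepsilon}^{(i,j)}\simeq\mathrm{id}_{S^{N-1}}$. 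The nondegeneracy $|h(u,v)|>0$ on the sublevel set (Lemma~\ref{L4.2}) is exactly where $\beta\le -c_{0}$ near the common minimum enters, and it is the substitute for your ``forced separation'' heuristic; without it the map $G_{\varepsilon}^{(i,j)}$ is undefined. You should also add the natural-constraint step (the paper's Lemma~\ref{L4.1}): critical points of $J_{\varepsilon}$ restricted to $N_{i,j}(\varepsilon)$ must be shown to be free critical points of $J_{\varepsilon}$ in $H$, since $N_{i,j}(\varepsilon)$ is cut out by the barycentre constraints in addition to the two Nehari constraints, and this is not automatic. With the deformation-retract claim replaced by the sandwich argument and the natural-constraint lemma added, your Steps~1--3 line up with the paper's proof.
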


\begin{theorem}
\label{t1.2}$\left( i\right) $ If $\liminf\limits_{|x|\rightarrow \infty
}V_{l}(x)\equiv V_{l,\infty }>\lambda _{l}$ for all $l=1,2,$ then there
exists $0<\varepsilon _{\ast \ast }\leq \varepsilon _{0}$ such that for
every $\varepsilon <\varepsilon _{\ast \ast },$ we can find at least one
least energy solution in the these solutions of Theorem \ref{t1.1} $\left(
i\right) $.\newline
$\left( ii\right) $ If $\lim\limits_{|x|\rightarrow \infty }V_{l}(x)=\lambda
_{l}$ for all $l=1,2,$ then all of the solutions of Theorem \ref{t1.1} $%
\left( i\right) $ are higher energy.
\end{theorem}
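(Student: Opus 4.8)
Both parts are organised around a single reformulation of $c_{\varepsilon }$. For $l=1,2$ write $\widetilde{J}_{l,\varepsilon }(w)=\frac 12\Vert w\Vert _{V_{l}}^{2}-\frac{\mu _{l}}{2p+2}\int _{\mathbb{R}^{N}}|w^{+}|^{2p+2}\,\mathrm{d}x$ for the energy of the scalar equation $\varepsilon ^{2\alpha }(-\Delta )^{\alpha }w+V_{l}(x)w=\mu _{l}|w|^{2p}w$, with Nehari manifold $\mathbf{M}_{l,\varepsilon }=\{w\in H^{\alpha }(\mathbb{R}^{N})\setminus \{0\}:\Vert w\Vert _{V_{l}}^{2}=\mu _{l}\int _{\mathbb{R}^{N}}|w^{+}|^{2p+2}\,\mathrm{d}x\}$ and least energy $b_{l,\varepsilon }=\inf _{\mathbf{M}_{l,\varepsilon }}\widetilde{J}_{l,\varepsilon }$. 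Scaling $w\mapsto w(\cdot /\varepsilon )$ together with the ground state of $(\widehat{E}_{l})$ in Proposition \ref{P1.1} gives $b_{l,\varepsilon }=\varepsilon ^{N}m_{\lambda _{l},\mu _{l}}+o(\varepsilon ^{N})$ as $\varepsilon \to 0$, where $m_{\theta ,\mu _{l}}>0$ is the least energy of $(\widehat{E}_{l})$ with the mass $\lambda _{l}$ replaced by $\theta >0$ and $\theta \mapsto m_{\theta ,\mu _{l}}$ is strictly increasing. Since $\beta \leq 0$, for each $(u,v)\in \mathbf{T}$ with $u^{+},v^{+}\not\equiv 0$ the map $(s,t)\mapsto J_{\varepsilon }(su,tv)$ on $(0,\infty )^{2}$ has a unique critical point, which is its strict global maximum and lies on $\mathbf{N}_{\varepsilon }$ (here $0<q\leq p\leq 1$ and $\beta \leq 0$ enter), so $c_{\varepsilon }=\inf _{(u,v)\in \mathbf{T}}\max _{s,t>0}J_{\varepsilon }(su,tv)$. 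From $J_{\varepsilon }(su,tv)=\widetilde{J}_{1,\varepsilon }(su)+\widetilde{J}_{2,\varepsilon }(tv)-\frac{s^{q+1}t^{q+1}}{q+1}\int _{\mathbb{R}^{N}}\beta (x)|u^{+}|^{q+1}|v^{+}|^{q+1}\,\mathrm{d}x\geq \widetilde{J}_{1,\varepsilon }(su)+\widetilde{J}_{2,\varepsilon }(tv)$ and $\max _{s>0}\widetilde{J}_{1,\varepsilon }(su)\geq b_{1,\varepsilon }$ (the maximiser lies on $\mathbf{M}_{1,\varepsilon }$) one obtains the basic lower bound $c_{\varepsilon }\geq b_{1,\varepsilon }+b_{2,\varepsilon }$; note also the identity $J_{\varepsilon }(u,v)=\frac{p}{2p+2}(\Vert u\Vert _{V_{1}}^{2}+\Vert v\Vert _{V_{2}}^{2})+\frac{p-q}{(p+1)(q+1)}\big(-\int _{\mathbb{R}^{N}}\beta (x)|u^{+}|^{q+1}|v^{+}|^{q+1}\,\mathrm{d}x\big)$ on $\mathbf{N}_{\varepsilon }$, both summands being nonnegative.

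$(i)$ If $V_{l,\infty }>\lambda _{l}$ then, for $\varepsilon$ small, each $b_{l,\varepsilon }$ is attained: a minimising sequence for $b_{l,\varepsilon }$ cannot lose mass to infinity, where the cost is $\varepsilon ^{N}m_{V_{l,\infty },\mu _{l}}+o(\varepsilon ^{N})>b_{l,\varepsilon }$, so it converges to a scalar ground state $W_{l,\varepsilon }$ which, as $\varepsilon \to 0$, concentrates at a global minimum point $z_{1,i_{0}}$ of $V_{1}$ (resp.\ $z_{2,j_{0}}$ of $V_{2}$). Using $(W_{1,\varepsilon },W_{2,\varepsilon })$ as a test pair and the polynomial decay of the ground state in Proposition \ref{P1.1}$(ii)$ to make the cross term $o(\varepsilon ^{N})$ yields $c_{\varepsilon }\leq \max _{s,t>0}J_{\varepsilon }(sW_{1,\varepsilon },tW_{2,\varepsilon })=b_{1,\varepsilon }+b_{2,\varepsilon }+o(\varepsilon ^{N})$. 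Next apply concentration--compactness to a minimising sequence $(u_{n},v_{n})\subset \mathbf{N}_{\varepsilon }$ for $c_{\varepsilon }$: it is bounded by the identity above; vanishing is impossible because the Nehari identities and the fractional Sobolev embedding force $\Vert u_{n}\Vert _{V_{1}},\Vert v_{n}\Vert _{V_{2}}\geq \delta >0$; and any (partial) escape of mass to infinity contributes an energy exceeding $b_{1,\varepsilon }+b_{2,\varepsilon }+\varepsilon ^{N}\min _{l}(m_{V_{l,\infty },\mu _{l}}-m_{\lambda _{l},\mu _{l}})+o(\varepsilon ^{N})>c_{\varepsilon }$ for $\varepsilon$ small, using the fixed gaps $m_{V_{l,\infty },\mu _{l}}>m_{\lambda _{l},\mu _{l}}$, which contradicts minimality. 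Hence, after bounded translations, $(u_{n},v_{n})$ converges strongly to some $(u_{\varepsilon },v_{\varepsilon })\in \mathbf{N}_{\varepsilon }$ with $J_{\varepsilon }(u_{\varepsilon },v_{\varepsilon })=c_{\varepsilon }$, which by the usual arguments (Lagrange multiplier on $\mathbf{N}_{\varepsilon }$, maximum principle) is a positive least energy solution of $(P_{\varepsilon })$. Finally, since $J_{\varepsilon }(u_{\varepsilon },v_{\varepsilon })=b_{1,\varepsilon }+b_{2,\varepsilon }+o(\varepsilon ^{N})$ is of minimal order, the energy estimates and the concentration analysis underlying Theorem \ref{t1.1} force $(u_{\varepsilon },v_{\varepsilon })$ to concentrate, as $\varepsilon \to 0$, at a pair of global minimum points with rescaled profile the ground state of Proposition \ref{P1.1}, and the uniqueness and non--degeneracy in Proposition \ref{P1.1}$(i),(iii)$ then identify $(u_{\varepsilon },v_{\varepsilon })$ with one of the solutions of Theorem \ref{t1.1}$(i)$; choosing $\varepsilon _{\ast \ast }\in (0,\varepsilon _{0}]$ small enough for all of this completes part $(i)$.

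$(ii)$ If $\lim _{|x|\to \infty }V_{l}(x)=\lambda _{l}$ then $b_{l,\varepsilon }$ is \emph{not} attained: for $w\in \mathbf{M}_{l,\varepsilon }$ one has $\widetilde{J}_{l,\varepsilon }(w)=\frac{p}{2p+2}\Vert w\Vert _{V_{l}}^{2}$, which by $(D_{2})$ strictly exceeds the same quantity with $V_{l}$ replaced by the constant $\lambda _{l}$ (because $V_{l}>\lambda _{l}$ off a finite set and $w\not\equiv 0$), and the latter is $\geq \varepsilon ^{N}m_{\lambda _{l},\mu _{l}}$ after projecting onto the constant--potential Nehari manifold; together with the reverse bound from scalar minimising sequences escaping to infinity this gives $b_{l,\varepsilon }=\varepsilon ^{N}m_{\lambda _{l},\mu _{l}}$, a value the strict inequality forbids from being realised. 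Taking as test pair two scalar minimising sequences for $b_{1,\varepsilon }$ and $b_{2,\varepsilon }$ escaping along divergent directions, so that their interaction vanishes, yields $c_{\varepsilon }\leq b_{1,\varepsilon }+b_{2,\varepsilon }$, hence $c_{\varepsilon }=b_{1,\varepsilon }+b_{2,\varepsilon }$ by the lower bound of the first paragraph. If $c_{\varepsilon }$ were attained by some $(u_{0},v_{0})\in \mathbf{N}_{\varepsilon }$ then, since $(1,1)$ is the maximiser of $(s,t)\mapsto J_{\varepsilon }(su_{0},tv_{0})$,
\[
c_{\varepsilon }=\max _{s,t>0}J_{\varepsilon }(su_{0},tv_{0})\geq \max _{s>0}\widetilde{J}_{1,\varepsilon }(su_{0})+\max _{t>0}\widetilde{J}_{2,\varepsilon }(tv_{0})\geq b_{1,\varepsilon }+b_{2,\varepsilon }=c_{\varepsilon },
\]
so every inequality is an equality; in particular $\max _{s>0}\widetilde{J}_{1,\varepsilon }(su_{0})=b_{1,\varepsilon }$, i.e.\ the Nehari rescaling of $u_{0}$ realises $b_{1,\varepsilon }$ and is thus a scalar ground state, contradicting the non--attainment just shown. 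Therefore $c_{\varepsilon }$ is not attained, and since every solution of $(P_{\varepsilon })$ in $\mathbf{N}_{\varepsilon }$ satisfies $J_{\varepsilon }\geq c_{\varepsilon }$ with equality only at a minimiser, each of the $k\times \ell +m$ solutions of Theorem \ref{t1.1}$(i)$ has $J_{\varepsilon }>c_{\varepsilon }$, i.e.\ is a higher energy solution.

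The principal difficulty lies in part $(i)$: executing the concentration--compactness argument for the coupled system --- in particular treating the subcase where a single component escapes to infinity, checking that the recovered minimiser genuinely concentrates at a pair of minimum points, and matching it with one of the already--constructed solutions of Theorem \ref{t1.1}, which forces the use of the uniqueness and non--degeneracy statements of Proposition \ref{P1.1}. The other technical load, common to both parts, is the fibering lemma that $(s,t)\mapsto J_{\varepsilon }(su,tv)$ possesses a unique critical point on $(0,\infty )^{2}$ which is a strict global maximum: this is exactly where the structural hypotheses $0<q\leq p\leq 1$ and $\beta \leq 0$ are used, and it underpins the identity $c_{\varepsilon }=\inf _{\mathbf{T}}\max _{s,t>0}J_{\varepsilon }(su,tv)$ on which the whole scheme rests; making the various smallness thresholds uniform in $\varepsilon$ depends on the polynomial decay estimate of Proposition \ref{P1.1}$(ii)$.
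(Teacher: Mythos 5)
Your proposal takes a substantially different route from the paper, and as written it rests on an unproven and nontrivial claim. The paper proves part $(ii)$ by combining Lemma \ref{L5.3} ($\varepsilon^{-N}c_{\varepsilon}=\widehat{\alpha}_{1}+\widehat{\alpha}_{2}$ when $\lim_{|x|\to\infty}V_{l}=\lambda_{l}$) with the second assertion of Lemma \ref{L2.3}, whose proof is short and uses nothing beyond the strong maximum principle and the strict version of Lemma \ref{L2.1} applied directly on $\mathbf{N}_{\varepsilon}$; for part $(i)$ the paper proves (Lemmas \ref{L5.1} and \ref{L5.2}) that for $\varepsilon$ small the whole low-energy sublevel set $\widehat{M}(\varepsilon,\delta_{\varepsilon})$ of $\mathbf{N}_{\varepsilon}$ sits inside $\bigcup_{i,j}N_{i,j}(\varepsilon)$, whence $c_{\varepsilon}=\gamma_{i_{0},j_{0}}(\varepsilon)$ for some $(i_{0},j_{0})$, and Theorem \ref{t3.1} supplies the minimiser. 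None of that requires a fibering characterisation of $c_{\varepsilon}$.

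The gap in your argument is precisely the fibering claim that $(s,t)\mapsto J_{\varepsilon}(su,tv)$ has a unique critical point on $(0,\infty)^{2}$ which is a strict global maximum, and hence that $c_{\varepsilon}=\inf_{(u,v)\in\mathbf{T}}\max_{s,t>0}J_{\varepsilon}(su,tv)$. Because $\beta\leq 0$, the cross term enters $J_{\varepsilon}(su,tv)$ with a \emph{positive} sign, $+\frac{s^{q+1}t^{q+1}}{q+1}\big(-\int_{\mathbb{R}^{N}}\beta|u^{+}|^{q+1}|v^{+}|^{q+1}\big)$, and the Hessian of the fiber map at a Nehari point is exactly (up to sign) the quantity $\Gamma(u,v)$ from Lemma \ref{L3.1}, whose sign the paper controls only \emph{after} the energy bound of Lemma \ref{L2.2} (inequality (\ref{2.10})); local negative definiteness there does not give you global maximality or uniqueness of the critical point, and for $q$ close to $1$ the coupled-term Hessian $\nabla^{2}(s^{q+1}t^{q+1})$ is indefinite. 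Your part $(ii)$ sandwich "$c_{\varepsilon}=\max_{s,t}J_{\varepsilon}(su_{0},tv_{0})\geq\cdots$'' therefore has an unjustified first equality. The conclusion is nevertheless true, and can be reached without fibering by the paper's route: on $\mathbf{N}_{\varepsilon}$ one has $J_{\varepsilon}(u_{0},v_{0})\geq\frac{p}{2p+2}\Vert(u_{0},v_{0})\Vert_{H}^{2}$, and since $u_{0},v_{0}>0$ (maximum principle) and $\beta<0$ somewhere $(D_{4})$, the strict version of Lemma \ref{L2.1} gives $\frac{p}{2p+2}\Vert(u_{0},v_{0})\Vert_{H}^{2}>\varepsilon^{N}(\widehat{\alpha}_{1}+\widehat{\alpha}_{2})=c_{\varepsilon}$, a contradiction. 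A second, smaller issue is in part $(i)$: the final identification of the recovered minimiser with one of Theorem \ref{t1.1}'s solutions should not go through "uniqueness and non-degeneracy'' of the constant-coefficient ground state in Proposition \ref{P1.1}, which says nothing about the $\varepsilon$-problem with variable potential; the correct mechanism is that the barycenters $\Phi(u_{\varepsilon}),\Phi(v_{\varepsilon})$ land in $C_{s}(z_{1,i_{0}})\times C_{s}(z_{2,j_{0}})$ (Lemmas \ref{L5.1}--\ref{L5.2}), forcing $(u_{\varepsilon},v_{\varepsilon})\in N_{i_{0},j_{0}}(\varepsilon)$ and hence $c_{\varepsilon}\geq\gamma_{i_{0},j_{0}}(\varepsilon)$, so the constrained minimiser of Theorem \ref{t3.1} is itself a least energy solution.
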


\begin{remark}
Compared with the local operator $-\Delta $, the operator $(-\Delta
)^{\alpha }$ with $\alpha \in (0,1)$ on $\mathbb{R}^{N}$ is nonlocal, which
can be expressed as follows: the quantity $(-\Delta )^{\alpha }u(x)$ depend
on not only the values of $u$ in a neighborhood of $x$ (as is the case for
the Laplacian), but also the values of $u$ at any point $y\in \mathbb{R}^{N}$%
, and it is expected that the standard techniques for $-\Delta $ cannot be
used directly.
\end{remark}

\begin{remark}
The main difficulty when dealing with problem $(P_{\varepsilon })$ lies in
the lack of compactness of the embedding from $H^{\alpha }(\mathbb{R}^{N})$
into $L^{r}(\mathbb{R}^{N}),~2\leq r<2_{\alpha }^{\ast }$, which prevents us
from using the variational methods in a standard way. We solve this
difficulty by using the Nehari manifold technique and the energy estimates,
see Section 3 for details.
\end{remark}

\begin{notation}
Throughout this paper, we shall denote $|\cdot |_{r}$ the $L^{r}$-norm for $%
1\leq r\leq +\infty $ and $C$ various positive generic constants, which may
vary from line to line. $B^{N}(x,r)$ denotes a ball centered at $x$ with
radius $r$ in $\mathbb{R}^{N}$. Also if we take a subsequence of a sequence $%
\{(u_{n},v_{n})\}$ we shall denote it again by $\{(u_{n},v_{n})\}$. We use $%
o(1)$ to denote any quantity which tends to zero as $n\rightarrow \infty $
and $o_{\varepsilon }(1)$ to denote any quantity which tends to zero as $%
\varepsilon \rightarrow 0$.
\end{notation}

The remainder of this paper is as follows. In Section 2, we give some
preliminaries. In Section 3, we construct the Palais--Smale (PS) sequences.
In Section 4, we prove Theorem \ref{t1.1}. In Section 5, we prove Theorem %
\ref{t1.2}.

\section{Preliminaries}

First of all, it is easy to see that if we make the change of variables $%
x=\varepsilon z$, then we can rewrite Eq. $(P_{\varepsilon })$ as the
following equivalent equation:
\begin{equation}
\left\{
\begin{array}{ll}
(-\Delta )^{\alpha }u+V_{1}\left( \varepsilon x\right) u=\left( \mu
_{1}|u|^{2p}+\beta \left( \varepsilon x\right) |u|^{q-1}|v|^{q+1}\right) u,
& \text{in }\mathbb{R}^{N}, \\
(-\Delta )^{\alpha }v+V_{2}\left( \varepsilon x\right) v=\left( \mu
_{2}|v|^{2p}+\beta \left( \varepsilon x\right) |v|^{q-1}|u|^{q+1}\right) v,
& \text{in }\mathbb{R}^{N}\,, \\
u,v\in H^{\alpha }(\mathbb{R}^{N}),\quad u,v>0, & \text{in }\mathbb{R}^{N}.%
\end{array}%
\right.  \tag{$\widetilde{P}_{\varepsilon }$}
\end{equation}

Now we present some related results about Eq. $\left( \widehat{E}_{l}\right)
$. It is obvious that Eq. $\left( \widehat{E}_{l}\right) $ is variational,
and its solutions are the critical points of the functional $\widehat{I}%
_{l}\left( u\right) $ defined in $H^{\alpha }(\mathbb{R}^{N})$ as
\begin{equation*}
\widehat{I}_{l}\left( u\right) =\frac{1}{2}\int_{\mathbb{R}^{N}}\left(
|(-\triangle )^{\frac{\alpha }{2}}u|^{2}+\lambda _{l}u^{2}\right) \mathrm{d}%
x-\frac{\mu _{l}}{2p+2}\int_{\mathbb{R}^{N}}\left\vert u^{+}\right\vert
^{2p+2}\mathrm{d}x,~~l=1,2.
\end{equation*}%
Furthermore, one can see that $\widehat{I}_{l}$ is a $C^{1}$ functional with
the derivative given by%
\begin{equation*}
\left\langle \widehat{I}_{l}^{\prime }(u),\varphi \right\rangle =\int_{%
\mathbb{R}^{N}}\left( (-\triangle )^{\frac{\alpha }{2}}u(-\triangle )^{\frac{%
\alpha }{2}}v+\lambda _{l}u\varphi \right) \mathrm{d}x-\mu _{l}\int_{\mathbb{%
R}^{N}}\left\vert u^{+}\right\vert ^{2p}u^{+}\varphi \mathrm{d}x,~~l=1,2,
\end{equation*}%
for all $\varphi \in H^{\alpha }(\mathbb{R}^{N})$, where $\widehat{I}%
_{l}^{\prime }$ denotes the Fr\'{e}chet derivative of $\widehat{I}_{l}.$

Define the Nehari manifold
\begin{equation*}
\widehat{\mathbf{M}}_{l}:=\left\{ u\in H^{\alpha }(\mathbb{R}^{N})\backslash
\{0\}:\left\langle \widehat{I}_{l}^{\prime }\left( u\right) ,u\right\rangle
=0\right\} \text{ for }l=1,2.
\end{equation*}%
Then, by Frank, Lenzmann and Silvestre \cite{FLS}, we may assume that Eq. $%
\left( \widehat{E}_{l}\right) $ has a unique least energy positive solution $%
\widehat{\omega }_{l}$ (which up to translation) such that
\begin{equation*}
\widehat{\alpha }_{l}=\inf_{u\in \widehat{\mathbf{M}}_{l}}\widehat{I}%
_{l}\left( u\right) =\widehat{I}_{l}\left( \widehat{\omega }_{l}\right)
\text{ for }l=1,2.
\end{equation*}%
Furthermore, $\widehat{\omega }_{l}$ is radial, i.e., $\widehat{\omega }%
_{l}(x)=\widehat{\omega }_{l}(|x|)$.

To prove the main results, we will show some technical lemmas, whose proofs
follow with the same type of arguments found in \cite{LW}. However for the
readers' convenience we will write their proofs. we need the following
lemmas.

\begin{lemma}
\label{L2.1}Assume that $\varepsilon >0$ and $(u,v)\in \mathbf{N}%
_{\varepsilon }.$ Then
\begin{equation}
\mu _{1}\int_{\mathbb{R}^{N}}\left\vert u^{+}\right\vert ^{2p+2}\mathrm{d}%
x\geq \int_{\mathbb{R}^{N}}\left( \varepsilon ^{2\alpha }\left\vert
(-\triangle )^{\frac{\alpha }{2}}u\right\vert ^{2}+V_{1}(x)u^{2}\right)
\mathrm{d}x\geq \frac{2p+2}{p}\varepsilon ^{N}\widehat{\alpha }_{1}
\label{2.1}
\end{equation}%
and
\begin{equation}
\mu _{2}\int_{\mathbb{R}^{N}}\left\vert v^{+}\right\vert ^{2p+2}\mathrm{d}%
x\geq \int_{\mathbb{R}^{N}}\left( \varepsilon ^{2\alpha }\left\vert
(-\triangle )^{\frac{\alpha }{2}}v\right\vert ^{2}+V_{2}(x)v^{2}\right)
\mathrm{d}x\geq \frac{2p+2}{p}\varepsilon ^{N}\widehat{\alpha }_{2}.
\label{2.2}
\end{equation}%
Furthermore, if $\int_{\mathbb{R}^{N}}\beta(x) \left\vert u^{+}\right\vert
^{q+1}\left\vert v^{+}\right\vert ^{q+1}\mathrm{d}x<0,$ then all the
inequalities of $\left( \ref{2.1}\right) $ and $\left( \ref{2.2}\right) $
become strict, i.e.
\begin{equation*}
\mu _{1}\int_{\mathbb{R}^{N}}\left\vert u^{+}\right\vert ^{2p+2}\mathrm{d}%
x>\int_{\mathbb{R}^{N}}\left( \varepsilon ^{2\alpha }\left\vert (-\triangle
)^{\frac{\alpha }{2}}u\right\vert ^{2}+V_{1}(x)u^{2}\right) \mathrm{d}x>%
\frac{2p+2}{p}\varepsilon ^{N}\widehat{\alpha }_{1}
\end{equation*}%
and
\begin{equation*}
\mu _{2}\int_{\mathbb{R}^{N}}\left\vert v^{+}\right\vert ^{2p+2}\mathrm{d}%
x>\int_{\mathbb{R}^{N}}\left( \varepsilon ^{2\alpha }\left\vert (-\triangle
)^{\frac{\alpha }{2}}v\right\vert ^{2}+V_{2}(x)v^{2}\right) \mathrm{d}x>%
\frac{2p+2}{p}\varepsilon ^{N}\widehat{\alpha }_{2}.
\end{equation*}
\end{lemma}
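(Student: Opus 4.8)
The plan is to exploit the fact that $(u,v)\in\mathbf{N}_\varepsilon$ gives two scalar Nehari-type identities, one for each component, and that the coupling term appears with the \emph{same} sign in both (here, nonpositive since $\beta\le 0$). First I would read off from the definition of $\mathbf{N}_\varepsilon$ the identity
\begin{equation*}
\left\Vert u\right\Vert_{V_1}^2=\mu_1\int_{\mathbb{R}^N}|u^+|^{2p+2}\,\mathrm{d}x+\int_{\mathbb{R}^N}\beta(x)|u^+|^{q+1}|v^+|^{q+1}\,\mathrm{d}x,
\end{equation*}
and since by $(D_3)$ the last integral is $\le 0$, this immediately yields the first inequality in $(\ref{2.1})$, namely $\mu_1\int_{\mathbb{R}^N}|u^+|^{2p+2}\,\mathrm{d}x\ge\left\Vert u\right\Vert_{V_1}^2$; the same argument with the second identity gives the first inequality in $(\ref{2.2})$. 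Moreover, if the coupling integral is strictly negative, this first inequality becomes strict, handling half of the ``furthermore'' claim. Note that along the way one needs $u^+\not\equiv 0$ and $v^+\not\equiv 0$, which holds because $(u,v)\in\mathbf{T}$ and $\left\Vert u\right\Vert_{V_1}^2>0$ forces the right-hand side to be positive.

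For the second (lower-bound) inequality in $(\ref{2.1})$, the idea is to rescale and compare with the scalar problem $(\widehat{E}_1)$. Writing $\tilde u(x)=u(\varepsilon x)$ and using $V_1(\varepsilon x)\ge\lambda_1$ from $(D_2)$, the quantity $a:=\varepsilon^{-N}\left\Vert u\right\Vert_{V_1}^2$ dominates $\int_{\mathbb{R}^N}\bigl(|(-\Delta)^{\alpha/2}\tilde u|^2+\lambda_1\tilde u^2\bigr)\,\mathrm{d}x$, while $b:=\varepsilon^{-N}\mu_1\int_{\mathbb{R}^N}|u^+|^{2p+2}\,\mathrm{d}x=\mu_1\int_{\mathbb{R}^N}|\tilde u^+|^{2p+2}\,\mathrm{d}x$, and we have just shown $b\ge a$. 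Now consider $t\tilde u$ for $t>0$: since the nonlinear term has homogeneity $2p+2>2$, the function $t\mapsto\widehat{I}_1(t\tilde u)$ is positive for small $t$, negative for large $t$, and attains its maximum at the unique $t_0>0$ with $t_0\tilde u\in\widehat{\mathbf{M}}_1$; one computes $t_0^{2p}= \bigl(\int(|(-\Delta)^{\alpha/2}\tilde u|^2+\lambda_1\tilde u^2)\bigr)\big/\bigl(\mu_1\int|\tilde u^+|^{2p+2}\bigr)\le a/b\le 1$. Therefore
\begin{equation*}
\widehat{\alpha}_1\le\widehat{I}_1(t_0\tilde u)=\Bigl(\tfrac12-\tfrac{1}{2p+2}\Bigr)t_0^{2p+2}\mu_1\int_{\mathbb{R}^N}|\tilde u^+|^{2p+2}\,\mathrm{d}x=\frac{p}{2p+2}\,t_0^{2p+2}\,b\le\frac{p}{2p+2}\,t_0^{2}\,a\le\frac{p}{2p+2}\,a,
\end{equation*}
which rearranges to $\varepsilon^{-N}\left\Vert u\right\Vert_{V_1}^2=a\ge\frac{2p+2}{p}\widehat{\alpha}_1$, i.e. the desired lower bound; in the strict case $b>a$ forces $t_0<1$, so the last inequality is strict. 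The component $v$ is handled identically with $\lambda_2,\mu_2,\widehat{\alpha}_2$.

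The main obstacle is the second step: one must be careful that the comparison with $\widehat{\mathbf{M}}_1$ is legitimate, i.e. that $t_0\tilde u$ is genuinely an admissible competitor for $\widehat{\alpha}_1$. This requires $\tilde u^+\not\equiv 0$ (so that the projection $t_0$ onto $\widehat{\mathbf{M}}_1$ exists and is unique), and it requires identifying $\widehat{I}_1(t_0\tilde u)$ correctly — using $\langle\widehat{I}_1'(t_0\tilde u),t_0\tilde u\rangle=0$ to replace the full gradient norm by the nonlinear integral. A subtlety worth flagging is that $\widehat{I}_1$ is defined using $u^+$ in the nonlinearity but the full gradient norm of $u$, so on $t\tilde u$ the negative part $\tilde u^-$ only raises the quadratic part and does not affect $\int|\tilde u^+|^{2p+2}$; this is harmless and in fact only strengthens the inequality. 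Everything else — positivity of the various integrals, the elementary optimization in $t$, and the translation invariance used to define $\widehat{\alpha}_1$ — is routine once this point is set up correctly.
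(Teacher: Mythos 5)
Your argument is correct and is essentially the paper's own proof: after using $\beta\le 0$ and the two Nehari identities to get $\mu_1\int|u^+|^{2p+2}\ge\|u\|_{V_1}^2$ (and its analogue for $v$), you rescale, project the rescaled function onto the scalar Nehari manifold $\widehat{\mathbf{M}}_1$ with a factor $t_0\le 1$, and compare energies with $\widehat{\alpha}_1$ — exactly the paper's $s_\varepsilon,t_\varepsilon$ construction. The strictness discussion matches as well, so there is nothing missing.
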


\begin{proof}
Let $\left( u,v\right) \in\mathbf{\ N}_{\varepsilon }$. Set $u_{\varepsilon
}\left( x\right) =u\left( \varepsilon x\right) $ and $v_{\varepsilon }\left(
x\right) =v\left( \varepsilon x\right) $ for $x\in \mathbb{R}^{N}$. Due to $%
\beta(x) \leq 0$ in $\mathbb{R}^{N}$ and $\left( u,v\right) \in \mathbf{N}%
_{\varepsilon } $, it is obvious that
\begin{eqnarray}
\int_{\mathbb{R}^{N}}\left( \left\vert (-\triangle )^{\frac{\alpha }{2}%
}u_{\varepsilon }\right\vert ^{2}+\lambda _{1}u_{\varepsilon }^{2}\right)
\mathrm{d}x &\leq &\int_{\mathbb{R}^{N}}\left( \left\vert (-\triangle )^{%
\frac{\alpha }{2}}u_{\varepsilon }\right\vert ^{2}+V_{1}(\varepsilon
x)u_{\varepsilon }^{2}\right) \mathrm{d}x  \label{2.3} \\
&=&\mu _{1}\int_{\mathbb{R}^{N}}\left\vert u_{\varepsilon }^{+}\right\vert
^{2p+2}\mathrm{d}x+\int_{\mathbb{R}^{N}}\beta \left( \varepsilon x\right)
\left\vert u_{\varepsilon }^{+}\right\vert ^{q+1}\left\vert v_{\varepsilon
}^{+}\right\vert ^{q+1}\mathrm{d}x  \notag \\
&\leq &\mu _{1}\int_{\mathbb{R}^{N}}\left\vert u_{\varepsilon
}^{+}\right\vert ^{2p+2}\mathrm{d}x  \notag
\end{eqnarray}%
and
\begin{eqnarray}
\int_{\mathbb{R}^{N}}\left( \left\vert (-\triangle )^{\frac{\alpha }{2}%
}v_{\varepsilon }\right\vert ^{2}+\lambda _{2}v_{\varepsilon }^{2}\right)
\mathrm{d}x &\leq &\int_{\mathbb{R}^{N}}\left( \left\vert (-\triangle )^{%
\frac{\alpha }{2}}v_{\varepsilon }\right\vert ^{2}+V_{2}(\varepsilon
x)v_{\varepsilon }^{2}\right) \mathrm{d}x  \label{2.4} \\
&=&\mu _{2}\int_{\mathbb{R}^{N}}\left\vert v_{\varepsilon }^{+}\right\vert
^{2p+2}\mathrm{d}x+\int_{\mathbb{R}^{N}}\beta \left( \varepsilon x\right)
\left\vert u_{\varepsilon }^{+}\right\vert ^{q+1}\left\vert v_{\varepsilon
}^{+}\right\vert ^{q+1}\mathrm{d}x  \notag \\
&\leq &\mu _{2}\int_{\mathbb{R}^{N}}\left\vert v_{\varepsilon
}^{+}\right\vert ^{2p+2}\mathrm{d}x.  \notag
\end{eqnarray}%
Let
\begin{equation*}
s_{\varepsilon }=\left( \frac{\int_{\mathbb{R}^{N}}\left( \left\vert
(-\triangle )^{\frac{\alpha }{2}}u_{\varepsilon }\right\vert ^{2}+\lambda
_{1}u_{\varepsilon }^{2}\right) \mathrm{d}x}{\mu _{1}\int_{\mathbb{R}%
^{N}}\left\vert u_{\varepsilon }^{+}\right\vert ^{2p+2}\mathrm{d}x}\right)
^{1/2p}\text{and }~t_{\varepsilon }=\left( \frac{\int_{\mathbb{R}^{N}}\left(
\left\vert (-\triangle )^{\frac{\alpha }{2}}v_{\varepsilon }\right\vert
^{2}+\lambda _{2}v_{\varepsilon }^{2}\right) \mathrm{d}x}{\mu _{2}\int_{%
\mathbb{R}^{N}}\left\vert v_{\varepsilon }^{+}\right\vert ^{2p+2}\mathrm{d}x}%
\right) ^{1/2p}.
\end{equation*}%
Then by (\ref{2.3}) and (\ref{2.4}), it is easy to verify that $%
0<s_{\varepsilon },t_{\varepsilon }\leq 1$, $s_{\varepsilon }u_{\varepsilon
}\in \widehat{\mathbf{M}}_{1}$ and $t_{\varepsilon }v_{\varepsilon }\in
\widehat{\mathbf{M}}_{2}$. Hence,
\begin{equation}
\widehat{\alpha }_{1}\leq \widehat{I}_{1}\left( s_{\varepsilon
}u_{\varepsilon }\right) \leq \frac{p}{2p+2}\int_{\mathbb{R}^{N}}\left(
\left\vert (-\triangle )^{\frac{\alpha }{2}}u_{\varepsilon }\right\vert
^{2}+\lambda _{1}u_{\varepsilon }^{2}\right) \mathrm{d}x  \label{2.5}
\end{equation}%
and
\begin{equation}
\widehat{\alpha }_{2}\leq \widehat{I}_{2}\left( t_{\varepsilon
}v_{\varepsilon }\right) \leq \frac{p}{2p+2}\int_{\mathbb{R}^{N}}\left(
\left\vert (-\triangle )^{\frac{\alpha }{2}}v_{\varepsilon }\right\vert
^{2}+\lambda _{2}v_{\varepsilon }^{2}\right) \mathrm{d}x.  \label{2.6}
\end{equation}%
Thus by $\left( \ref{2.3}\right) -\left( \ref{2.6}\right) $, we obtain~ (\ref%
{2.1}) and (\ref{2.2}). Similarly, one may follow the above argument to show
that all the inequalities of $\left( \ref{2.1}\right) $ and (\ref{2.2})
become strict if $\int_{\mathbb{R}^{N}}\beta(x) \left\vert u^{+}\right\vert
^{q+1}\left\vert v^{+}\right\vert ^{q+1}\mathrm{d}x<0$. Therefore, we may
complete the proof.
\end{proof}

The integral $\int_{\mathbb{R}^{N}}\beta(x) \left\vert u^{+}\right\vert
^{q+1}\left\vert v^{+}\right\vert ^{q+1}\mathrm{d}x$ may play an important
role in the quantity of the energy functional $J_{\varepsilon }$. Here we
state the crucial energy estimates, given by

\begin{lemma}
\label{L2.2}Let $\varepsilon ,\sigma >0$ and $\left( u,v\right) \in \mathbf{N%
}_{\varepsilon }.$ If $\int_{\mathbb{R}^{N}}\beta (x)\left\vert
u^{+}\right\vert ^{q+1}\left\vert v^{+}\right\vert ^{q+1}\mathrm{d}x\leq
-\sigma \varepsilon ^{N},$ then
\begin{equation}
\varepsilon ^{-N}J_{\varepsilon }\left( u,v\right) >\widehat{\alpha }_{1}+%
\widehat{\alpha }_{2}+\overline{\delta },  \label{2.7}
\end{equation}%
where
\begin{equation}
\overline{\delta }=\frac{p\sigma }{p+1}\min \left\{ 1,\frac{1}{2}\min
\left\{ \frac{S_{1}^{p+1}}{\mu _{1}\left( \frac{2p+2}{p}\widehat{\alpha }%
_{1}+\sigma \right) ^{p}},\frac{S_{2}^{p+1}}{\mu _{2}\left( \frac{2p+2}{p}%
\widehat{\alpha }_{2}+\sigma \right) ^{p}}\right\} \right\} ,  \label{2.8}
\end{equation}%
and
\begin{equation}
S_{l}=\inf_{w\in H^{\alpha }\left( \mathbb{R}^{N}\right) \backslash \left\{
0\right\} }\frac{\int_{\mathbb{R}^{N}}(|(-\triangle )^{\frac{\alpha }{2}%
}w|^{2}+\lambda _{l}w^{2})\mathrm{d}x}{\left( \int_{\mathbb{R}%
^{N}}\left\vert w^{+}\right\vert ^{2p+2}\mathrm{d}x\right) ^{1/p+1}}>0,\quad
l=1,2.  \label{2.9}
\end{equation}%
Furthermore, if $0<\sigma <(2p+2)\sqrt{\widehat{\alpha }_{1}\widehat{\alpha }%
_{2}},$ then
\begin{eqnarray}
&&p^{2}\mu _{1}\mu _{2}\int_{\mathbb{R}^{N}}\left\vert u^{+}\right\vert
^{2p+2}\mathrm{d}x\int_{\mathbb{R}^{N}}\left\vert v^{+}\right\vert ^{2p+2}%
\mathrm{d}x-\left( \int_{\mathbb{R}^{N}}\beta \left( x\right) \left\vert
u^{+}\right\vert ^{q+1}\left\vert v^{+}\right\vert ^{q+1}\mathrm{d}x\right)
^{2}  \notag \\
&>&\varepsilon ^{2N}\left( 4(p+1)^{2}\widehat{\alpha }_{1}\widehat{\alpha }%
_{2}-\sigma ^{2}\right) >0  \label{2.10}
\end{eqnarray}%
for all $\left( u,v\right) \in \mathbf{N}_{\varepsilon }$ and $\varepsilon
^{-N}J_{\varepsilon }\left( u,v\right) \leq \widehat{\alpha }_{1}+\widehat{%
\alpha }_{2}+\overline{\delta }.$
\end{lemma}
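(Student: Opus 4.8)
I would prove this by transferring everything to the rescaled equation $(\widetilde{P}_{\varepsilon})$ as in the proof of Lemma~\ref{L2.1}: set $u_{\varepsilon}(x)=u(\varepsilon x)$, $v_{\varepsilon}(x)=v(\varepsilon x)$, so each integral gains a factor $\varepsilon^{-N}$ and the weights become $V_{l}(\varepsilon x)\geq\lambda_{l}$. Put $\rho_{1}=\int_{\mathbb{R}^{N}}(|(-\triangle)^{\alpha/2}u_{\varepsilon}|^{2}+V_{1}(\varepsilon x)u_{\varepsilon}^{2})\mathrm{d}x$ and $\rho_{2}$ likewise for $v_{\varepsilon}$, and $t=\varepsilon^{-N}\bigl|\int_{\mathbb{R}^{N}}\beta(x)|u^{+}|^{q+1}|v^{+}|^{q+1}\mathrm{d}x\bigr|\geq\sigma>0$. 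For $(u,v)\in\mathbf{N}_{\varepsilon}$, the scaled Nehari identities give $\mu_{1}\int_{\mathbb{R}^{N}}|u_{\varepsilon}^{+}|^{2p+2}\mathrm{d}x=\rho_{1}+t$ and $\mu_{2}\int_{\mathbb{R}^{N}}|v_{\varepsilon}^{+}|^{2p+2}\mathrm{d}x=\rho_{2}+t$, and, after eliminating the quadratic part and using $p\geq q$, $\beta\leq0$,
\[
\varepsilon^{-N}J_{\varepsilon}(u,v)=\frac{p}{2(p+1)}(\rho_{1}+\rho_{2})+\frac{p-q}{(p+1)(q+1)}\,t .
\]
Since the last term is $\geq0$, $(\ref{2.7})$ reduces to a lower bound for $\tfrac{p}{2(p+1)}(\rho_{1}+\rho_{2})$, the binding case being $p=q$.

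For this I would use, for each $l$, three facts (stated for $l=1$). (a) Lemma~\ref{L2.1} gives $\rho_{l}\geq\tfrac{2p+2}{p}\widehat{\alpha}_{l}=:M_{l}$. (b) Retracing the projection argument of Lemma~\ref{L2.1} but keeping, rather than bounding by $1$, the scalar $s_{\varepsilon}\in(0,1]$ with $s_{\varepsilon}u_{\varepsilon}\in\widehat{\mathbf{M}}_{1}$, the inequality $\widehat{\alpha}_{1}\leq\widehat{I}_{1}(s_{\varepsilon}u_{\varepsilon})=\tfrac{p}{2(p+1)}s_{\varepsilon}^{2}\!\int_{\mathbb{R}^{N}}(|(-\triangle)^{\alpha/2}u_{\varepsilon}|^{2}+\lambda_{1}u_{\varepsilon}^{2})\mathrm{d}x$, together with $s_{\varepsilon}^{2p}=\int_{\mathbb{R}^{N}}(|(-\triangle)^{\alpha/2}u_{\varepsilon}|^{2}+\lambda_{1}u_{\varepsilon}^{2})\mathrm{d}x\big/(\mu_{1}\int_{\mathbb{R}^{N}}|u_{\varepsilon}^{+}|^{2p+2}\mathrm{d}x)$, $\int_{\mathbb{R}^{N}}(|(-\triangle)^{\alpha/2}u_{\varepsilon}|^{2}+\lambda_{1}u_{\varepsilon}^{2})\mathrm{d}x\leq\rho_{1}$ and $\mu_{1}\int_{\mathbb{R}^{N}}|u_{\varepsilon}^{+}|^{2p+2}\mathrm{d}x=\rho_{1}+t$, sharpens to the polynomial inequality $\rho_{1}^{p+1}\geq M_{1}^{p}(\rho_{1}+t)$. (c) Testing $(\ref{2.9})$ with $\widehat{\omega}_{l}$ and using $\widehat{\omega}_{l}\in\widehat{\mathbf{M}}_{l}$, $\widehat{I}_{l}(\widehat{\omega}_{l})=\widehat{\alpha}_{l}$ gives $S_{l}^{p+1}\leq\mu_{l}M_{l}^{p}$.

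Now fix $l$, put $g_{l}=\rho_{l}-M_{l}\geq0$ and $\delta_{l}=\sigma\min\{1,\tfrac12 S_{l}^{p+1}\mu_{l}^{-1}(M_{l}+\sigma)^{-p}\}$; by (c), $\delta_{l}=\tfrac{\sigma}{2}S_{l}^{p+1}\mu_{l}^{-1}(M_{l}+\sigma)^{-p}<\sigma$. If $g_{l}\geq\sigma$, then $g_{l}>\delta_{l}$. If $g_{l}<\sigma$, let $f(x)=x^{p+1}-M_{l}^{p}x-M_{l}^{p}\sigma$; it is strictly increasing on $[M_{l},\infty)$ with $f(M_{l})=-M_{l}^{p}\sigma<0\leq f(\rho_{l})$ (by (a),(b)), hence $\rho_{l}\geq x^{*}$ for the unique zero $x^{*}\in(M_{l},M_{l}+\sigma)$, and the mean value theorem gives $x^{*}-M_{l}=M_{l}^{p}\sigma/f'(\xi)$ with $\xi\in(M_{l},M_{l}+\sigma)$; since $f'(\xi)=(p+1)\xi^{p}-M_{l}^{p}<(p+1)(M_{l}+\sigma)^{p}$, and $p\leq1$ by the standing hypotheses, combining with (c),
\[
g_{l}\ \geq\ x^{*}-M_{l}\ >\ \frac{M_{l}^{p}\sigma}{(p+1)(M_{l}+\sigma)^{p}}\ \geq\ \frac{\sigma}{2}\cdot\frac{S_{l}^{p+1}}{\mu_{l}(M_{l}+\sigma)^{p}}\ =\ \delta_{l}.
\]
Thus $\rho_{l}-M_{l}>\delta_{l}$ in both cases, whence $\tfrac{p}{2(p+1)}(\rho_{1}+\rho_{2})>\widehat{\alpha}_{1}+\widehat{\alpha}_{2}+\tfrac{p}{2(p+1)}(\delta_{1}+\delta_{2})\geq\widehat{\alpha}_{1}+\widehat{\alpha}_{2}+\tfrac{p}{p+1}\min\{\delta_{1},\delta_{2}\}=\widehat{\alpha}_{1}+\widehat{\alpha}_{2}+\overline{\delta}$; with the displayed identity and $t>0$ this gives $(\ref{2.7})$. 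This calibration of elementary but fussy estimates to reproduce exactly the $\overline{\delta}$ of $(\ref{2.8})$ is the only real obstacle; everything else is bookkeeping.

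For $(\ref{2.10})$: set $D=\int_{\mathbb{R}^{N}}\beta(x)|u^{+}|^{q+1}|v^{+}|^{q+1}\mathrm{d}x\leq0$. If $D=0$, Lemma~\ref{L2.1} gives $\mu_{1}\int_{\mathbb{R}^{N}}|u^{+}|^{2p+2}\,\mu_{2}\int_{\mathbb{R}^{N}}|v^{+}|^{2p+2}\geq\bigl(\tfrac{2p+2}{p}\bigr)^{2}\varepsilon^{2N}\widehat{\alpha}_{1}\widehat{\alpha}_{2}$, so the left side of $(\ref{2.10})$ is $\geq4(p+1)^{2}\varepsilon^{2N}\widehat{\alpha}_{1}\widehat{\alpha}_{2}>\varepsilon^{2N}(4(p+1)^{2}\widehat{\alpha}_{1}\widehat{\alpha}_{2}-\sigma^{2})$. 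If $D<0$, then $\varepsilon^{-N}J_{\varepsilon}(u,v)\leq\widehat{\alpha}_{1}+\widehat{\alpha}_{2}+\overline{\delta}$ forces $|D|<\sigma\varepsilon^{N}$ by the contrapositive of $(\ref{2.7})$, while the strict form of Lemma~\ref{L2.1} gives $p^{2}\mu_{1}\mu_{2}\int_{\mathbb{R}^{N}}|u^{+}|^{2p+2}\int_{\mathbb{R}^{N}}|v^{+}|^{2p+2}>4(p+1)^{2}\varepsilon^{2N}\widehat{\alpha}_{1}\widehat{\alpha}_{2}$; subtracting $D^{2}<\sigma^{2}\varepsilon^{2N}$ gives $(\ref{2.10})$, whose right side is positive exactly because $\sigma<(2p+2)\sqrt{\widehat{\alpha}_{1}\widehat{\alpha}_{2}}$.
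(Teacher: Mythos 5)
Your proof is correct and carefully calibrated to reproduce the paper's constant $\overline{\delta}$, but it takes a genuinely different route at the step that produces the quantitative gap. Both arguments rescale and reduce to lower-bounding $\rho_1+\rho_2$ (you via the exact identity $\varepsilon^{-N}J_\varepsilon=\tfrac{p}{2(p+1)}(\rho_1+\rho_2)+\tfrac{p-q}{(p+1)(q+1)}t$, the paper via the one-sided inequality in its display (\ref{2.16})), and both extract the same information from the projection onto $\widehat{\mathbf{M}}_l$. Where you diverge: you encode that information as the single polynomial inequality $\rho_l^{p+1}\ge M_l^p(\rho_l+t)$ and apply the mean value theorem to $f(x)=x^{p+1}-M_l^p x-M_l^p\sigma$ to get $\rho_l-M_l>\delta_l$; the paper instead writes $\rho_l^\lambda=\widetilde{s}^{2p}\rho_l^\lambda+(1-\widetilde{s}^{2p})\rho_l^\lambda$, bounds the first piece below by $\tfrac{2p+2}{p}\widehat{\alpha}_l$ (using $p\le 1$, as you do) and the second by $\sigma\rho_l^\lambda/\bigl(\mu_l\int|\widetilde{u}^+|^{2p+2}\,\mathrm{d}x\bigr)$, and then runs a two-case dichotomy on whether $\bigl(\tfrac{2p+2}{p}\widehat{\alpha}_l+\sigma\bigr)^p\rho_l^\lambda<S_l^{p+1}\int|\widetilde{u}^+|^{2p+2}\,\mathrm{d}x$ for both $l$. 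The two dichotomies are structurally different (yours is componentwise on whether $g_l\ge\sigma$, the paper's is a joint Sobolev comparison), but both rest on the same identity $S_l^{p+1}\le\mu_l M_l^p$. A useful by-product of your calibration: since $\tfrac12 S_l^{p+1}/\bigl[\mu_l(M_l+\sigma)^p\bigr]<\tfrac12<1$, the outer ``$1$'' in (\ref{2.8}) can never be the minimum, so $\overline{\delta}$ may be stated without it (the paper's Case~I is what introduces the $1$, but it is always dominated). Your treatment of (\ref{2.10}) coincides with the paper's. One small writing point: in the MVT step the statement $\xi\in(M_l,M_l+\sigma)$ implicitly uses $x^*\le\rho_l<M_l+\sigma$, which holds only because you are inside the case $g_l<\sigma$; it would be worth saying that explicitly.
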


\begin{proof}
Assume $\left( u,v\right) \in \mathbf{N}_{\varepsilon }$ and $\sigma >0$
such that
\begin{equation}
\int_{\mathbb{R}^{N}}\beta (x)\left\vert u^{+}\right\vert ^{q+1}\left\vert
v^{+}\right\vert ^{q+1}\mathrm{d}x\leq -\sigma \varepsilon ^{N}.
\label{2.11}
\end{equation}%
Let $\widetilde{u}\left( x\right) =u\left( \varepsilon x\right) $ and $%
\widetilde{v}\left( x\right) =v\left( \varepsilon x\right) .$ Then
\begin{equation*}
\varepsilon ^{-N}J_{\varepsilon }(u,v)=\widetilde{J}_{\varepsilon }\left(
\widetilde{u},\widetilde{v}\right) ,
\end{equation*}%
where
\begin{eqnarray}
\widetilde{J}_{\varepsilon }\left( \widetilde{u},\widetilde{v}\right) &=&%
\frac{1}{2}\int_{\mathbb{R}^{N}}\left( |(-\triangle )^{\frac{\alpha }{2}}%
\widetilde{u}|^{2}+V_{1}\left( \varepsilon x\right) \widetilde{u}^{2}\right)
\mathrm{d}x+\frac{1}{2}\int_{\mathbb{R}^{N}}\left( |(-\triangle )^{\frac{%
\alpha }{2}}\widetilde{v}|^{2}+V_{2}\left( \varepsilon x\right) \widetilde{v}%
^{2}\right) \mathrm{d}x  \notag \\
&&-\frac{1}{2p+2}\left( \int_{\mathbb{R}^{N}}\mu _{1}\left\vert \widetilde{u}%
^{+}\right\vert ^{2p+2}+\int_{\mathbb{R}^{N}}\mu _{2}\left\vert \widetilde{v}%
^{+}\right\vert ^{2p+2}\right) \mathrm{d}x  \label{2.12} \\
&&-\frac{1}{q+1}\int_{\mathbb{R}^{N}}\beta \left( \varepsilon x\right)
\left\vert \widetilde{u}^{+}\right\vert ^{q+1}\left\vert \widetilde{v}%
^{+}\right\vert ^{q+1}\mathrm{d}x.  \notag
\end{eqnarray}%
Hence, by (\ref{2.11}), (\ref{2.12}) and $\left( u,v\right) \in \mathbf{N}%
_{\varepsilon }$, we have
\begin{equation}
\int_{\mathbb{R}^{N}}\beta \left( \varepsilon x\right) \left\vert \widetilde{%
u}^{+}\right\vert ^{q+1}\left\vert \widetilde{v}^{+}\right\vert ^{q+1}%
\mathrm{d}x\leq -\sigma ,  \label{2.13}
\end{equation}%
\begin{eqnarray}
\int_{\mathbb{R}^{N}}\left( \left\vert (-\triangle )^{\frac{\alpha }{2}}%
\widetilde{u}\right\vert ^{2}+\lambda _{1}\widetilde{u}^{2}\right) \mathrm{d}%
x &\leq &\int_{\mathbb{R}^{N}}\left( \left\vert (-\triangle )^{\frac{\alpha
}{2}}\widetilde{u}\right\vert ^{2}+V_{1}\left( \varepsilon x\right)
\widetilde{u}^{2}\right) \mathrm{d}x  \notag \\
&=&\mu _{1}\int_{\mathbb{R}^{N}}\left\vert \widetilde{u}^{+}\right\vert
^{2p+2}\mathrm{d}x+\int_{\mathbb{R}^{N}}\beta \left( \varepsilon x\right)
\left\vert \widetilde{u}^{+}\right\vert ^{q+1}\left\vert \widetilde{v}%
^{+}\right\vert ^{q+1}\mathrm{d}x,  \label{2.14}
\end{eqnarray}%
and
\begin{eqnarray}
\int_{\mathbb{R}^{N}}\left( \left\vert (-\triangle )^{\frac{\alpha }{2}}%
\widetilde{v}\right\vert ^{2}+\lambda _{2}\widetilde{v}^{2}\right) \mathrm{d}%
x &\leq &\int_{\mathbb{R}^{N}}\left( \left\vert (-\triangle )^{\frac{\alpha
}{2}}\widetilde{v}\right\vert ^{2}+V_{2}\left( \varepsilon x\right)
\widetilde{v}^{2}\right) \mathrm{d}x  \notag \\
&=&\mu _{2}\int_{\mathbb{R}^{N}}\left\vert \widetilde{v}^{+}\right\vert
^{2p+2}\mathrm{d}x+\int_{\mathbb{R}^{N}}\beta \left( \varepsilon x\right)
\left\vert \widetilde{u}^{+}\right\vert ^{q+1}\left\vert \widetilde{v}%
^{+}\right\vert ^{q+1}\mathrm{d}x.  \label{2.15}
\end{eqnarray}%
Moreover, it follows from (\ref{2.12}), (\ref{2.14}) and (\ref{2.15}) that%
\begin{eqnarray}
\widetilde{J}_{\varepsilon }\left( \widetilde{u},\widetilde{v}\right) &\geq &%
\frac{1}{2}\int_{\mathbb{R}^{N}}\left( |(-\triangle )^{\frac{\alpha }{2}}%
\widetilde{u}|^{2}+V_{1}\left( \varepsilon x\right) \widetilde{u}^{2}\right)
\mathrm{d}x+\frac{1}{2}\int_{\mathbb{R}^{N}}\left( |(-\triangle )^{\frac{%
\alpha }{2}}\widetilde{v}|^{2}+V_{2}\left( \varepsilon x\right) \widetilde{v}%
^{2}\right) \mathrm{d}x  \notag \\
&&-\frac{1}{2p+2}\int_{\mathbb{R}^{N}}\mu _{1}\left\vert \widetilde{u}%
^{+}\right\vert ^{2p+2}-\frac{1}{2p+2}\int_{\mathbb{R}^{N}}\mu
_{2}\left\vert \widetilde{v}^{+}\right\vert ^{2p+2}\mathrm{d}x  \notag \\
&&-\frac{1}{p+1}\int_{\mathbb{R}^{N}}\beta \left( \varepsilon x\right)
\left\vert \widetilde{u}^{+}\right\vert ^{q+1}\left\vert \widetilde{v}%
^{+}\right\vert ^{q+1}\mathrm{d}x  \label{2.16} \\
&=&\frac{p}{2p+2}\left[ \int_{\mathbb{R}^{N}}\left( \left\vert (-\triangle
)^{\frac{\alpha }{2}}\widetilde{u}\right\vert ^{2}+V_{1}\left( \varepsilon
x\right) \widetilde{u}^{2}\right) \mathrm{d}x+\int_{\mathbb{R}^{N}}\left(
\left\vert (-\triangle )^{\frac{\alpha }{2}}\widetilde{v}\right\vert
^{2}+V_{2}\left( \varepsilon x\right) \widetilde{v}^{2}\right) \mathrm{d}x%
\right] .  \notag
\end{eqnarray}%
Set
\begin{equation*}
\widetilde{s}=\left( \frac{\int_{\mathbb{R}^{N}}\left( |(-\triangle )^{\frac{%
\alpha }{2}}\widetilde{u}|^{2}+\lambda _{1}\widetilde{u}^{2}\right) \mathrm{d%
}x}{\mu _{1}\int_{\mathbb{R}^{N}}\left\vert \widetilde{u}^{+}\right\vert
^{2p+2}\mathrm{d}x}\right) ^{1/2p}\text{and}\quad \widetilde{t}=\left( \frac{%
\int_{\mathbb{R}^{N}}\left( |(-\triangle )^{\frac{\alpha }{2}}\widetilde{v}%
|^{2}+\lambda _{2}\widetilde{v}^{2}\right) \mathrm{d}x}{\mu _{2}\int_{%
\mathbb{R}^{N}}\left\vert \widetilde{v}^{+}\right\vert ^{2p+2}\mathrm{d}x}%
\right) ^{1/2p}.
\end{equation*}%
Then, by $\left( \ref{2.13}\right) -\left( \ref{2.15}\right) $, we have
\begin{equation}
\widetilde{s}^{2p}=\frac{\int_{\mathbb{R}^{N}}\left( |(-\triangle )^{\frac{%
\alpha }{2}}\widetilde{u}|^{2}+\lambda _{1}\widetilde{u}^{2}\right) \mathrm{d%
}x}{\mu _{1}\int_{\mathbb{R}^{N}}\left\vert \widetilde{u}^{+}\right\vert
^{2p+2}\mathrm{d}x}\leq 1+\frac{\int_{\mathbb{R}^{N}}\beta \left(
\varepsilon x\right) \left\vert \widetilde{u}^{+}\right\vert
^{q+1}\left\vert \widetilde{v}^{+}\right\vert ^{q+1}\mathrm{d}x}{\mu
_{1}\int_{\mathbb{R}^{N}}\left\vert \widetilde{u}^{+}\right\vert ^{2p+2}%
\mathrm{d}x}\leq 1-\frac{\sigma }{\mu _{1}\int_{\mathbb{R}^{N}}\left\vert
\widetilde{u}^{+}\right\vert ^{2p+2}\mathrm{d}x}  \label{2.17}
\end{equation}%
and
\begin{equation}
\widetilde{t}^{2p}=\frac{\int_{\mathbb{R}^{N}}\left( |(-\triangle )^{\frac{%
\alpha }{2}}\widetilde{v}|^{2}+\lambda _{2}\widetilde{v}^{2}\right) \mathrm{d%
}x}{\mu _{2}\int_{\mathbb{R}^{N}}\left\vert \widetilde{v}^{+}\right\vert
^{2p+2}\mathrm{d}x}\leq 1+\frac{\int_{\mathbb{R}^{N}}\beta \left(
\varepsilon x\right) \left\vert \widetilde{u}^{+}\right\vert
^{q+1}\left\vert \widetilde{v}^{+}\right\vert ^{q+1}\mathrm{d}x}{\mu
_{2}\int_{\mathbb{R}^{N}}\left\vert \widetilde{v}^{+}\right\vert ^{2p+2}%
\mathrm{d}x}\leq 1-\frac{\sigma }{\mu _{2}\int_{\mathbb{R}^{N}}\left\vert
\widetilde{v}^{+}\right\vert ^{2p+2}\mathrm{d}x}.  \label{2.18}
\end{equation}%
Besides, $\widetilde{s}\widetilde{u}\in \widehat{\mathbf{M}}_{1}$ and $%
\widetilde{t}\widetilde{v}\in \widehat{\mathbf{M}}_{2}$ for all $\varepsilon
>0$, $0<q\leq p\leq 1$ if $N\leq 4\alpha $ and $0<q\leq p\leq \frac{2\alpha
}{N-2\alpha }$ if $N>4\alpha $. It follows from (\ref{2.16}), (\ref{2.17})
and (\ref{2.18}) that
\begin{eqnarray}
\widetilde{J}_{\varepsilon }\left( \widetilde{u},\widetilde{v}\right) &\geq &%
\frac{p}{2p+2}\left[ \int_{\mathbb{R}^{N}}\left( \left\vert (-\triangle )^{%
\frac{\alpha }{2}}\widetilde{u}\right\vert ^{2}+\lambda _{1}\widetilde{u}%
^{2}\right) \mathrm{d}x+\int_{\mathbb{R}^{N}}\left( \left\vert (-\triangle
)^{\frac{\alpha }{2}}\widetilde{v}\right\vert ^{2}+\lambda _{2}\widetilde{v}%
^{2}\right) \mathrm{d}x\right]  \notag \\
&\geq &\frac{p}{2p+2}\left[ \widetilde{s}^{2p}\int_{\mathbb{R}^{N}}\left(
\left\vert (-\triangle )^{\frac{\alpha }{2}}\widetilde{u}\right\vert
^{2}+\lambda _{1}\widetilde{u}^{2}\right) \mathrm{d}x+\widetilde{t}%
^{2p}\int_{\mathbb{R}^{N}}\left( \left\vert (-\triangle )^{\frac{\alpha }{2}}%
\widetilde{v}\right\vert ^{2}+\lambda _{2}\widetilde{v}^{2}\right) \mathrm{d}%
x\right]  \notag \\
&&+\frac{p\sigma }{2p+2}\left[ \frac{\int_{\mathbb{R}^{N}}\left(
|(-\triangle )^{\frac{\alpha }{2}}\widetilde{u}|^{2}+\lambda _{1}\widetilde{u%
}^{2}\right) \mathrm{d}x}{\mu _{1}\int_{\mathbb{R}^{N}}\left\vert \widetilde{%
u}^{+}\right\vert ^{2p+2}\mathrm{d}x}+\frac{\int_{\mathbb{R}^{N}}\left(
|(-\triangle )^{\frac{\alpha }{2}}\widetilde{v}|^{2}+\lambda _{2}\widetilde{v%
}^{2}\right) \mathrm{d}x}{\mu _{2}\int_{\mathbb{R}^{N}}\left\vert \widetilde{%
v}^{+}\right\vert ^{2p+2}\mathrm{d}x}\right]  \label{2.19}
\end{eqnarray}%
\begin{eqnarray}
&\geq &\frac{p\sigma }{2p+2}\left[ \frac{\int_{\mathbb{R}^{N}}\left(
|(-\triangle )^{\frac{\alpha }{2}}\widetilde{u}|^{2}+\lambda _{1}\widetilde{u%
}^{2}\right) \mathrm{d}x}{\mu _{1}\int_{\mathbb{R}^{N}}\left\vert \widetilde{%
u}^{+}\right\vert ^{2p+2}\mathrm{d}x}+\frac{\int_{\mathbb{R}^{N}}\left(
|(-\triangle )^{\frac{\alpha }{2}}\widetilde{v}|^{2}+\lambda _{2}\widetilde{v%
}^{2}\right) \mathrm{d}x}{\mu _{2}\int_{\mathbb{R}^{N}}\left\vert \widetilde{%
v}^{+}\right\vert ^{2p+2}\mathrm{d}x}\right]  \notag \\
&&+\widehat{\alpha }_{1}+\widehat{\alpha }_{2}.  \notag
\end{eqnarray}%
Now we wish to claim (\ref{2.7}). Suppose both
\begin{equation*}
\left( \frac{2p+2}{p}\widehat{\alpha }_{1}+\sigma \right) ^{p}\int_{\mathbb{R%
}^{N}}\left( \left\vert (-\triangle )^{\frac{\alpha }{2}}\widetilde{u}%
\right\vert ^{2}+\lambda _{1}\widetilde{u}^{2}\right) \mathrm{d}%
x<S_{1}^{p+1}\int_{\mathbb{R}^{N}}\left\vert \widetilde{u}^{+}\right\vert
^{2p+2}\mathrm{d}x
\end{equation*}%
and
\begin{equation*}
\left( \frac{2p+2}{p}\widehat{\alpha }_{2}+\sigma \right) ^{p}\int_{\mathbb{R%
}^{N}}\left( \left\vert (-\triangle )^{\frac{\alpha }{2}}\widetilde{v}%
\right\vert ^{2}+\lambda _{2}\widetilde{v}^{2}\right) \mathrm{d}%
x<S_{2}^{p+1}\int_{\mathbb{R}^{N}}\left\vert \widetilde{v}^{+}\right\vert
^{2p+2}\mathrm{d}x.
\end{equation*}%
Then by (\ref{2.9}), we have
\begin{eqnarray*}
&&\left( \frac{2p+2}{p}\widehat{\alpha }_{1}+\sigma \right) ^{p}\int_{%
\mathbb{R}^{N}}\left( \left\vert (-\triangle )^{\frac{\alpha }{2}}\widetilde{%
u}\right\vert ^{2}+\lambda _{1}\widetilde{u}^{2}\right) \mathrm{d}%
x<S_{1}^{p+1}\int_{\mathbb{R}^{N}}\left\vert \widetilde{u}^{+}\right\vert
^{2p+2}\mathrm{d}x \\
&\leq &\left( \int_{\mathbb{R}^{N}}\left( \left\vert (-\triangle )^{\frac{%
\alpha }{2}}\widetilde{u}\right\vert ^{2}+\lambda _{1}\widetilde{u}%
^{2}\right) \mathrm{d}x\right) ^{p+1}
\end{eqnarray*}%
and
\begin{eqnarray*}
&&\left( \frac{2p+2}{p}\widehat{\alpha }_{2}+\sigma \right) ^{p}\int_{%
\mathbb{R}^{N}}\left( \left\vert (-\triangle )^{\frac{\alpha }{2}}\widetilde{%
v}\right\vert ^{2}+\lambda _{2}\widetilde{v}^{2}\right) \mathrm{d}%
x<S_{2}^{p+1}\int_{\mathbb{R}^{N}}\left\vert \widetilde{v}^{+}\right\vert
^{2p+2}\mathrm{d}x \\
&\leq &\left( \int_{\mathbb{R}^{N}}\left( \left\vert (-\triangle )^{\frac{%
\alpha }{2}}\widetilde{v}\right\vert ^{2}+\lambda _{2}\widetilde{v}%
^{2}\right) \mathrm{d}x\right) ^{p+1}.
\end{eqnarray*}%
Hence,
\begin{equation}
\int_{\mathbb{R}^{N}}\left( \left\vert (-\triangle )^{\frac{\alpha }{2}}%
\widetilde{u}\right\vert ^{2}+\lambda _{1}\widetilde{u}^{2}\right) \mathrm{d}%
x>\frac{2p+2}{p}\widehat{\alpha }_{1}+\sigma  \label{2.20}
\end{equation}%
and
\begin{equation}
\int_{\mathbb{R}^{N}}\left( \left\vert (-\triangle )^{\frac{\alpha }{2}}%
\widetilde{u}\right\vert ^{2}+\lambda _{2}\widetilde{v}^{2}\right) \mathrm{d}%
x>\frac{2p+2}{p}\widehat{\alpha }_{2}+\sigma .  \label{2.21}
\end{equation}%
Combining (\ref{2.16}), (\ref{2.20}) and (\ref{2.21}), we obtain
\begin{equation*}
\widetilde{J}_{\varepsilon }\left( \widetilde{u},\widetilde{v}\right) >%
\widehat{\alpha }_{1}+\widehat{\alpha }_{2}+\frac{p\sigma }{p+1},
\end{equation*}%
thus (\ref{2.7}) holds. On the other hand, suppose either
\begin{equation*}
\left( \frac{2p+2}{p}\widehat{\alpha }_{1}+\sigma \right) ^{p}\int_{\mathbb{R%
}^{N}}\left( \left\vert (-\triangle )^{\frac{\alpha }{2}}\widetilde{u}%
\right\vert ^{2}+\lambda _{1}\widetilde{u}^{2}\right) \mathrm{d}x\geq
S_{1}^{p+1}\int_{\mathbb{R}^{N}}\left\vert \widetilde{u}^{+}\right\vert
^{2p+2}\mathrm{d}x
\end{equation*}%
or
\begin{equation*}
\left( \frac{2p+2}{p}\widehat{\alpha }_{1}+\sigma \right) ^{p}\int_{\mathbb{R%
}^{N}}\left( \left\vert (-\triangle )^{\frac{\alpha }{2}}\widetilde{v}%
\right\vert ^{2}+\lambda _{2}\widetilde{v}^{2}\right) \mathrm{d}x\geq
S_{2}^{p+1}\int_{\mathbb{R}^{N}}\left\vert \widetilde{v}^{+}\right\vert
^{2p+2}\mathrm{d}x.
\end{equation*}%
Then by (\ref{2.19}), we obtain
\begin{eqnarray*}
\widetilde{J}_{\varepsilon }\left( \widetilde{u},\widetilde{v}\right) &\geq &%
\frac{p\sigma }{2p+2}\left[ \frac{\int_{\mathbb{R}^{N}}\left( |(-\triangle
)^{\frac{\alpha }{2}}\widetilde{u}|^{2}+\lambda _{1}\widetilde{u}^{2}\right)
\mathrm{d}x}{\mu _{1}\int_{\mathbb{R}^{N}}\left\vert \widetilde{u}%
^{+}\right\vert ^{2p+2}\mathrm{d}x}+\frac{\int_{\mathbb{R}^{N}}\left(
|(-\triangle )^{\frac{\alpha }{2}}\widetilde{v}|^{2}+\lambda _{2}\widetilde{v%
}^{2}\right) \mathrm{d}x}{\mu _{2}\int_{\mathbb{R}^{N}}\left\vert \widetilde{%
v}^{+}\right\vert ^{2p+2}\mathrm{d}x}\right] \\
&&+\widehat{\alpha }_{1}+\widehat{\alpha }_{2} \\
&>&\frac{p\sigma }{2p+2}\min \left\{ \frac{S_{1}^{p+1}}{\mu _{1}\left( \frac{%
2p+2}{p}\widehat{\alpha }_{1}+\sigma \right) ^{p}},\frac{S_{2}^{p+1}}{\mu
_{2}\left( \frac{2p+2}{p}\widehat{\alpha }_{2}+\sigma \right) ^{p}}\right\} +%
\widehat{\alpha }_{1}+\widehat{\alpha }_{2},
\end{eqnarray*}%
i.e.
\begin{equation}
\widetilde{J}_{\varepsilon }\left( \widetilde{u},\widetilde{v}\right) >%
\widehat{\alpha }_{1}+\widehat{\alpha }_{2}+\overline{\delta },  \label{2.22}
\end{equation}%
where $\overline{\delta }$ is defined in (\ref{2.8}). Therefore, by (\ref%
{2.22}), we may complete the proof of (\ref{2.7}). Since (\ref{2.7}) holds
for all $\left( u,v\right) \in \mathbf{N}_{\varepsilon }$ satisfying (\ref%
{2.13}), then we may conclude that
\begin{equation}
-\varepsilon ^{N}\sigma <\int_{\mathbb{R}^{N}}\beta (x)\left\vert
u^{+}\right\vert ^{q+1}\left\vert v^{+}\right\vert ^{q+1}\mathrm{d}x\leq 0,
\label{2.23}
\end{equation}%
for all $\left( u,v\right) \in \mathbf{N}_{\varepsilon }$ with $\varepsilon
^{-N}J_{\varepsilon }\left( u,v\right) \leq \widehat{\alpha }_{1}+\widehat{%
\alpha }_{2}+\overline{\delta }.$ Hence, by (\ref{2.23}) and Lemma~\ref{2.1}%
, we may obtain (\ref{2.10}) for all $\left( u,v\right) \in \mathbf{N}%
_{\varepsilon }$ with $\varepsilon ^{-N}J_{\varepsilon }\left( u,v\right)
\leq \widehat{\alpha }_{1}+\widehat{\alpha }_{2}+\overline{\delta }$,
provided $0<\sigma <(2p+2)\sqrt{\widehat{\alpha }_{1}\widehat{\alpha }_{2}}$%
. This completes the proof.
\end{proof}

To prove Theorems \ref{t1.1} and \ref{t1.2}, we need another lemma as
follows:

\begin{lemma}
\label{L2.3} Let $c_{\varepsilon }$ be as in (\ref{1.15}). Then $\varepsilon
^{-N}c_{\varepsilon }\geq \widehat{\alpha }_{1}+\widehat{\alpha }_{2}$ for $%
\varepsilon >0$ that is sufficiently small. Furthermore, if $\varepsilon
^{-N}c_{\varepsilon }=\widehat{\alpha }_{1}+\widehat{\alpha }_{2},$ then
problem $(P_{\varepsilon })$ does not have a least energy solution.
\end{lemma}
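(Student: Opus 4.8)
The plan is to exploit the very simple form that $J_\varepsilon$ takes on $\mathbf{N}_\varepsilon$. Write $\gamma(u,v):=\int_{\mathbb{R}^N}\beta(x)|u^+|^{q+1}|v^+|^{q+1}\,\mathrm{d}x$ and use the two defining identities of $\mathbf{N}_\varepsilon$ to substitute for $\mu_1\int_{\mathbb{R}^N}|u^+|^{2p+2}\,\mathrm{d}x$ and $\mu_2\int_{\mathbb{R}^N}|v^+|^{2p+2}\,\mathrm{d}x$ in (\ref{1.14}); this gives, for every $(u,v)\in\mathbf{N}_\varepsilon$,
\[
J_\varepsilon(u,v)=\frac{p}{2p+2}\bigl(\|u\|_{V_1}^2+\|v\|_{V_2}^2\bigr)+\Bigl(\frac{1}{p+1}-\frac{1}{q+1}\Bigr)\gamma(u,v).
\]
Since $0<q\le p$ yields $\frac{1}{p+1}-\frac{1}{q+1}=\frac{q-p}{(p+1)(q+1)}\le 0$ and $\beta\le 0$ yields $\gamma(u,v)\le 0$, the last term is nonnegative, so $J_\varepsilon(u,v)\ge\frac{p}{2p+2}(\|u\|_{V_1}^2+\|v\|_{V_2}^2)$. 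Combining this with the bounds $\frac{p}{2p+2}\|u\|_{V_1}^2\ge\varepsilon^N\widehat{\alpha}_1$ and $\frac{p}{2p+2}\|v\|_{V_2}^2\ge\varepsilon^N\widehat{\alpha}_2$ that are already contained in (\ref{2.1}) and (\ref{2.2}) of Lemma \ref{L2.1}, and adding, gives $\varepsilon^{-N}J_\varepsilon(u,v)\ge\widehat{\alpha}_1+\widehat{\alpha}_2$ for all $(u,v)\in\mathbf{N}_\varepsilon$, hence $\varepsilon^{-N}c_\varepsilon\ge\widehat{\alpha}_1+\widehat{\alpha}_2$. (The inequality in fact holds whenever $\mathbf{N}_\varepsilon\ne\emptyset$, which by the construction of Section~3 is ensured for $\varepsilon$ small, which is the role of the smallness hypothesis.)

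For the second assertion I would argue by contradiction: suppose $\varepsilon^{-N}c_\varepsilon=\widehat{\alpha}_1+\widehat{\alpha}_2$ and that $(P_\varepsilon)$ has a least energy solution $(u_0,v_0)\in\mathbf{N}_\varepsilon$, so $\varepsilon^{-N}J_\varepsilon(u_0,v_0)=\widehat{\alpha}_1+\widehat{\alpha}_2$. Evaluating the displayed identity at $(u_0,v_0)$ presents $\varepsilon^N(\widehat{\alpha}_1+\widehat{\alpha}_2)$ as the sum of three terms, $\frac{p}{2p+2}\|u_0\|_{V_1}^2$, $\frac{p}{2p+2}\|v_0\|_{V_2}^2$ and $(\frac{1}{p+1}-\frac{1}{q+1})\gamma(u_0,v_0)$, which are respectively $\ge\varepsilon^N\widehat{\alpha}_1$, $\ge\varepsilon^N\widehat{\alpha}_2$, $\ge 0$, and whose sum equals $\varepsilon^N(\widehat{\alpha}_1+\widehat{\alpha}_2)$. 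Consequently each term attains its bound; in particular $\|u_0\|_{V_1}^2=\frac{2p+2}{p}\varepsilon^N\widehat{\alpha}_1$, i.e. equality holds in the second inequality of (\ref{2.1}). By the strict-inequality part of Lemma \ref{L2.1}, that equality is incompatible with $\gamma(u_0,v_0)<0$; since $\gamma(u_0,v_0)\le 0$, we get $\gamma(u_0,v_0)=0$, hence $\beta(x)|u_0^+(x)|^{q+1}|v_0^+(x)|^{q+1}=0$ for a.e. $x\in\mathbb{R}^N$.

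It then remains to turn $\gamma(u_0,v_0)=0$ into a contradiction with $(D_4)$, which I regard as the only delicate point. Because $(u_0,v_0)$ is a nontrivial nonnegative solution of $(P_\varepsilon)$ and $\beta\le 0$, the nonlocal strong maximum principle applied to each equation gives $u_0>0$ and $v_0>0$ a.e., so $u_0^+=u_0>0$ and $v_0^+=v_0>0$ a.e. By $(D_4)$ there is a common minimum point $z_{1,1}=z_{2,1}$ with $\beta\le -c_0<0$ on $B_{r_0}(z_{1,1})$, a set of positive Lebesgue measure. Then $0=\gamma(u_0,v_0)\le\int_{B_{r_0}(z_{1,1})}\beta\,u_0^{q+1}v_0^{q+1}\,\mathrm{d}x\le -c_0\int_{B_{r_0}(z_{1,1})}u_0^{q+1}v_0^{q+1}\,\mathrm{d}x<0$, a contradiction; hence no least energy solution exists when $\varepsilon^{-N}c_\varepsilon=\widehat{\alpha}_1+\widehat{\alpha}_2$. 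The only ingredient not already present in the excerpt is the strict positivity of nontrivial nonnegative solutions of $(P_\varepsilon)$; this is standard for $(-\Delta)^\alpha$, so I expect it to require at most a short remark or a citation rather than a genuine obstacle — the load-bearing steps are the Nehari identity and the equality analysis through Lemma \ref{L2.1}.
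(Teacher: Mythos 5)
Your proof is correct and follows essentially the same route as the paper's: both hinge on the Nehari identity $J_\varepsilon=\frac{p}{2p+2}\|(u,v)\|_H^2+\bigl(\frac{1}{p+1}-\frac{1}{q+1}\bigr)\gamma(u,v)$ together with Lemma \ref{L2.1} for the lower bound, and on the fractional strong maximum principle (forcing $u_0,v_0>0$, hence $\gamma(u_0,v_0)<0$ under $(D_4)$) combined with the strict part of Lemma \ref{L2.1} for the nonexistence claim. The only cosmetic difference is that you pass through the intermediate equality analysis to first deduce $\gamma(u_0,v_0)=0$ and then contradict it, whereas the paper goes directly from $\gamma(u_0,v_0)<0$ to $\frac{p}{2p+2}\|(u_0,v_0)\|_H^2>\varepsilon^N(\widehat{\alpha}_1+\widehat{\alpha}_2)$; you are also somewhat more explicit than the paper in invoking $(D_4)$ to justify $\gamma(u_0,v_0)<0$.
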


\begin{proof}
By $q\leq p$ and Lemma~\ref{L2.1}, we have
\begin{equation*}
J_{\varepsilon }\left( u,v\right) \geq \frac{p}{2p+2}\left\Vert \left(
u,v\right) \right\Vert _{H}^{2}\geq \varepsilon ^{N}\left( \widehat{\alpha }%
_{1}+\widehat{\alpha }_{2}\right) \text{ for all }\left( u,v\right) \in
\mathbf{N}_{\varepsilon }\text{ and }\varepsilon >0.
\end{equation*}%
Hence, $\varepsilon ^{-N}c_{\varepsilon }\geq \widehat{\alpha }_{1}+\widehat{%
\alpha }_{2}$ for all $\varepsilon >0$. Now we want to prove that problem $%
\left( P_{\varepsilon }\right) $ does not have a least energy solution if $%
\varepsilon ^{-N}c_{\varepsilon }=\widehat{\alpha }_{1}+\widehat{\alpha }%
_{2} $. Argue by contradiction that there exists $\left( u_{0},v_{0}\right)
\in \mathbf{N}_{\varepsilon }$ that is a least energy solution of problem $%
\left( P_{\varepsilon }\right) $ such that
\begin{equation}
J_{\varepsilon }\left( u_{0},v_{0}\right) =c_{\varepsilon }=\varepsilon
^{N}\left( \widehat{\alpha }_{1}+\widehat{\alpha }_{2}\right) .  \label{2.24}
\end{equation}%
Hence by the maximum principle for the fractional Laplacian \cite{Sil}, $%
u_{0},v_{0}>0$ in $\mathbb{R}^{N}$. So
\begin{equation*}
\int_{\mathbb{R}^{N}}\beta(x) \left\vert u_{0}\right\vert ^{q+1}\left\vert
v_{0}\right\vert ^{q+1}\mathrm{d}x<0,
\end{equation*}%
and then by Lemma \ref{L2.1} and $q\leq p$, we obtain
\begin{equation*}
J_{\varepsilon }\left( u_{0},v_{0}\right) \geq \frac{p}{2p+2}\left\Vert
\left( u_{0},v_{0}\right) \right\Vert _{H}^{2}>\varepsilon ^{N}\left(
\widehat{\alpha }_{1}+\widehat{\alpha }_{2}\right)
\end{equation*}%
which would contradict (\ref{2.24}). Therefore, we may complete the proof.
\end{proof}

Now, we need to introduce a generalized barycenter map. By this we mean a
continuous map $\Phi :L^{2}\left( \mathbb{R}^{N}\right) \backslash \left\{
0\right\} \rightarrow \mathbb{R}^{N}$, which is equivariant with respect to
the action of the group of euclidian motions in $\mathbb{R}^{N}$, that is,
for every $x\in \mathbb{R}^{N}$, every orthogonal $N\times N$ matrix $A$ and
every $u\in L^{2}\left( \mathbb{R}^{N}\right) \backslash \left\{ 0\right\} $%
, one has
\begin{equation}
\Phi \left( \xi \ast u\right) =\xi +\Phi \left( u\right) \text{ and }\Phi
\left( u\circ A^{-1}\right) =A\Phi \left( u\right)  \label{1.11}
\end{equation}%
and
\begin{equation}
\Phi \left( u\circ \varepsilon \right) =\varepsilon ^{-1}\Phi \left(
u\right) \text{ for all }u\in L^{2}\left( \mathbb{R}^{N}\right) \backslash
\left\{ 0\right\} \text{ and }\varepsilon >0,  \label{1.12}
\end{equation}%
where $(\xi \ast u)(x)=u(x-\xi )$ and $(u\circ \varepsilon )(x)=u\left(
\varepsilon x\right) $. This property is easily built into the construction.
Indeed, if $\Phi _{1}$ satisfies (\ref{1.11}), then $\Phi $ defined by $%
|u|_{2}^{\frac{2}{N}}\Phi _{1}(u\circ |u|_{2}^{\frac{2}{N}})$ satisfies (\ref%
{1.11}) and (\ref{1.12}). Note that the map
\begin{equation*}
u\mapsto \frac{\int_{\mathbb{R}^{N}}xu^{2}\mathrm{d}x}{\int_{\mathbb{R}%
^{N}}u^{2}\mathrm{d}x}
\end{equation*}%
has the invariance properties (\ref{1.11}) and (\ref{1.12}), but it is
neither well defined on $L^{2}(\mathbb{R}^{N})\backslash \{0\}$ nor on $%
H^{\alpha }(\mathbb{R}^{N})\backslash \{0\}$.

The conditions $\left( D_{1}\right) $ and $\left( D_{2}\right) $ may imply $%
z_{1,i}^{\varepsilon }\rightarrow P_{1,i}$ and $z_{2,j}^{\varepsilon
}\rightarrow P_{2,j}$ as $\varepsilon \rightarrow 0$ (up to a subsequence),
where $P_{1,i}$ and $P_{2,j}$ are global minimum points of $V_{1}\left(
x\right) $ and $V_{2}\left( x\right) $, respectively. Generically, $%
(P_{1,i},P_{2,j})$ may not be equal to $(z_{1,i},z_{2,j}),$ because $%
V_{1}\left( x\right) $ and $V_{2}\left( x\right) $ may have multiple minimum
points. To find the positive solutions $(\widehat{u}_{\varepsilon ,i},%
\widehat{v}_{\varepsilon ,j})$ concentrating at $(z_{1,i},z_{2,j})$, we may
consider the minimization problem of $J_{\varepsilon }$ over the subset $%
N_{i,j}(\varepsilon )$ of $\mathbf{N}_{\varepsilon }$, where $%
N_{i,j_{2}}\left( \varepsilon \right) $ is defined by
\begin{equation*}
N_{i,j}\left( \varepsilon \right) =\left\{ \left( u,v\right) \in \mathbf{N}%
_{\varepsilon }:\Phi \left( u\right) \in C_{s}\left( z_{1,i}\right) \text{
and }\Phi \left( v\right) \in C_{s}\left( z_{2,j}\right) \right\} ,
\end{equation*}%
where $C_{s}\left( x\right) $ is a cube defined by $C_{s}\left( x\right) =%
\underset{n=1}{\overset{N}{\Pi }}\left( x_{n}-s,x_{n}+s\right) $ with the
boundary $\partial C_{s}\left( x\right) $ for $0<s<r_{0}$ ($r_{0}$ is given
in $(D_{2})$ and $\left( D_{4}\right) $), and $x=\left( x_{1},\ldots
,x_{N}\right) \in \mathbb{R}^{N}$ such that $\overline{C_{s}\left(
z_{1,i}\right) }\subset B_{r_{0}}(z_{1,i}),\overline{C_{s}\left(
z_{2,j}\right) }\subset B_{r_{0}}(z_{2,j})$ and
\begin{eqnarray*}
V_{1}\left( x\right) &>&V_{1}\left( z_{1,i}\right) \text{ for all }x\in
\partial C_{s}\left( z_{1,i}\right) \text{ and }i=1,2,\ldots ,k, \\
V_{2}\left( x\right) &>&V_{2}\left( z_{2,j}\right) \text{ for all }x\in
\partial C_{s}\left( z_{2,j}\right) \text{ and }j=1,2,\ldots ,\ell .
\end{eqnarray*}%
Moreover, by condition $\left( D_{4}\right) ,$%
\begin{equation}
\beta \left( x\right) \leq -c_{0}\text{ for all }x\in \overline{C_{s}\left(
z_{1,i}\right) }\text{ for all }1\leq i\leq m.  \label{2.34}
\end{equation}%
Next, we consider the boundary of $N_{i,j}\left( \varepsilon \right) $ as
follows:%
\begin{equation*}
O_{i,j}\left( \varepsilon \right) =\left\{ \left( u,v\right) \in \mathbf{N}%
_{\varepsilon }:\Phi \left( u\right) \in \partial C_{s}\left( z_{1,i}\right)
\text{ or }\Phi \left( v\right) \in \partial C_{s}\left( z_{2,j}\right)
\right\} .
\end{equation*}%
Now we consider the minimization of the functional $J_{\varepsilon }$ over $%
N_{i,j}\left( \varepsilon \right) $ and $O_{i,j}\left( \varepsilon \right) $%
, respectively, and denote the corresponding minima as
\begin{equation}
\gamma _{i,j}\left( \varepsilon \right) =\inf_{\left( u,v\right) \in
N_{i,j}\left( \varepsilon \right) }J_{\varepsilon }\left( u,v\right) \text{
and }\widetilde{\gamma }_{i,j}\left( \varepsilon \right) =\inf_{\left(
u,v\right) \in O_{i,j}\left( \varepsilon \right) }J_{\varepsilon }\left(
u,v\right) .  \label{2.25}
\end{equation}%
The upper bound of $\gamma _{i,j}\left( \varepsilon \right) $ is given by:

\begin{lemma}
\label{L2.4}For each $\delta >0$, there exists $\varepsilon _{\delta }>0$
such that for $\varepsilon \in \left( 0,\varepsilon _{\delta }\right) $,
there holds
\begin{equation*}
\varepsilon ^{-N}\gamma _{i,j}\left( \varepsilon \right) <\widehat{\alpha }%
_{1}+\widehat{\alpha }_{2}+\delta \text{ for all }1\leq i\leq k\text{ and }%
1\leq j\leq \ell.
\end{equation*}
\end{lemma}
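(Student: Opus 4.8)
The plan is to produce, for every pair $(i,j)$, an explicit competitor in $N_{i,j}\left( \varepsilon \right) $ assembled from the ground states $\widehat{\omega}_{1},\widehat{\omega}_{2}$ of $(\widehat{E}_{1}),(\widehat{E}_{2})$, localized near $z_{1,i}$ and $z_{2,j}$, to project it onto $\mathbf{N}_{\varepsilon }$, and to estimate its energy. It is convenient to pass to the rescaled problem $(\widetilde{P}_{\varepsilon })$ and the functional $\widetilde{J}_{\varepsilon }$ of (\ref{2.12}), recalling that $\varepsilon ^{-N}J_{\varepsilon }(u,v)=\widetilde{J}_{\varepsilon }(\widetilde{u},\widetilde{v})$ with $\widetilde{u}(z)=u(\varepsilon z)$, and the equivariance properties (\ref{1.11})--(\ref{1.12}) of $\Phi $ together with $\Phi (\lambda u)=\Phi (u)$ for $\lambda >0$, which can also be built into the construction of $\Phi $. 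Fix $\delta >0$ and a large parameter $R>0$, to be chosen below, a cut-off $\eta \in C_{c}^{\infty }(\mathbb{R}^{N})$ with $\eta \equiv 1$ on $B_{1}(0)$ and $\mathrm{supp}\,\eta \subset B_{2}(0)$, and $\rho >0$ so small that $\overline{B_{2\rho }(z_{1,i})}\subset B_{r_{0}}(z_{1,i})$ and $\overline{B_{2\rho }(z_{2,j})}\subset B_{r_{0}}(z_{2,j})$. In the rescaled variable set
\[
\widetilde{u}_{\varepsilon }(z)=\eta \!\left( \frac{\varepsilon z-z_{1,i}}{\rho }\right) \widehat{\omega}_{1}\!\left( z-\frac{z_{1,i}}{\varepsilon }\right) ,\qquad \widetilde{v}_{\varepsilon }(z)=\eta \!\left( \frac{\varepsilon z-z_{2,j}}{\rho }\right) \widehat{\omega}_{2}\!\left( z-\frac{z_{2,j}}{\varepsilon }-Re_{1}\right) ,
\]
where $e_{1}$ is the first coordinate unit vector, so that $(u_{\varepsilon },v_{\varepsilon }):=(\widetilde{u}_{\varepsilon }(\cdot /\varepsilon ),\widetilde{v}_{\varepsilon }(\cdot /\varepsilon ))$ is a pair of positive functions (hence it lies in $\mathbf{T}$) which, up to the cut-off, are translates and $\varepsilon $-dilations of $\widehat{\omega}_{1},\widehat{\omega}_{2}$ centred near $z_{1,i}$ and $z_{2,j}$; the shift $Re_{1}$ is inserted only to separate the two components in the case $z_{1,i}=z_{2,j}$ allowed by $(D_{4})$.

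First I would estimate $\widetilde{J}_{\varepsilon }(\widetilde{u}_{\varepsilon },\widetilde{v}_{\varepsilon })$. By the sharp decay $\widehat{\omega}_{l}(x)\le C(1+|x|)^{-(N+2\alpha )}$ from Proposition \ref{P1.1}(ii), a by now standard computation for fractional norms (the cut-off varies on the large scale $\rho /\varepsilon $, so its commutator with $(-\triangle )^{\alpha /2}$ is negligible; cf.\ Propositions 3.4 and 3.6 of \cite{DPV}) gives $\int_{\mathbb{R}^{N}}|(-\triangle )^{\alpha /2}\widetilde{u}_{\varepsilon }|^{2}\mathrm{d}z\to \int_{\mathbb{R}^{N}}|(-\triangle )^{\alpha /2}\widehat{\omega}_{1}|^{2}\mathrm{d}z$ and $\int_{\mathbb{R}^{N}}|\widetilde{u}_{\varepsilon }^{+}|^{2p+2}\mathrm{d}z\to \int_{\mathbb{R}^{N}}\widehat{\omega}_{1}^{2p+2}\mathrm{d}z$, and analogously for $\widetilde{v}_{\varepsilon }$ (a translation changes neither quantity). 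By the uniform continuity of $V_{1},V_{2}$ in $(D_{1})$, the identities $V_{1}(z_{1,i})=\lambda _{1}$, $V_{2}(z_{2,j})=\lambda _{2}$ of $(D_{2})$, and the $L^{2}$-decay of $\widehat{\omega}_{l}$, splitting the integrals into a fixed bounded region and a tail yields $\int_{\mathbb{R}^{N}}V_{1}(\varepsilon z)\widetilde{u}_{\varepsilon }^{2}\mathrm{d}z\to \lambda _{1}\int_{\mathbb{R}^{N}}\widehat{\omega}_{1}^{2}\mathrm{d}z$ and $\int_{\mathbb{R}^{N}}V_{2}(\varepsilon z)\widetilde{v}_{\varepsilon }^{2}\mathrm{d}z\to \lambda _{2}\int_{\mathbb{R}^{N}}\widehat{\omega}_{2}^{2}\mathrm{d}z$. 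Finally, since $\beta $ is bounded,
\[
0\le -\int_{\mathbb{R}^{N}}\beta (\varepsilon z)|\widetilde{u}_{\varepsilon }^{+}|^{q+1}|\widetilde{v}_{\varepsilon }^{+}|^{q+1}\mathrm{d}z\le |\beta |_{\infty }E(R)+o_{\varepsilon }(1),\qquad E(R):=\int_{\mathbb{R}^{N}}\widehat{\omega}_{1}(w)^{q+1}\widehat{\omega}_{2}(w-Re_{1})^{q+1}\mathrm{d}w,
\]
and $E(R)\to 0$ as $R\to \infty $ by dominated convergence and the decay of $\widehat{\omega}_{l}$ (if in addition $z_{1,i}\ne z_{2,j}$, the left-hand side is already $o_{\varepsilon }(1)$). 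Using $\langle \widehat{I}_{l}^{\prime }(\widehat{\omega}_{l}),\widehat{\omega}_{l}\rangle =0$, i.e.\ $\int_{\mathbb{R}^{N}}(|(-\triangle )^{\alpha /2}\widehat{\omega}_{l}|^{2}+\lambda _{l}\widehat{\omega}_{l}^{2})\mathrm{d}z=\mu _{l}\int_{\mathbb{R}^{N}}\widehat{\omega}_{l}^{2p+2}\mathrm{d}z$, and $\widehat{I}_{l}(\widehat{\omega}_{l})=\widehat{\alpha}_{l}$, I get $\widetilde{J}_{\varepsilon }(\widetilde{u}_{\varepsilon },\widetilde{v}_{\varepsilon })=\widehat{\alpha}_{1}+\widehat{\alpha}_{2}+O(E(R))+o_{\varepsilon }(1)$.

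Next I would project onto $\mathbf{N}_{\varepsilon }$. Arguing as in \cite{LW}, there exist $s_{\varepsilon },t_{\varepsilon }>0$ such that $(s_{\varepsilon }\widetilde{u}_{\varepsilon },t_{\varepsilon }\widetilde{v}_{\varepsilon })$ solves the two coupled Nehari equations defining the rescaled $\mathbf{N}_{\varepsilon }$: solvability follows from a Poincar\'{e}--Miranda (degree) argument on a large box in the $(s,t)$-plane, the relevant boundary sign conditions being a consequence of $\beta \le 0$, $0<q\le p\le 1$ (resp.\ $q\le p<\frac{2\alpha }{N-2\alpha }$) and the sign of the leading powers. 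Because the coupling integrals are $O(E(R))+o_{\varepsilon }(1)$, the pair $(s_{\varepsilon },t_{\varepsilon })$ is a vanishing perturbation of the unique solution $(1,1)$ of the decoupled equations $s^{2p}\mu _{1}\int_{\mathbb{R}^{N}}\widehat{\omega}_{1}^{2p+2}=\int_{\mathbb{R}^{N}}(|(-\triangle )^{\alpha /2}\widehat{\omega}_{1}|^{2}+\lambda _{1}\widehat{\omega}_{1}^{2})$ and its analogue for $t$; hence $|s_{\varepsilon }-1|+|t_{\varepsilon }-1|=O(E(R))+o_{\varepsilon }(1)$, and since $(s,t)\mapsto \widetilde{J}_{\varepsilon }(s\widetilde{u}_{\varepsilon },t\widetilde{v}_{\varepsilon })$ is $C^{1}$ with derivatives bounded uniformly near $(1,1)$, $\widetilde{J}_{\varepsilon }(s_{\varepsilon }\widetilde{u}_{\varepsilon },t_{\varepsilon }\widetilde{v}_{\varepsilon })=\widetilde{J}_{\varepsilon }(\widetilde{u}_{\varepsilon },\widetilde{v}_{\varepsilon })+O(E(R))+o_{\varepsilon }(1)=\widehat{\alpha}_{1}+\widehat{\alpha}_{2}+O(E(R))+o_{\varepsilon }(1)$. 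Moreover $\Phi (s_{\varepsilon }u_{\varepsilon })=\Phi (u_{\varepsilon })$ and, by (\ref{1.11})--(\ref{1.12}) and the concentration of $u_{\varepsilon }$ at $z_{1,i}$, $\Phi (u_{\varepsilon })\to z_{1,i}$; similarly $\Phi (t_{\varepsilon }v_{\varepsilon })=\Phi (v_{\varepsilon })\to z_{2,j}$ (the shift $\varepsilon Re_{1}\to 0$), so $(s_{\varepsilon }u_{\varepsilon },t_{\varepsilon }v_{\varepsilon })\in N_{i,j}\left( \varepsilon \right) $ for $\varepsilon $ small. Therefore
\[
\varepsilon ^{-N}\gamma _{i,j}\left( \varepsilon \right) \le \varepsilon ^{-N}J_{\varepsilon }(s_{\varepsilon }u_{\varepsilon },t_{\varepsilon }v_{\varepsilon })=\widetilde{J}_{\varepsilon }(s_{\varepsilon }\widetilde{u}_{\varepsilon },t_{\varepsilon }\widetilde{v}_{\varepsilon })=\widehat{\alpha}_{1}+\widehat{\alpha}_{2}+O(E(R))+o_{\varepsilon }(1);
\]
choosing first $R=R(\delta )$ so that the $O(E(R))$ term is $<\delta /2$, then $\varepsilon _{\delta }>0$ so that the $o_{\varepsilon }(1)$ term is $<\delta /2$ for $\varepsilon \in (0,\varepsilon _{\delta })$, and taking the least such $\varepsilon _{\delta }$ over the finitely many pairs $(i,j)$, the claimed bound follows.

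The main obstacle is the fractional-norm convergence in the second step together with the Nehari projection: unlike $\int |\nabla u|^{2}$, the Gagliardo seminorm $\int_{\mathbb{R}^{N}}\int_{\mathbb{R}^{N}}|u(x)-u(y)|^{2}|x-y|^{-N-2\alpha }\mathrm{d}x\mathrm{d}y$ does not localize under multiplication by a cut-off, so one must exploit the precise decay rate $\widehat{\omega}_{l}(x)\sim |x|^{-(N+2\alpha )}$ to absorb the long-range interaction terms; and one has to control the existence and the $\varepsilon \to 0$ behaviour of the projection $(s_{\varepsilon },t_{\varepsilon })$ uniformly in the regime where the two components genuinely overlap, which is exactly why the separating translation $Re_{1}$ is introduced.
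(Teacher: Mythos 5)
Your proposal is correct and follows the same general strategy as the paper: build a competitor in $N_{i,j}(\varepsilon)$ from translated, rescaled, cut-off copies of the ground states $\widehat{\omega}_1,\widehat{\omega}_2$, project onto the Nehari constraint via scalars $(s_\varepsilon,t_\varepsilon)\to(1,1)$, verify the barycenter condition, and pass to the limit in the energy. The one substantive difference is the treatment of the coupling term $\int\beta\,|u^+|^{q+1}|v^+|^{q+1}$: the paper chooses the $\varepsilon$-dependent shift $x^\varepsilon=\tfrac{\sqrt{\varepsilon}}{2}e$ together with a cut-off $\psi_\varepsilon$ supported in a ball of radius $\tfrac{1}{3\sqrt{\varepsilon}}$, so that the two test functions have \emph{disjoint supports} for small $\varepsilon$ and the coupling integral is identically zero (equation (\ref{2.27})), giving the single limit $\varepsilon^{-N}J_\varepsilon\to\widehat{\alpha}_1+\widehat{\alpha}_2$ directly; whereas you use a fixed translation $Re_1$ in the rescaled variable, accept overlap in the case $z_{1,i}=z_{2,j}$, and control the coupling by the polynomial decay of $\widehat{\omega}_l$, getting $O(E(R))+o_\varepsilon(1)$ and closing with the double limit $R\to\infty$, $\varepsilon\to 0$. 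The paper's disjoint-support trick is marginally cleaner here and also sets up the $S^{N-1}$-parametrized family $F_\varepsilon^{(i,j)}(e)$ reused in the category argument of Section 4, but for the purposes of Lemma \ref{L2.4} alone both routes are sound.
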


\begin{proof}
First, we define test functions $u_{\varepsilon ,i}$ and $v_{\varepsilon ,j}$
by
\begin{eqnarray}
&&\left( u_{\varepsilon ,i}\left( x\right) ,v_{\varepsilon ,j}\left(
x\right) \right)  \label{2.26} \\
&=&\left( \widehat{\omega }_{1}\left( \frac{x-z_{1,i}+x^{\varepsilon }}{%
\varepsilon }\right) \psi _{\varepsilon }\left( \frac{x-z_{1,i}+x^{%
\varepsilon }}{\varepsilon }\right) ,\widehat{\omega }_{2}\left( \frac{%
x-z_{2,j}-x^{\varepsilon }}{\varepsilon }\right) \psi _{\varepsilon }\left(
\frac{x-z_{2,j}-x^{\varepsilon }}{\varepsilon }\right) \right) \,,  \notag
\end{eqnarray}%
where $x^{\varepsilon }=\frac{\sqrt{\varepsilon }}{2}e$, $e\in
S^{N-1}=\left\{ x\in \mathbb{R}^{N}:\left\vert x\right\vert =1\right\} $ and
$\widehat{\omega }_{l}$ is the unique positive radial solution of Eq. $%
\left( \widehat{E}_{l}\right) $ for $l=1,2.$ Notice that $\widehat{I}%
_{l}\left( \omega _{l}\right) =\widehat{\alpha }_{l}$ for $l=1,2.$ Moreover,
for $0<\varepsilon <1$, the function $\psi _{\varepsilon }\in C^{1}\left(
\mathbb{R}^{N},\left[ 0,1\right] \right) $ with compact support satisfies
\begin{equation*}
\psi _{\varepsilon }\left( x\right) =\left\{
\begin{array}{ll}
1, & \left\vert x\right\vert <\frac{1}{3\sqrt{\varepsilon }}-1, \\
0, & \left\vert x\right\vert >\frac{1}{3\sqrt{\varepsilon }},%
\end{array}%
\right.
\end{equation*}%
and $\left\vert \nabla \psi _{\varepsilon }\right\vert \leq 2$ in $\mathbb{R}%
^{N}$. Obviously,
\begin{equation}
\int_{\mathbb{R}^{N}}\beta(x) \left\vert u_{\varepsilon ,i}^{+}\right\vert
^{q+1}\left\vert v_{\varepsilon ,j}^{+}\right\vert ^{q+1}\mathrm{d}x=0\quad
\text{for }\varepsilon \text{ sufficiently small.}  \label{2.27}
\end{equation}%
Then by (\ref{2.26}), (\ref{2.27}) and Lemma~3.2 of~\cite{lw1}, it is easy
to find two positive numbers $t_{\varepsilon,i },s_{\varepsilon, j }$ such
that $\left( t_{\varepsilon,i }u_{\varepsilon ,i},s_{\varepsilon,j
}v_{\varepsilon ,j}\right) \in \mathbf{N}_{\varepsilon }$ when $\varepsilon $
is sufficiently small and $\left( t_{\varepsilon,i },s_{\varepsilon,j
}\right) \rightarrow \left( 1,1\right) $ as $\varepsilon \rightarrow 0^{+},$
uniformly for $e\in S^{N-1}$. Moreover, by (\ref{1.11}), (\ref{1.12}) and $%
\left( t_{\varepsilon,i },s_{\varepsilon,j }\right) \rightarrow (1,1),$ as $%
\varepsilon \rightarrow 0^{+}$, we have
\begin{eqnarray*}
\Phi \left( t_{\varepsilon,i }u_{\varepsilon ,i}\right)
&=&z_{1,i}-x^{\varepsilon }+\varepsilon \Phi \left( \widehat{\omega }%
_{1}\psi _{\varepsilon }\right) +o_{\varepsilon }\left( 1\right) \\
&=&z_{1,i}+o_{\varepsilon }\left( 1\right) \,.
\end{eqnarray*}%
Similarly, $\Phi \left( s_{\varepsilon,j }v_{\varepsilon ,j}\right)
=z_{2,j}+o_{\varepsilon }\left( 1\right) .$ Thus, $\Phi \left(
t_{\varepsilon,i }u_{\varepsilon ,i}\right) \in C_{s}\left( z_{1,i}\right) $
and $\Phi \left( s_{\varepsilon,j }v_{\varepsilon ,j}\right) \in C_{s}\left(
z_{2,j}\right) $ for $\varepsilon $ sufficiently small, and $\left(
t_{\varepsilon,i }u_{\varepsilon ,i},s_{\varepsilon,j }v_{\varepsilon
,j}\right) \in N_{i,j}\left( \varepsilon \right) $. On the other hand, by (%
\ref{2.26}), (\ref{2.27}) and $\left( t_{\varepsilon,i },s_{\varepsilon,j
}\right) \rightarrow (1,1)$ as $\varepsilon \rightarrow 0^{+}$, it is easy
to check that
\begin{equation}
\varepsilon ^{-N}J_{\varepsilon }\left( t_{\varepsilon,i }u_{\varepsilon
,i},s_{\varepsilon,j }v_{\varepsilon ,j}\right) \rightarrow \widehat{\alpha }%
_{1}+\widehat{\alpha }_{2}\text{ as }\varepsilon \rightarrow 0^{+}\text{
uniformly in }e\in S^{N-1}.  \label{2.28}
\end{equation}%
Here we have used the facts that $V_{1}(z_{1,i})=\lambda _{1}$ and $%
V_{2}(z_{2,j})=\lambda _{2}$. Therefore, by (\ref{2.25}) and (\ref{2.28}),
we may complete the proof of Lemma~\ref{L2.4}.
\end{proof}

On the other hand, we may describe the lower bound of $\widetilde{\gamma }%
\left( \varepsilon \right) $ as follows:

\begin{lemma}
\label{L2.5}There exist positive numbers $\widetilde{\delta }$ and $%
\varepsilon _{\widetilde{\delta }}$ such that for every $\varepsilon \in
\left( 0,\varepsilon _{\widetilde{\delta }}\right) $
\begin{equation*}
\varepsilon ^{-N}\widetilde{\gamma }_{i,j}\left( \varepsilon \right) >%
\widehat{\alpha }_{1}+\widehat{\alpha }_{2}+\widetilde{\delta }\,\text{for
all }1\leq i\leq k\text{ and }1\leq j\leq \ell .
\end{equation*}
\end{lemma}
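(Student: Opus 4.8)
The plan is to argue by contradiction, combining the rescaling $x=\varepsilon z$ with a concentration--compactness analysis of the scalar minimization problems for $\widehat{\alpha }_{1}$ and $\widehat{\alpha }_{2}$. Suppose the assertion fails. Since $\widetilde{\gamma }_{i,j}(\varepsilon )\geq c_{\varepsilon }\geq \varepsilon ^{N}(\widehat{\alpha }_{1}+\widehat{\alpha }_{2})$ for $\varepsilon $ small by Lemma \ref{L2.3}, a routine diagonal argument produces $\varepsilon _{n}\rightarrow 0$ and $(u_{n},v_{n})\in O_{i,j}(\varepsilon _{n})$ with $\varepsilon _{n}^{-N}J_{\varepsilon _{n}}(u_{n},v_{n})\rightarrow \widehat{\alpha }_{1}+\widehat{\alpha }_{2}$. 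Put $\widetilde{u}_{n}(x)=u_{n}(\varepsilon _{n}x)$ and $\widetilde{v}_{n}(x)=v_{n}(\varepsilon _{n}x)$, so that $\widetilde{J}_{\varepsilon _{n}}(\widetilde{u}_{n},\widetilde{v}_{n})=\varepsilon _{n}^{-N}J_{\varepsilon _{n}}(u_{n},v_{n})\rightarrow \widehat{\alpha }_{1}+\widehat{\alpha }_{2}$; passing to a subsequence we may assume $\Phi (u_{n})\in \partial C_{s}(z_{1,i})$ for every $n$ (the alternative $\Phi (v_{n})\in \partial C_{s}(z_{2,j})$ is treated identically, with $\widetilde{u}_{n},V_{1},z_{1,i}$ replaced by $\widetilde{v}_{n},V_{2},z_{2,j}$).

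The first step is to decouple the energies. For a bounded potential $W$ write $\Vert w\Vert _{W}^{2}=\int_{\mathbb{R}^{N}}(|(-\triangle )^{\alpha /2}w|^{2}+W(x)w^{2})\,\mathrm{d}x$, and let $V_{l}(\varepsilon _{n}\cdot )$ as a subscript denote the potential $x\mapsto V_{l}(\varepsilon _{n}x)$. Arguing as in (\ref{2.14})--(\ref{2.16}), and noting as in (\ref{2.17}) that the scalars $\widetilde{s}_{n},\widetilde{t}_{n}$ projecting $\widetilde{u}_{n},\widetilde{v}_{n}$ onto $\widehat{\mathbf{M}}_{1},\widehat{\mathbf{M}}_{2}$ satisfy $\widetilde{s}_{n},\widetilde{t}_{n}\in (0,1]$ (here $\beta \leq 0$ and $q\leq p$), one obtains
\begin{equation*}
\widehat{\alpha }_{1}+\widehat{\alpha }_{2}+o_{\varepsilon }(1)=\widetilde{J}_{\varepsilon _{n}}(\widetilde{u}_{n},\widetilde{v}_{n})\geq \frac{p}{2p+2}\big(\Vert \widetilde{u}_{n}\Vert _{V_{1}(\varepsilon _{n}\cdot )}^{2}+\Vert \widetilde{v}_{n}\Vert _{V_{2}(\varepsilon _{n}\cdot )}^{2}\big)\geq \frac{p}{2p+2}\big(\Vert \widetilde{u}_{n}\Vert _{\lambda _{1}}^{2}+\Vert \widetilde{v}_{n}\Vert _{\lambda _{2}}^{2}\big)\geq \widehat{\alpha }_{1}+\widehat{\alpha }_{2},
\end{equation*}
where the second inequality uses $V_{l}(\varepsilon _{n}x)\geq \lambda _{l}$ and the last uses $\widehat{\alpha }_{1}\leq \widehat{I}_{1}(\widetilde{s}_{n}\widetilde{u}_{n})\leq \frac{p}{2p+2}\Vert \widetilde{u}_{n}\Vert _{\lambda _{1}}^{2}$ (and its analogue for $\widetilde{v}_{n}$). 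Hence every inequality becomes an asymptotic equality, which yields that $\{\widetilde{u}_{n}\}$ is bounded in $H^{\alpha }(\mathbb{R}^{N})$, that $\frac{p}{2p+2}\Vert \widetilde{u}_{n}\Vert _{\lambda _{1}}^{2}\rightarrow \widehat{\alpha }_{1}$ and $\widetilde{s}_{n}\rightarrow 1$, and — crucially —
\begin{equation*}
\int_{\mathbb{R}^{N}}\big(V_{1}(\varepsilon _{n}x)-\lambda _{1}\big)\widetilde{u}_{n}^{2}\,\mathrm{d}x\rightarrow 0 .
\end{equation*}
In particular $\{\widetilde{s}_{n}\widetilde{u}_{n}\}\subset \widehat{\mathbf{M}}_{1}$ is a minimizing sequence for $\widehat{\alpha }_{1}$.

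Next I would run the concentration--compactness principle in $H^{\alpha }(\mathbb{R}^{N})$ on $\{\widetilde{s}_{n}\widetilde{u}_{n}\}$: vanishing is excluded since the rescaled Lemma \ref{L2.1} gives $\mu _{1}\int_{\mathbb{R}^{N}}|\widetilde{u}_{n}^{+}|^{2p+2}\,\mathrm{d}x\geq \frac{2p+2}{p}\widehat{\alpha }_{1}>0$, and dichotomy is excluded because $\widehat{\omega }_{1}$ is, up to translation, the unique minimizer for $\widehat{\alpha }_{1}$ (Proposition \ref{P1.1} and \cite{FLS}), so $\widehat{\alpha }_{1}$ is strictly subadditive. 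Thus, along a subsequence, there are $y_{n}\in \mathbb{R}^{N}$ with $\widetilde{s}_{n}\widetilde{u}_{n}(\cdot +y_{n})\rightarrow \widehat{\omega }_{1}$ strongly in $H^{\alpha }(\mathbb{R}^{N})$ (after first replacing $\widetilde{u}_{n}$ by its positive part, which is harmless since the negative parts tend to $0$), hence $\widetilde{u}_{n}(\cdot +y_{n})\rightarrow \widehat{\omega }_{1}$ in $H^{\alpha }$, and in $L^{2}$, as $\widetilde{s}_{n}\rightarrow 1$. By the continuity of $\Phi $ on $L^{2}(\mathbb{R}^{N})\setminus \{0\}$ and the equivariance (\ref{1.11}), $\Phi (\widetilde{u}_{n})=y_{n}+\Phi (\widetilde{u}_{n}(\cdot +y_{n}))=y_{n}+\Phi (\widehat{\omega }_{1})+o_{\varepsilon }(1)$, while (\ref{1.12}) gives $\Phi (\widetilde{u}_{n})=\varepsilon _{n}^{-1}\Phi (u_{n})$; combining these, $\varepsilon _{n}y_{n}=\Phi (u_{n})+\varepsilon _{n}\big(o_{\varepsilon }(1)-\Phi (\widehat{\omega }_{1})\big)=\Phi (u_{n})+o_{\varepsilon }(1)$. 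Since $\Phi (u_{n})$ lies in the compact set $\partial C_{s}(z_{1,i})$, we may assume $\varepsilon _{n}y_{n}\rightarrow \bar{z}\in \partial C_{s}(z_{1,i})$, and the defining property of $C_{s}(z_{1,i})$ forces $V_{1}(\bar{z})>V_{1}(z_{1,i})=\lambda _{1}$.

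Finally, the contradiction comes from the potential term: substituting $x\mapsto x+y_{n}$,
\begin{equation*}
\int_{\mathbb{R}^{N}}\big(V_{1}(\varepsilon _{n}x)-\lambda _{1}\big)\widetilde{u}_{n}^{2}\,\mathrm{d}x=\int_{\mathbb{R}^{N}}\big(V_{1}(\varepsilon _{n}y_{n}+\varepsilon _{n}x)-\lambda _{1}\big)\widetilde{u}_{n}(x+y_{n})^{2}\,\mathrm{d}x,
\end{equation*}
and since $V_{1}$ is uniformly continuous by $(D_{1})$ we have $V_{1}(\varepsilon _{n}y_{n}+\varepsilon _{n}x)\rightarrow V_{1}(\bar{z})$ uniformly on each ball $B^{N}(0,R)$; together with $\widetilde{u}_{n}(\cdot +y_{n})\rightarrow \widehat{\omega }_{1}$ in $L^{2}(\mathbb{R}^{N})$ and $V_{1}-\lambda _{1}\geq 0$ this gives, for every fixed $R>0$,
\begin{equation*}
\liminf_{n\rightarrow \infty }\int_{\mathbb{R}^{N}}\big(V_{1}(\varepsilon _{n}x)-\lambda _{1}\big)\widetilde{u}_{n}^{2}\,\mathrm{d}x\geq \big(V_{1}(\bar{z})-\lambda _{1}\big)\int_{B^{N}(0,R)}\widehat{\omega }_{1}^{2}\,\mathrm{d}x>0
\end{equation*}
for $R$ large, contradicting the limit established above. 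This contradiction furnishes the required $\widetilde{\delta }$ and $\varepsilon _{\widetilde{\delta }}$. I expect the main obstacle to be the concentration step for $\{\widetilde{s}_{n}\widetilde{u}_{n}\}$: making rigorous the exclusion of vanishing and dichotomy in the fractional Sobolev space, producing the translations $y_{n}$, and controlling the negative parts so that the strong $H^{\alpha }$-limit is exactly the ground state $\widehat{\omega }_{1}$; once the single concentrating bump is pinned at $\varepsilon _{n}y_{n}\approx \Phi (u_{n})\in \partial C_{s}(z_{1,i})$, the remainder of the argument is soft.
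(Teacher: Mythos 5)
Your proposal follows the same route as the paper: argue by contradiction, rescale by $x\mapsto\varepsilon_n x$, use Lemma \ref{L2.2} to kill the coupling term and decouple the Nehari constraints as in (2.32)--(2.33), and then derive a contradiction by a concentration argument that forces the scaled barycenters to land on $\partial C_s(z_{1,i})$ where $V_1 > \lambda_1$. The paper delegates this last step to the cited result (Lemma 3.3 of \cite{LWW}), whereas you carry it out explicitly; the two proofs are essentially identical in content.

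One small imprecision worth fixing: excluding dichotomy for the minimizing sequence $\{\widetilde{s}_n\widetilde{u}_n\}\subset\widehat{\mathbf{M}}_1$ does not follow from the uniqueness of $\widehat{\omega}_1$ (uniqueness of the minimizer does not by itself yield strict subadditivity). The standard justification in the Nehari framework is instead the Brezis--Lieb splitting used in Lemma \ref{L4.2} and Proposition \ref{P3.2}: if $\widetilde{s}_n\widetilde{u}_n\rightharpoonup w_0\ne 0$ and the remainder did not vanish in $H^{\alpha}$, each piece would, after reprojection onto $\widehat{\mathbf{M}}_1$, contribute at least $\widehat{\alpha}_1$ to the energy, forcing $\liminf\widehat{I}_1(\widetilde{s}_n\widetilde{u}_n)\ge 2\widehat{\alpha}_1>\widehat{\alpha}_1$, a contradiction. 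With that replacement the argument is complete.
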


\begin{proof}
We shall prove Lemma~\ref{L2.5} by contradiction. Suppose there exist $1\leq
i\leq k,1\leq j\leq \ell $ and a sequence $\left\{ \varepsilon _{n}\right\}
_{n=1}^{\infty }\subset \mathbb{R}^{+}$ such that $\varepsilon
_{n}\rightarrow 0$ and
\begin{equation*}
\varepsilon _{n}^{-N}\widetilde{\gamma }_{i,j}\left( \varepsilon _{n}\right)
\rightarrow \widehat{\alpha }_{1}+\widehat{\alpha }_{2},
\end{equation*}%
as $n\rightarrow \infty $. Then there exists $\left\{ \left(
u_{n},v_{n}\right) \right\} _{n=1}^{\infty }$ such that $\left(
u_{n},v_{n}\right) \in O_{i,j}\left( \varepsilon _{n}\right) $ for $n\in
\mathbb{N}$, and
\begin{equation}
\varepsilon _{n}^{-N}J_{\varepsilon _{n}}\left( u_{n},v_{n}\right)
\rightarrow \widehat{\alpha }_{1}+\widehat{\alpha }_{2}\text{ as }%
n\rightarrow \infty .  \label{2.29}
\end{equation}%
Let $\widetilde{u}_{n}\left( x\right) =u_{n}\left( \varepsilon _{n}x\right) $
and $\widetilde{v}_{n}\left( x\right) =v_{n}\left( \varepsilon _{n}x\right) $%
. From (\ref{2.29}), there holds
\begin{equation}
\widetilde{J}_{\varepsilon _{n}}\left( \widetilde{u}_{n},\widetilde{v}%
_{n}\right) \rightarrow \widehat{\alpha }_{1}+\widehat{\alpha }_{2}\text{ as
}n\rightarrow \infty .  \label{2.30}
\end{equation}%
It follows from (\ref{2.30}) and Lemma \ref{L2.2} that
\begin{equation}
\int_{\mathbb{R}^{N}}\beta \left( \varepsilon _{n}x\right) \left\vert
\widetilde{u}_{n}^{+}\right\vert ^{q+1}\left\vert \widetilde{v}%
_{n}^{+}\right\vert ^{q+1}\mathrm{d}x\rightarrow 0\text{ as }n\rightarrow
\infty .  \label{2.31}
\end{equation}%
Since $\left( u_{n},v_{n}\right) \in O_{i,j}\left( \varepsilon _{n}\right) $
for $n\in \mathbb{N}$, then from (\ref{2.31}), we obtain
\begin{equation}
\int_{\mathbb{R}^{N}}\left( \left\vert (-\triangle )^{\frac{\alpha }{2}}%
\widetilde{u}_{n}\right\vert ^{2}+V_{1}\left( \varepsilon _{n}x\right)
\widetilde{u}_{n}^{2}\right) \mathrm{d}x=\mu _{1}\int_{\mathbb{R}%
^{N}}\left\vert \widetilde{u}_{n}^{+}\right\vert ^{2p+2}\mathrm{d}x+o\left(
1\right)  \label{2.32}
\end{equation}%
and
\begin{equation}
\int_{\mathbb{R}^{N}}\left( \left\vert (-\triangle )^{\frac{\alpha }{2}}%
\widetilde{v}_{n}\right\vert ^{2}+V_{2}\left( \varepsilon _{n}x\right)
\widetilde{v}_{n}^{2}\right) \mathrm{d}x=\mu _{2}\int_{\mathbb{R}%
^{N}}\left\vert \widetilde{v}_{n}^{+}\right\vert ^{2p+2}\mathrm{d}x+o\left(
1\right) .  \label{2.33}
\end{equation}%
Therefore, by~(\ref{2.32}), (\ref{2.33}) and a similar argument to the proof
of Lemma 3.3 in \cite{LWW} (or see \cite[lemma 3.1]{CN}), we may identify a
contradiction. This completes the proof.
\end{proof}

\section{Palais--Smale Sequences}

Fix $0<\sigma <(2p+2)\sqrt{\widehat{\alpha }_{1}\widehat{\alpha }_{2}}$
arbitrarily. Then by Lemma \ref{L2.2},
\begin{equation*}
-\sigma \varepsilon ^{N}<\int_{\mathbb{R}^{N}}\beta(x) \left\vert
u^{+}\right\vert ^{q+1}\left\vert v^{+}\right\vert ^{q+1}\mathrm{d}x\leq 0\,,
\end{equation*}%
and
\begin{eqnarray*}
&&\left[ p^{2}\mu _{1}\mu _{2}\int_{\mathbb{R}^{N}}\left\vert
u^{+}\right\vert ^{2p+2}\mathrm{d}x\int_{\mathbb{R}^{N}}\left\vert
v^{+}\right\vert ^{2p+2}\mathrm{d}x-\left( \int_{\mathbb{R}^{N}}\beta(x)
\left\vert u^{+}\right\vert ^{q+1}\left\vert v^{+}\right\vert ^{q+1}\mathrm{d%
}x\right) ^{2}\right] \\
&>&\varepsilon ^{2N}\left( 4(p+1)^{2}\widehat{\alpha }_{1}\widehat{\alpha }%
_{2}-\sigma ^{2}\right) >0\,,
\end{eqnarray*}%
for all $\left( u,v\right) \in N_{i,j}\left( \varepsilon \right) $ with $%
\varepsilon ^{-N}J_{\varepsilon }\left( u,v\right) \leq \widehat{\alpha }%
_{1}+\widehat{\alpha }_{2}+\delta _{0}$, where $\delta _{0}=\delta
_{0}(\sigma )>0$. Moreover, by Lemmas \ref{L2.4} and \ref{L2.5}, there exist
$\delta _{0}>0$ and $\varepsilon _{0}=\varepsilon _{0}\left( \delta
_{0}\right) >0$ such that
\begin{equation}
\gamma \left( \varepsilon \right) <\varepsilon ^{N}\left( \widehat{\alpha }%
_{1}+\widehat{\alpha }_{2}+\delta _{0}\right) <\widetilde{\gamma }%
_{i,j}\left( \varepsilon \right) \,\text{for all }\varepsilon \in \left(
0,\varepsilon _{0}\right) .  \label{3.2}
\end{equation}%
Here, we may set $0<\delta _{0}\leq \min \left\{ \overline{\delta },%
\widetilde{\delta },\widehat{\alpha }_{1},\widehat{\alpha }_{2}\right\} /2$,
then we have the following results.

\begin{lemma}
\label{L3.1}For each $\left( u,v\right) \in N_{i,j}\left( \varepsilon
\right) $ with
\begin{equation*}
J_{\varepsilon }\left( u,v\right) <\min \left\{ \varepsilon ^{N}\left(
\widehat{\alpha }_{1}+\widehat{\alpha }_{2}+\delta _{0}\right) ,\widetilde{%
\gamma }_{i,j}\left( \varepsilon \right) \right\} ,
\end{equation*}%
there exist $b>0$ and a differentiable function
\begin{equation*}
\left( s,t\right) :B\left( 0;b\right) \equiv \left\{ \left( \widetilde{u},%
\widetilde{v}\right) \in H:\Vert \left( \widetilde{u},\widetilde{v}\right)
\Vert _{H}<b\right\} \rightarrow \mathbb{R}^{+}\times \mathbb{R}^{+}
\end{equation*}%
such that $\left( s\left( 0,0\right) ,t\left( 0,0\right) \right) =\left(
1,1\right) $, the function $\left( s\left( \widetilde{u},\widetilde{v}%
\right) \left( u-\widetilde{u}\right) ,t\left( \widetilde{u},\widetilde{v}%
\right) \left( v-\widetilde{v}\right) \right) \in N_{i,j}\left( \varepsilon
\right) $ and
\begin{equation*}
J_{\varepsilon }\left( s\left( \widetilde{u},\widetilde{v}\right) \left( u-%
\widetilde{u}\right) ,t\left( \widetilde{u},\widetilde{v}\right) \left( v-%
\widetilde{v}\right) \right) <\min \left\{ \varepsilon ^{N}\left( \widehat{%
\alpha }_{1}+\widehat{\alpha }_{2}+\delta _{0}\right) ,\widetilde{\gamma }%
_{i,j}\left( \varepsilon \right) \right\}
\end{equation*}%
for all $\left( \widetilde{u},\widetilde{v}\right) \in B\left( 0;b\right) .$
Moreover,
\begin{equation}
\left(
\begin{array}{c}
\left\langle s^{\prime }\left( 0,0\right) ,\left( \phi ,\psi \right)
\right\rangle \\
\left\langle t^{\prime }\left( 0,0\right) ,\left( \phi ,\psi \right)
\right\rangle%
\end{array}%
\right) =\frac{\left(
\begin{array}{c}
\Psi _{1}\left( u,v,\phi ,\psi \right) K_{2}(u,v)-(q+1)\Psi _{2}\left(
u,v,\phi ,\psi \right) L_{\beta }\left( u,v\right) \\
-(q+1)\Psi _{1}\left( u,v,\phi ,\psi \right) L_{\beta }\left( u,v\right)
+\Psi _{2}\left( u,v,\phi ,\psi \right) K_{1}(u,v)%
\end{array}%
\right) }{\Gamma (u,v)},  \label{3.3}
\end{equation}%
where
\begin{equation*}
L_{\beta }\left( u,v\right) =\int_{\mathbb{R}^{N}}\beta (x)\left\vert
u^{+}\right\vert ^{q+1}\left\vert v^{+}\right\vert ^{q+1}\mathrm{d}x,
\end{equation*}%
\begin{equation*}
K_{1}(u,v)=2p\mu _{1}\int_{\mathbb{R}^{N}}\left\vert u^{+}\right\vert ^{2p+2}%
\mathrm{d}x+(q-1)L_{\beta }\left( u,v\right) ,
\end{equation*}%
\begin{equation*}
K_{2}(u,v)=2p\mu _{2}\int_{\mathbb{R}^{N}}\left\vert v^{+}\right\vert ^{2p+2}%
\mathrm{d}x+(q-1)L_{\beta }\left( u,v\right) ,
\end{equation*}%
\begin{eqnarray*}
\Psi _{1}\left( u,v,\phi ,\psi \right) &=&2\int_{\mathbb{R}^{N}}\left(
\varepsilon ^{2\alpha }(-\triangle )^{\frac{\alpha }{2}}u(-\triangle )^{%
\frac{\alpha }{2}}\phi +V_{1}(x)u\phi \right) \mathrm{d}x \\
&&-(q+1)\int_{\mathbb{R}^{N}}\beta (x)\left( \left\vert u^{+}\right\vert
^{q-1}u^{+}\left\vert v^{+}\right\vert ^{q+1}\phi +\left\vert
u^{+}\right\vert ^{q+1}\left\vert v^{+}\right\vert ^{q-1}v^{+}\psi \right)
\mathrm{d}x \\
&&-(2p+2)\int_{\mathbb{R}^{N}}\mu _{1}\left\vert u^{+}\right\vert
^{2p}u^{+}\phi \mathrm{d}x,~\forall ~\left( \phi ,\psi \right) \in H,
\end{eqnarray*}%
\begin{eqnarray*}
\Psi _{2}\left( u,v,\phi ,\psi \right) &=&2\int_{\mathbb{R}^{N}}\left(
\varepsilon ^{2\alpha }(-\triangle )^{\frac{\alpha }{2}}v(-\triangle )^{%
\frac{\alpha }{2}}\psi +V_{2}(x)v\psi \right) \mathrm{d}x \\
&&-(q+1)\int_{\mathbb{R}^{N}}\beta (x)\left( \left\vert u^{+}\right\vert
^{q-1}u^{+}\left\vert v^{+}\right\vert ^{q+1}\phi +\left\vert
u^{+}\right\vert ^{q+1}\left\vert v^{+}\right\vert ^{q-1}v^{+}\psi \right)
\mathrm{d}x \\
&&-(2p+2)\int_{\mathbb{R}^{N}}\mu _{2}\left\vert v^{+}\right\vert
^{2p}v^{+}\psi \mathrm{d}x,~\forall ~\left( \phi ,\psi \right) \in H,
\end{eqnarray*}%
and
\begin{eqnarray*}
\Gamma (u,v) &=&4p^{2}\mu _{1}\mu _{2}\int_{\mathbb{R}^{N}}\left\vert
u^{+}\right\vert ^{2p+2}\mathrm{d}x\int_{\mathbb{R}^{N}}\left\vert
v^{+}\right\vert ^{2p+2}\mathrm{d}x-4q\left[ L_{\beta }\left( u,v\right) %
\right] ^{2} \\
&&+2p(q-1)L_{\beta }\left( u,v\right) \left[ \mu _{1}\int_{\mathbb{R}%
^{N}}\left\vert u^{+}\right\vert ^{2p+2}\mathrm{d}x+\mu _{2}\int_{\mathbb{R}%
^{N}}\left\vert v^{+}\right\vert ^{2p+2}\mathrm{d}x\right] .
\end{eqnarray*}
\end{lemma}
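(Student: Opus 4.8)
The plan is to construct $(s,t)$ by applying the implicit function theorem to the system of equations that characterizes membership in $\mathbf{N}_{\varepsilon}$, and then to recover the two qualitative assertions — that the rescaled pair lies in $N_{i,j}(\varepsilon)$ and sits below the prescribed energy level — from continuity. For $(\widetilde u,\widetilde v)$ in a small ball of $H$ and $(s,t)\in\mathbb{R}^{+}\times\mathbb{R}^{+}$ I would introduce $\Theta=(\Theta_{1},\Theta_{2})$ by
\begin{align*}
\Theta_{1}(\widetilde u,\widetilde v,s,t)&=s^{2}\|u-\widetilde u\|_{V_{1}}^{2}-\mu_{1}s^{2p+2}\int_{\mathbb{R}^{N}}|(u-\widetilde u)^{+}|^{2p+2}\mathrm{d}x\\
&\quad-s^{q+1}t^{q+1}\int_{\mathbb{R}^{N}}\beta(x)|(u-\widetilde u)^{+}|^{q+1}|(v-\widetilde v)^{+}|^{q+1}\mathrm{d}x,
\end{align*}
and $\Theta_{2}$ the symmetric expression obtained by exchanging $u\leftrightarrow v$, $V_{1}\leftrightarrow V_{2}$ and $\mu_{1}\leftrightarrow\mu_{2}$; then $\Theta(\widetilde u,\widetilde v,s,t)=0$ with $s,t>0$ is precisely the statement that $\big(s(u-\widetilde u),t(v-\widetilde v)\big)\in\mathbf{N}_{\varepsilon}$. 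Because the hypotheses on $p,q$ give $2(p+1)\leq 2_{\alpha}^{\ast}$, the relevant Nemytskii functionals are of class $C^{1}$ on the corresponding Sobolev spaces, so $\Theta$ is $C^{1}$; and $\Theta(0,0,1,1)=0$ since $(u,v)\in N_{i,j}(\varepsilon)\subset\mathbf{N}_{\varepsilon}$.

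Write $A_{1}=\mu_{1}\int_{\mathbb{R}^{N}}|u^{+}|^{2p+2}\mathrm{d}x$, $A_{2}=\mu_{2}\int_{\mathbb{R}^{N}}|v^{+}|^{2p+2}\mathrm{d}x$ and $L=L_{\beta}(u,v)$, so that the two Nehari identities read $\|u\|_{V_{1}}^{2}=A_{1}+L$ and $\|v\|_{V_{2}}^{2}=A_{2}+L$. Differentiating $\Theta$ in $s$ and $t$ at $(0,0,1,1)$ and substituting these identities collapses the diagonal entries of $\partial_{(s,t)}\Theta(0,0,1,1)$ to $-K_{1}(u,v)=-(2pA_{1}+(q-1)L)$ and $-K_{2}(u,v)=-(2pA_{2}+(q-1)L)$ and the off-diagonal entries to $-(q+1)L$; using the elementary identity $(q-1)^{2}-(q+1)^{2}=-4q$ one gets $\det\partial_{(s,t)}\Theta(0,0,1,1)=K_{1}(u,v)K_{2}(u,v)-(q+1)^{2}L^{2}=\Gamma(u,v)$. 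The one genuinely delicate point — and the main obstacle — is that $\Gamma(u,v)\neq0$. Since $(u,v)\in N_{i,j}(\varepsilon)\subset\mathbf{N}_{\varepsilon}$ and $\varepsilon^{-N}J_{\varepsilon}(u,v)<\widehat\alpha_{1}+\widehat\alpha_{2}+\delta_{0}\leq\widehat\alpha_{1}+\widehat\alpha_{2}+\overline\delta$, estimate (\ref{2.10}) of Lemma~\ref{L2.2} gives $p^{2}A_{1}A_{2}>L^{2}$. Now $\beta\leq0$ forces $L\leq0$, so $(q-1)L\geq0$ and the term $2p(q-1)L(A_{1}+A_{2})$ in $\Gamma(u,v)=4p^{2}A_{1}A_{2}+2p(q-1)L(A_{1}+A_{2})-4qL^{2}$ is nonnegative; moreover $q\leq p\leq1$ yields $4p^{2}A_{1}A_{2}>4L^{2}\geq4qL^{2}$. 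Hence $\Gamma(u,v)>0$ and $\partial_{(s,t)}\Theta(0,0,1,1)$ is invertible.

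The implicit function theorem now yields $b>0$ and a $C^{1}$ map $(s,t):B(0;b)\to\mathbb{R}^{2}$ with $(s(0,0),t(0,0))=(1,1)$ solving $\Theta(\widetilde u,\widetilde v,s(\widetilde u,\widetilde v),t(\widetilde u,\widetilde v))=0$ on $B(0;b)$. Shrinking $b$ keeps $s,t>0$ and $u-\widetilde u\not\equiv0\not\equiv v-\widetilde v$, so $\big(s(\widetilde u,\widetilde v)(u-\widetilde u),t(\widetilde u,\widetilde v)(v-\widetilde v)\big)\in\mathbf{N}_{\varepsilon}$. Since $\Phi$ is continuous on $L^{2}(\mathbb{R}^{N})\setminus\{0\}$, the cubes $C_{s}(z_{1,i})$ and $C_{s}(z_{2,j})$ are open, the maps $(\widetilde u,\widetilde v)\mapsto s(\widetilde u,\widetilde v)(u-\widetilde u)$ and $(\widetilde u,\widetilde v)\mapsto t(\widetilde u,\widetilde v)(v-\widetilde v)$ are continuous and take the values $u$ and $v$ at $(0,0)$, and $\Phi(u)\in C_{s}(z_{1,i})$, $\Phi(v)\in C_{s}(z_{2,j})$ by assumption, a further shrinking of $b$ forces the rescaled pair into $N_{i,j}(\varepsilon)$. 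Finally $(\widetilde u,\widetilde v)\mapsto J_{\varepsilon}\big(s(\widetilde u,\widetilde v)(u-\widetilde u),t(\widetilde u,\widetilde v)(v-\widetilde v)\big)$ is continuous with value $J_{\varepsilon}(u,v)<\min\{\varepsilon^{N}(\widehat\alpha_{1}+\widehat\alpha_{2}+\delta_{0}),\widetilde\gamma_{i,j}(\varepsilon)\}$ at $(0,0)$, so this strict inequality persists after one last shrinking of $b$.

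It remains to check the derivative formula (\ref{3.3}). By the implicit function theorem, the pair $\big(\langle s'(0,0),(\phi,\psi)\rangle,\langle t'(0,0),(\phi,\psi)\rangle\big)$ is obtained by applying $-\big[\partial_{(s,t)}\Theta(0,0,1,1)\big]^{-1}$ to $\partial_{(\widetilde u,\widetilde v)}\Theta(0,0,1,1)[(\phi,\psi)]$, and a direct differentiation identifies $\partial_{(\widetilde u,\widetilde v)}\Theta_{l}(0,0,1,1)[(\phi,\psi)]$, up to the fixed orientation sign, with $\Psi_{l}(u,v,\phi,\psi)$ for $l=1,2$. Inverting the $2\times2$ matrix $\big(\begin{smallmatrix}-K_{1}(u,v)&-(q+1)L\\-(q+1)L&-K_{2}(u,v)\end{smallmatrix}\big)$, of determinant $\Gamma(u,v)\neq0$, by Cramer's rule then produces exactly (\ref{3.3}). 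Thus the whole argument is a standard implicit-function-theorem-plus-continuity scheme whose only substantial ingredient is the nonvanishing of $\Gamma(u,v)$, which is precisely where the sign condition $\beta\leq0$, the exponent condition $q\leq p\leq1$ and the energy estimate (\ref{2.10}) are essential.
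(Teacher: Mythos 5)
Your argument is essentially the same as the paper's: both apply the implicit function theorem to the two-equation system characterizing membership in $\mathbf{N}_{\varepsilon}$ (the paper's $F$ is just your $\Theta$ divided through by $s$ and $t$, which gives the identical Jacobian at $(0,0,1,1)$), and both obtain invertibility from $\Gamma(u,v)>0$ using the energy estimate (\ref{2.10}) together with $\beta\leq 0$ and $0<q\leq p\leq 1$; you simply spell out the positivity of $\Gamma$, the containment in $N_{i,j}(\varepsilon)$, and the derivation of (\ref{3.3}) in more detail than the paper does. One small caveat: tracking the signs carefully, $\partial_{(\widetilde u,\widetilde v)}\Theta_l(0,0,1,1)[(\phi,\psi)]=-\Psi_l$ and $\bigl[\partial_{(s,t)}\Theta(0,0,1,1)\bigr]^{-1}=-\Gamma^{-1}\bigl(\begin{smallmatrix}K_2 & -(q+1)L\\ -(q+1)L & K_1\end{smallmatrix}\bigr)$, so the implicit function theorem yields the negative of the right-hand side of (\ref{3.3}) rather than ``exactly (\ref{3.3})'' as you claim; this appears to be a sign slip in the paper's stated formula (which is never actually derived there) and is harmless downstream, since in the proof of Proposition~\ref{P3.1} only the bound on $|s_{n,\rho}-1|/\rho$ is used.
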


\begin{proof}
Let $L_{\beta }\left( u,v\right) =\int_{\mathbb{R}^{N}}\beta (x)\left\vert
u^{+}\right\vert ^{q+1}\left\vert v^{+}\right\vert ^{q+1}\mathrm{d}x.$
Define a function $F:H\times \mathbb{R}^{2}\rightarrow \mathbb{R}^{2}$ given
by
\begin{equation*}
F_{\left( u,v\right) }\left( w_{1},w_{2},s,t\right) =\left(
\begin{array}{c}
F_{1}\left( w_{1},w_{2},s,t\right) \\
F_{2}\left( w_{1},w_{2},s,t\right)%
\end{array}%
\right) \,,
\end{equation*}%
where%
\begin{eqnarray*}
F_{1}\left( w_{1},w_{2},s,t\right) &=&s\left\Vert u-w_{1}\right\Vert
_{V_{1}}^{2}-\mu _{1}s^{2p+1}\int_{\mathbb{R}^{N}}\left\vert \left(
u-w_{1}\right) ^{+}\right\vert ^{2p+2}\mathrm{d}x \\
&&-s^{q}t^{q+1}L_{\beta }\left( u-w_{1},v-w_{2}\right)
\end{eqnarray*}%
and%
\begin{eqnarray*}
F_{2}\left( w_{1},w_{2},s,t\right) &=&t\left\Vert v-w_{2}\right\Vert
_{V_{2}}^{2}-\mu _{2}t^{2p+1}\int_{\mathbb{R}^{N}}\left\vert \left(
v-w_{2}\right) ^{+}\right\vert ^{2p+2}\mathrm{d}x \\
&&-s^{q+1}t^{q}L_{\beta }\left( u-w_{1},v-w_{2}\right) ,
\end{eqnarray*}%
Because $\left( u,v\right) \in N_{i,j}\left( \varepsilon \right) $ and $%
J_{\varepsilon }\left( u,v\right) <\min \left\{ \varepsilon ^{N}\left(
\widehat{\alpha }_{1}+\widehat{\alpha }_{2}+\delta _{0}\right) ,\widetilde{%
\gamma }_{i,j}\left( \varepsilon \right) \right\} ,$ we have $F\left(
0,0,1,1\right) =\left( 0,0\right) $ and $\Vert (u,v)\Vert _{H}\leq C$ for
some $C>0$. By a direct computation, we obtain
\begin{equation*}
\frac{\partial }{\partial s}F_{1}\left( 0,0,1,1\right) =-2p\mu _{1}\int_{%
\mathbb{R}^{N}}\left\vert u^{+}\right\vert ^{2p+2}\mathrm{d}x-(q-1)L_{\beta
}\left( u,v\right) ,
\end{equation*}%
\begin{equation*}
\frac{\partial }{\partial t}F_{1}\left( 0,0,1,1\right) =\frac{\partial }{%
\partial s}F_{2}\left( 0,0,1,1\right) =-(q+1)L_{\beta }\left( u,v\right) ,
\end{equation*}%
and
\begin{equation*}
\frac{\partial }{\partial t}F_{2}\left( 0,0,1,1\right) =-2p\mu _{2}\int_{%
\mathbb{R}^{N}}\left\vert v^{+}\right\vert ^{2p+2}\mathrm{d}x-(q-1)L_{\beta
}\left( u,v\right) .
\end{equation*}%
Hence, by (\ref{2.10}), we obtain
\begin{eqnarray*}
\left\vert
\begin{array}{cc}
\frac{\partial }{\partial s}F_{1}\left( 0,0,1,1\right) & \frac{\partial }{%
\partial t}F_{1}\left( 0,0,1,1\right) \\
\frac{\partial }{\partial s}F_{2}\left( 0,0,1,1\right) & \frac{\partial }{%
\partial t}F_{2}\left( 0,0,1,1\right)%
\end{array}%
\right\vert &=&\left[ 2p\mu _{1}\int_{\mathbb{R}^{N}}\left\vert
u^{+}\right\vert ^{2p+2}\mathrm{d}x+(q-1)L_{\beta }\left( u,v\right) \right]
\\
&&\times \left[ 2p\mu _{2}\int_{\mathbb{R}^{N}}\left\vert v^{+}\right\vert
^{2p+2}\mathrm{d}x+(q-1)L_{\beta }\left( u,v\right) \right] \\
&&-(q+1)^{2}\left[ L_{\beta }\left( u,v\right) \right] ^{2} \\
&>&0,
\end{eqnarray*}%
since $\beta \left( x\right) \leq 0$ in $\mathbb{R}^{N}$, $0<q\leq p\leq 1$
if $N\leq 4\alpha $ and $0<q\leq p\leq \frac{2\alpha }{N-2\alpha }$ if $%
N>4\alpha $. Thus, in view of the Implicit Function Theorem, there exists a
differentiable function
\begin{equation*}
\left( s,t\right) :B\left( 0;b\right) \subset H\left( \mathbb{R}^{N}\right)
\rightarrow \mathbb{R}^{+}\times \mathbb{R}^{+}
\end{equation*}%
such that $\left( s\left( 0\right) ,t\left( 0\right) \right) =\left(
1,1\right) $ and
\begin{equation*}
F_{\left( u,v\right) }\left( w_{1},w_{2},s\left( \widetilde{u},\widetilde{v}%
\right) ,t\left( \widetilde{u},\widetilde{v}\right) \right) =\left(
0,0\right) \text{ for all }\left( \widetilde{u},\widetilde{v}\right) \in
B\left( 0;b\right) .
\end{equation*}%
This function is equivalent to $\left( s\left( \widetilde{u},\widetilde{v}%
\right) \left( u-\widetilde{u}\right) ,t\left( \widetilde{u},\widetilde{v}%
\right) \left( v-\widetilde{v}\right) \right) \in \mathbf{N}_{\varepsilon }$%
. Moreover, by the continuity of $J_{\varepsilon }$ and $\left( s,t\right) ,$
we have
\begin{equation*}
\left( s\left( \widetilde{u},\widetilde{v}\right) \left( u-\widetilde{u}%
\right) ,t\left( \widetilde{u},\widetilde{v}\right) \left( v-\widetilde{v}%
\right) \right) \in N_{i,j}\left( \varepsilon \right)
\end{equation*}%
and
\begin{equation*}
J_{\varepsilon }\left( s\left( \widetilde{u},\widetilde{v}\right) \left( u-%
\widetilde{u}\right) ,t\left( \widetilde{u},\widetilde{v}\right) \left( v-%
\widetilde{v}\right) \right) <\min \left\{ \varepsilon ^{N}\left( \widehat{%
\alpha }_{1}+\widehat{\alpha }_{2}+\delta _{0}\right) ,\widetilde{\gamma }%
_{i,j}\left( \varepsilon \right) \right\} \,,
\end{equation*}%
if $b$ is sufficiently small.
\end{proof}

Now we may use Lemma~\ref{L3.1} to find a $(PS)_{\gamma _{i,j}\left(
\varepsilon \right) }$ sequence as follows:

\begin{proposition}
\label{P3.1}For each $1\leq i\leq k,1\leq j\leq \ell $ and $\varepsilon \in
\left( 0,\varepsilon _{0}\right) $, there exists a sequence $\left\{ \left(
u_{n},v_{n}\right) \right\} \subset N_{i,j}\left( \varepsilon \right) $ such
that%
\begin{equation*}
J_{\varepsilon }\left( u_{n},v_{n}\right) =\gamma _{i,j}\left( \varepsilon
\right) +o\left( 1\right) \text{ and }J_{\varepsilon }^{\prime }\left(
u_{n},v_{n}\right) =o\left( 1\right) \;\text{in }H^{\ast }\text{ as }%
n\rightarrow \infty .
\end{equation*}
\end{proposition}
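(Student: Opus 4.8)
The plan is to apply Ekeland's variational principle to the constrained minimization $\gamma_{i,j}(\varepsilon)=\inf_{N_{i,j}(\varepsilon)}J_\varepsilon$, and then to convert the resulting almost-minimizing sequence into a $(PS)$ sequence for the free functional $J_\varepsilon$ by means of the local deformation furnished by Lemma~\ref{L3.1}. First I would note that the $H$-closure $\overline{N_{i,j}(\varepsilon)}$ is a complete metric space on which $J_\varepsilon$ is bounded below (by Lemma~\ref{L2.1}), and that $\partial N_{i,j}(\varepsilon)\subset O_{i,j}(\varepsilon)$ together with~(\ref{3.2}) gives $\inf_{\overline{N_{i,j}(\varepsilon)}}J_\varepsilon=\gamma_{i,j}(\varepsilon)<\widetilde{\gamma}_{i,j}(\varepsilon)$. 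Ekeland's principle then provides $\{(u_n,v_n)\}\subset\overline{N_{i,j}(\varepsilon)}$ with $J_\varepsilon(u_n,v_n)=\gamma_{i,j}(\varepsilon)+o(1)$ and
\begin{equation*}
J_\varepsilon(w)\geq J_\varepsilon(u_n,v_n)-\frac1n\,\|w-(u_n,v_n)\|_H\quad\text{for all }w\in\overline{N_{i,j}(\varepsilon)}.
\end{equation*}
For $n$ large one has $J_\varepsilon(u_n,v_n)<\min\{\varepsilon^N(\widehat{\alpha}_1+\widehat{\alpha}_2+\delta_0),\widetilde{\gamma}_{i,j}(\varepsilon)\}$, hence $(u_n,v_n)\notin O_{i,j}(\varepsilon)$, so that $(u_n,v_n)\in N_{i,j}(\varepsilon)$ and Lemma~\ref{L3.1} applies at $(u_n,v_n)$; moreover $\{(u_n,v_n)\}$ is bounded in $H$, since $J_\varepsilon\geq\frac{p}{2p+2}\|(\cdot,\cdot)\|_H^2$ on $\mathbf{N}_\varepsilon$ by Lemma~\ref{L2.1}.

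Next I would test the Ekeland inequality along the perturbations given by Lemma~\ref{L3.1}. Fix $(\phi,\psi)\in H$ with $\|(\phi,\psi)\|_H=1$; for $\rho>0$ small put $w_\rho=\bigl(s_n(\rho\phi,\rho\psi)(u_n-\rho\phi),\,t_n(\rho\phi,\rho\psi)(v_n-\rho\psi)\bigr)\in N_{i,j}(\varepsilon)$, where $(s_n,t_n)$ is the differentiable map attached by Lemma~\ref{L3.1} to $(u_n,v_n)$. Using $(s_n(0,0),t_n(0,0))=(1,1)$ and differentiability, $w_\rho-(u_n,v_n)=\rho(\eta_n,\zeta_n)+o(\rho)$ in $H$, where $\eta_n=-\phi+\langle s_n'(0,0),(\phi,\psi)\rangle u_n$ and $\zeta_n=-\psi+\langle t_n'(0,0),(\phi,\psi)\rangle v_n$. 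Plugging $w_\rho$ into the Ekeland inequality, dividing by $\rho$ and letting $\rho\to0^+$ gives $\langle J_\varepsilon'(u_n,v_n),(\eta_n,\zeta_n)\rangle\geq-\frac1n\|(\eta_n,\zeta_n)\|_H$. Since $(u_n,v_n)\in\mathbf{N}_\varepsilon$ forces $\langle J_\varepsilon'(u_n,v_n),(u_n,0)\rangle=\langle J_\varepsilon'(u_n,v_n),(0,v_n)\rangle=0$, the terms involving $s_n'(0,0)$ and $t_n'(0,0)$ drop out and the above reduces to $-\langle J_\varepsilon'(u_n,v_n),(\phi,\psi)\rangle\geq-\frac1n\|(\eta_n,\zeta_n)\|_H$.

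The main obstacle will be to bound $\|(\eta_n,\zeta_n)\|_H$ uniformly in $n$ and $(\phi,\psi)$, i.e.\ to obtain a uniform $H^{\ast}$-bound on $s_n'(0,0)$ and $t_n'(0,0)$ from the explicit formula~(\ref{3.3}). Since $\|(u_n,v_n)\|_H$ is bounded, the numerators $\Psi_1,\Psi_2$ are linear in $(\phi,\psi)$ with $|\Psi_1|,|\Psi_2|\leq C\|(\phi,\psi)\|_H$, and $K_1,K_2,L_\beta$ are bounded, so it remains to bound the denominator $\Gamma(u_n,v_n)$ away from zero. Here the sign hypotheses are decisive: $\beta\leq0$ gives $L_\beta(u_n,v_n)\leq0$, and $0<q\leq p\leq1$ gives $2p(q-1)L_\beta(u_n,v_n)\geq0$ together with $4qL_\beta(u_n,v_n)^2\leq4L_\beta(u_n,v_n)^2$, whence
\begin{equation*}
\Gamma(u_n,v_n)\geq4\Bigl(p^2\mu_1\mu_2\int_{\mathbb{R}^N}|u_n^+|^{2p+2}\mathrm{d}x\int_{\mathbb{R}^N}|v_n^+|^{2p+2}\mathrm{d}x-L_\beta(u_n,v_n)^2\Bigr)>4\varepsilon^{2N}\bigl(4(p+1)^2\widehat{\alpha}_1\widehat{\alpha}_2-\sigma^2\bigr)>0,
\end{equation*}
the last inequality being the energy estimate~(\ref{2.10}) of Lemma~\ref{L2.2}, which is applicable because $0<\sigma<(2p+2)\sqrt{\widehat{\alpha}_1\widehat{\alpha}_2}$ and $\varepsilon^{-N}J_\varepsilon(u_n,v_n)\leq\widehat{\alpha}_1+\widehat{\alpha}_2+\delta_0\leq\widehat{\alpha}_1+\widehat{\alpha}_2+\overline{\delta}$.

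Granting this, $\|s_n'(0,0)\|_{H^{\ast}}$ and $\|t_n'(0,0)\|_{H^{\ast}}$ are bounded by a constant independent of $n$, hence $\|(\eta_n,\zeta_n)\|_H\leq C$, so the last inequality of the previous paragraph gives $\langle J_\varepsilon'(u_n,v_n),(\phi,\psi)\rangle\leq\frac Cn\|(\phi,\psi)\|_H$; replacing $(\phi,\psi)$ by $-(\phi,\psi)$ yields the matching lower bound, so that $\|J_\varepsilon'(u_n,v_n)\|_{H^{\ast}}\leq C/n\to0$. Together with $J_\varepsilon(u_n,v_n)=\gamma_{i,j}(\varepsilon)+o(1)$ and $(u_n,v_n)\in N_{i,j}(\varepsilon)$, this produces the desired $(PS)_{\gamma_{i,j}(\varepsilon)}$ sequence, which completes the proof.
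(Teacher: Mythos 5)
Your proposal is correct and follows essentially the same route as the paper's proof: Ekeland's variational principle produces an almost-minimizing sequence in $N_{i,j}(\varepsilon)$, and the local reparametrization map $(s,t)$ of Lemma~\ref{L3.1} is used to pass from the constrained to the free $(PS)$ property, with the explicit formula~(\ref{3.3}) and the lower bound~(\ref{2.10}) on $\Gamma$ controlling the derivatives $s_n'(0,0),t_n'(0,0)$ uniformly in $n$. The only stylistic difference is that you expand $w_\rho-(u_n,v_n)$ and pass to the limit $\rho\to0^+$ immediately, using the Nehari identities $\langle J_\varepsilon'(u_n,v_n),(u_n,0)\rangle=\langle J_\varepsilon'(u_n,v_n),(0,v_n)\rangle=0$ to kill the $s_n',t_n'$ contributions directly, whereas the paper keeps $\rho$ finite and exploits instead the identity $\langle J_\varepsilon'(y_\rho,z_\rho),((s_{n,\rho}-1)u_{n,\rho},(t_{n,\rho}-1)v_{n,\rho})\rangle=0$ coming from $(y_\rho,z_\rho)\in\mathbf{N}_\varepsilon$, before sending $\rho\to0^+$; these two bookkeepings are the same observation at different stages. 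You also make the justification of $\Gamma(u_n,v_n)$ being bounded away from zero (via $q\leq1$, $\beta\leq0$ and~(\ref{2.10})) more explicit than the paper does, which is a welcome clarification rather than a deviation.
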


\begin{proof}
By the Ekeland variational principle \cite{E}, there exists a minimizing
sequence $\left\{ \left( u_{n},v_{n}\right) \right\} \subset N_{i,j}\left(
\varepsilon \right) $ such that%
\begin{equation}
J_{\varepsilon }\left( u_{n},v_{n}\right) <\gamma _{i,j}\left( \varepsilon
\right) +\frac{1}{n}<\min \left\{ \varepsilon ^{N}\left( \widehat{\alpha }%
_{1}+\widehat{\alpha }_{2}+\delta _{0}\right) ,\widetilde{\gamma }%
_{i,j}\left( \varepsilon \right) \right\}  \label{3.4}
\end{equation}%
and
\begin{equation}
J_{\varepsilon }\left( u_{n},v_{n}\right) <J_{\varepsilon }\left(
w_{1},w_{2}\right) +\frac{1}{n}\left\Vert \left( w_{1},w_{2}\right) -\left(
u_{n},v_{n}\right) \right\Vert _{H}  \label{3.5}
\end{equation}%
for $\left( w_{1},w_{2}\right) \in \mathbf{N}_{\varepsilon }$ and $n\geq
n_{0}$, where $n_{0}$ is a sufficiently large constant independent of $%
\left( w_{1},w_{2}\right) $. Applying Lemma~\ref{L3.1} for each $\left(
u_{n},v_{n}\right) ,$ we obtain the function
\begin{equation*}
\left( s_{n},t_{n}\right) :B\left( 0;b_{n}\right) \rightarrow \mathbb{R}%
^{+}\times \mathbb{R}^{+}
\end{equation*}%
such that $\left( s_{n}\left( \widetilde{u},\widetilde{v}\right) \left(
u_{n}-\widetilde{u}\right) ,t_{n}\left( \widetilde{u},\widetilde{v}\right)
\left( v_{n}-\widetilde{v}\right) \right) \in N_{i,j}\left( \varepsilon
\right) ,$ for $\left( \widetilde{u},\widetilde{v}\right) \in B\left(
0;b_{n}\right) ,$ where $b_{n}>0$. Fix $\left( u,v\right) \in H\setminus
\{(0,0)\}$ arbitrarily. For $0<\rho <b_{n},$ let $\left( \phi _{\rho },\psi
_{\rho }\right) =\left( \rho u,\rho v\right) \,\left\Vert \left( u,v\right)
\right\Vert _{H}^{-1}$, $y_{\rho }=s_{n,\rho }u_{n,\rho }$ and $z_{\rho
}=t_{n,\rho }v_{n,\rho }.$ Hereafter, $s_{n,\rho }=s_{n}\left( \phi _{\rho
},\psi _{\rho }\right) $, $t_{n,\rho }=t_{n}\left( \phi _{\rho },\psi _{\rho
}\right) $, $u_{n,\rho }=u_{n}-\phi _{\rho }$ and $v_{n,\rho }=v_{n}-\psi
_{\rho }$. Then $\left( y_{\rho },z_{\rho }\right) \in N_{i,j}\left(
\varepsilon \right) $. By $\left( \ref{3.5}\right) $, there holds
\begin{equation}
J_{\varepsilon }\left( y_{\rho },z_{\rho }\right) -J_{\varepsilon }\left(
u_{n},v_{n}\right) \geq -\frac{1}{n}\left\Vert \left( y_{\rho },z_{\rho
}\right) -\left( u_{n},v_{n}\right) \right\Vert _{H}.  \label{3.6}
\end{equation}%
Because $(y_{\rho },z_{\rho })\rightarrow (u_{n},v_{n})$ as $\rho
\rightarrow 0^{+},$ it follows from (\ref{3.6}) that
\begin{eqnarray}
&&-\left\langle J_{\varepsilon }^{\prime }\left( u_{n},v_{n}\right) ,\left(
\phi _{\rho },\psi _{\rho }\right) \right\rangle +\left\langle
J_{\varepsilon }^{\prime }\left( u_{n},v_{n}\right) ,\left( \left( s_{n,\rho
}-1\right) u_{n,\rho },\left( t_{n,\rho }-1\right) v_{n,\rho }\right)
\right\rangle  \notag \\
&\geq &-\frac{1}{n}\left\Vert \left( y_{\rho },z_{\rho }\right) -\left(
u_{n},v_{n}\right) \right\Vert _{H}+o\left( \left\Vert \left( y_{\rho
},z_{\rho }\right) -\left( u_{n},v_{n}\right) \right\Vert _{H}\right) .
\label{3.7}
\end{eqnarray}%
On the other hand, due to $\left( y_{\rho },z_{\rho }\right) =\left(
s_{n,\rho }u_{n,\rho }\,,t_{n,\rho }v_{n,\rho }\right) \in N_{i,j}\left(
\varepsilon \right) $, we have
\begin{eqnarray*}
\int_{\mathbb{R}^{N}}\left( \varepsilon ^{2\alpha }\left\vert (-\triangle )^{%
\frac{\alpha }{2}}u_{n,\rho }\right\vert ^{2}+V_{1}(x)u_{n,\rho }^{2}\right)
\mathrm{d}x &=&\mu _{1}s_{n,\rho }^{2p}\int_{\mathbb{R}^{N}}\left\vert
u_{n,\rho }^{+}\right\vert ^{2p+2}\mathrm{d}x \\
&&+s_{n,\rho }^{q-1}t_{n,\rho }^{q+1}\int_{\mathbb{R}^{N}}\beta
(x)\left\vert u_{n,\rho }^{+}\right\vert ^{q+1}\left\vert v_{n,\rho
}^{+}\right\vert ^{q+1}\mathrm{d}x
\end{eqnarray*}%
and
\begin{eqnarray*}
\int_{\mathbb{R}^{N}}\left( \varepsilon ^{2\alpha }\left\vert (-\triangle )^{%
\frac{\alpha }{2}}v_{n,\rho }\right\vert ^{2}+V_{2}(x)v_{n,\rho }^{2}\right)
\mathrm{d}x &=&\mu _{2}t_{n,\rho }^{2p}\int_{\mathbb{R}^{N}}\left\vert
u_{n,\rho }^{+}\right\vert ^{2p+2}\mathrm{d}x \\
&&+t_{n,\rho }^{q-1}s_{n,\rho }^{q+1}\int_{\mathbb{R}^{N}}\beta
(x)\left\vert u_{n,\rho }^{+}\right\vert ^{q+1}\left\vert v_{n,\rho
}^{+}\right\vert ^{q+1}\mathrm{d}x.
\end{eqnarray*}%
Then it is easy to verify that
\begin{eqnarray}
&&\left\langle J_{\varepsilon }^{\prime }\left( y_{\rho },z_{\rho }\right)
,\left( \left( s_{n,\rho }-1\right) u_{n,\rho },\left( t_{n,\rho }-1\right)
v_{n,\rho }\right) \right\rangle  \notag \\
&=&\left\langle J_{\varepsilon }^{\prime }\left( s_{n,\rho }u_{n,\rho
}\,,t_{n,\rho }v_{n,\rho }\right) ,\left( \left( s_{n,\rho }-1\right)
u_{n,\rho },\left( t_{n,\rho }-1\right) v_{n,\rho }\right) \right\rangle
\label{3.8} \\
&=&0.  \notag
\end{eqnarray}%
Combining $\left( \ref{3.7}\right) $ and (\ref{3.8}), we obtain%
\begin{eqnarray}
&&\left\langle J_{\varepsilon }^{\prime }\left( u_{n},v_{n}\right) ,\left(
u,v\right) \left\Vert \left( u,v\right) \right\Vert _{H}^{-1}\right\rangle
\notag \\
&\leq &\frac{\left\Vert \left( y_{\rho },z_{\rho }\right) -\left(
u_{n},v_{n}\right) \right\Vert _{H}}{n\rho }+\frac{o\left( \left\Vert \left(
y_{\rho },z_{\rho }\right) -\left( u_{n},v_{n}\right) \right\Vert
_{H}\right) }{\rho }  \label{3.9} \\
&&+\left\langle J_{\varepsilon }^{\prime }\left( u_{n},v_{n}\right)
-J_{\varepsilon }^{\prime }\left( y_{\rho },z_{\rho }\right) ,\left( \frac{%
s_{n,\rho }-1}{\rho }\,u_{n,\rho },\frac{t_{n,\rho }-1}{\rho }\,v_{n,\rho
}\right) \right\rangle .  \notag
\end{eqnarray}%
By $\left( \ref{3.3}\right) $ and (\ref{3.4}), there exists a positive
constant $C$ that is independent of $\rho $ and $n$ such that
\begin{equation}
\lim_{\rho \rightarrow 0^{+}}\frac{\left\vert s_{n,\rho }-1\right\vert }{%
\rho }\leq C\,,\quad \lim_{\rho \rightarrow 0^{+}}\frac{\left\vert t_{n,\rho
}-1\right\vert }{\rho }\leq C,  \label{3.10}
\end{equation}%
and
\begin{equation}
\left\Vert \left( y_{\rho },z_{\rho }\right) -\left( u_{n},v_{n}\right)
\right\Vert _{H}\leq C\left( \rho +\max \left\{ \left\vert s_{n,\rho
}-1\right\vert ,\left\vert t_{n,\rho }-1\right\vert \right\} \right) .
\label{3.11}
\end{equation}%
Setting $\rho \rightarrow 0^{+}$ on (\ref{3.9}), we obtain
\begin{equation*}
\left\langle J_{\varepsilon }^{\prime }\left( u_{n},v_{n}\right) ,\left(
u,v\right) \left\Vert \left( u,v\right) \right\Vert _{H}^{-1}\right\rangle
\leq \frac{C}{n}.
\end{equation*}%
Here we have used (\ref{3.10}), (\ref{3.11}) and the fact that $\left(
y_{\rho },z_{\rho }\right) \rightarrow \left( u_{n},v_{n}\right) $ as $\rho
\rightarrow 0^{+}$. Therefore,
\begin{equation}
\Vert J_{\varepsilon }^{\prime }\left( u_{n},v_{n}\right) \Vert _{H^{\ast
}}\leq \frac{C}{n}\text{ for }n\geq n_{0}.  \label{3.12}
\end{equation}%
Therefore, by (\ref{3.12}), we may complete the proof of Proposition~\ref%
{P3.1}.
\end{proof}

To prove Theorem~\ref{t1.1}, we need the (PS) condition of $J_{\varepsilon }$
as follows:

\begin{proposition}
\label{P3.2}Assume that $\left\{ \left( u_{n},v_{n}\right) \right\} $ is a
sequence in $N_{i,j}\left( \varepsilon \right) $ satisfying\newline
$\left( i\right) \ J_{\varepsilon }\left( u_{n},v_{n}\right) \rightarrow
\varepsilon ^{N}\theta _{\varepsilon }$ as $n\rightarrow \infty ,$ where $%
\theta _{\varepsilon }<\widehat{\alpha }_{1}+\widehat{\alpha }_{2}+\delta
_{0};$\newline
$\left( ii\right) \ J_{\varepsilon }^{\prime }\left( u_{n},v_{n}\right)
\rightarrow 0\;$strongly in $H^{\ast }$ as $n\rightarrow \infty .$\newline
Then there exists a convergent subsequence such that as $n\rightarrow \infty
$, $\left( u_{n},v_{n}\right) \rightarrow \left( u_{0},v_{0}\right) $
strongly in $H,$ where $\left( u_{0},v_{0}\right) \in N_{i,j}\left(
\varepsilon \right) .$
\end{proposition}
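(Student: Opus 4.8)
The plan is to run the usual concentration--compactness scheme for the (unconstrained) Palais--Smale sequence of hypotheses $(i)$--$(ii)$, exploiting the two extra features built into $N_{i,j}(\varepsilon)$: the sub-threshold energy level, $\theta_\varepsilon<\widehat\alpha_1+\widehat\alpha_2+\delta_0$ with $\delta_0\le\frac12\min\{\widehat\alpha_1,\widehat\alpha_2\}$, and the boundedness of the barycentres $\Phi(u_n),\Phi(v_n)$ inside the fixed cubes $C_s(z_{1,i}),C_s(z_{2,j})$. First I would establish boundedness: since $(u_n,v_n)\in N_{i,j}(\varepsilon)\subset\mathbf N_\varepsilon$, Lemma~\ref{L2.1} together with $q\le p$ (as in the proof of Lemma~\ref{L2.3}) gives $J_\varepsilon(u_n,v_n)\ge\frac{p}{2p+2}\|(u_n,v_n)\|_H^2$, so $\{(u_n,v_n)\}$ is bounded in $H$ and, along a subsequence, $(u_n,v_n)\rightharpoonup(u_0,v_0)$ weakly in $H$, strongly in $L^r_{\mathrm{loc}}(\mathbb R^N)$ for $2\le r<2_\alpha^\ast$, and a.e.\ on $\mathbb R^N$. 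Testing $(ii)$ against $(\phi,\psi)\in C_c^\infty\times C_c^\infty$ and passing to the limit term by term — the nonlocal quadratic part by weak convergence, the power and coupling nonlinearities by the local compact embedding together with the boundedness of $\beta$ and a.e.\ convergence — shows that $(u_0,v_0)$ is a critical point of $J_\varepsilon$.

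The heart of the argument is a global--compactness (profile decomposition) analysis of the Palais--Smale sequence in the nonlocal setting, carried out as in the proof of Lemma~3.3 of \cite{LWW} (see also \cite[Lemma~3.1]{CN}): one obtains $\lim_{n\to\infty}J_\varepsilon(u_n,v_n)=J_\varepsilon(u_0,v_0)+\sum_{k}J^{1}_\infty(\omega_1^k)+\sum_{l}J^{2}_\infty(\omega_2^l)$, where the $\omega_1^k$ (resp.\ $\omega_2^l$) are the nontrivial weak limits of translates $u_n(\cdot+y_n^k)$ with $|y_n^k|\to\infty$ (resp.\ of $v_n$), each solving a limiting equation whose potential is $\ge\lambda_1$ (resp.\ $\ge\lambda_2$) — the translated potentials $V_l(\cdot+y_n)$ converge locally uniformly to some $\overline V_l\ge\lambda_l$ by uniform continuity and boundedness — and whose coupling coefficient is $\le0$. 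By the rescaling argument of Lemma~\ref{L2.1} each bump energy satisfies $J^1_\infty(\omega_1^k)\ge\varepsilon^N\widehat\alpha_1$, $J^2_\infty(\omega_2^l)\ge\varepsilon^N\widehat\alpha_2$, while $J_\varepsilon(u_0,v_0)$ is $\ge\varepsilon^N\widehat\alpha_1$ if $u_0\not\equiv0$ and $\ge\varepsilon^N\widehat\alpha_2$ if $v_0\not\equiv0$. Moreover neither $u_n$ nor $v_n$ can vanish in $L^{2p+2}(\mathbb R^N)$: via the Nehari identities that would force $\|u_n\|_{V_1}\to0$ or $\|v_n\|_{V_2}\to0$, contradicting Lemma~\ref{L2.1}; hence the number $a$ of nontrivial $u$--profiles and the number $b$ of nontrivial $v$--profiles are each at least $1$. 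The energy identity then gives $\theta_\varepsilon\ge a\widehat\alpha_1+b\widehat\alpha_2$, so if $a\ge2$ or $b\ge2$ we would get $\theta_\varepsilon\ge\widehat\alpha_1+\widehat\alpha_2+\min\{\widehat\alpha_1,\widehat\alpha_2\}$, contradicting $\theta_\varepsilon<\widehat\alpha_1+\widehat\alpha_2+\delta_0\le\widehat\alpha_1+\widehat\alpha_2+\tfrac12\min\{\widehat\alpha_1,\widehat\alpha_2\}$; thus $a=b=1$.

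Now the barycentre constraint identifies the single $u$--profile with $u_0$ (and likewise the $v$--profile with $v_0$). Indeed, if $a=1$ with $u_0\equiv0$, then $u_n$ equals, up to a remainder vanishing in $L^{2p+2}$ and hence (by the Nehari identity) in $H$, a single translate $\omega_1^1(\cdot-y_n^1)$ with $|y_n^1|\to\infty$, so by the equivariance properties \eqref{1.11}--\eqref{1.12} of $\Phi$ we get $|\Phi(u_n)|\to\infty$, contradicting $\Phi(u_n)\in C_s(z_{1,i})$. Therefore $u_0\not\equiv0$ and there are no escaping $u$--bumps; symmetrically $v_0\not\equiv0$ with no escaping $v$--bumps. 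Consequently the sequences are tight, so $u_n\to u_0$ and $v_n\to v_0$ strongly in $L^r(\mathbb R^N)$ for every $2\le r<2_\alpha^\ast$; in particular $\mu_1|u_n^+|_{2p+2}^{2p+2}\to\mu_1|u_0^+|_{2p+2}^{2p+2}$, $\mu_2|v_n^+|_{2p+2}^{2p+2}\to\mu_2|v_0^+|_{2p+2}^{2p+2}$, and $L_\beta(u_n,v_n)\to L_\beta(u_0,v_0)$ (using $|\beta|$ bounded and the uniform $L^{2p+2}$ bound).

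Finally, subtracting $\langle J_\varepsilon'(u_0,v_0),(u_0,v_0)\rangle=0$ from $\langle J_\varepsilon'(u_n,v_n),(u_n,v_n)\rangle=o(1)$ and using the strong $L^r$ convergence just obtained yields $\|(u_n,v_n)\|_H^2\to\|(u_0,v_0)\|_H^2$; since $H$ is Hilbert and $(u_n,v_n)\rightharpoonup(u_0,v_0)$, this upgrades to $(u_n,v_n)\to(u_0,v_0)$ strongly in $H$. Then $J_\varepsilon(u_0,v_0)=\varepsilon^N\theta_\varepsilon$ and, by continuity of $\Phi$ on $L^2(\mathbb R^N)\setminus\{0\}$, $\Phi(u_0)\in\overline{C_s(z_{1,i})}$, $\Phi(v_0)\in\overline{C_s(z_{2,j})}$; if either lay on the corresponding boundary then $(u_0,v_0)\in O_{i,j}(\varepsilon)$ and \eqref{3.2} would give $\varepsilon^N\theta_\varepsilon=J_\varepsilon(u_0,v_0)\ge\widetilde\gamma_{i,j}(\varepsilon)>\varepsilon^N(\widehat\alpha_1+\widehat\alpha_2+\delta_0)>\varepsilon^N\theta_\varepsilon$, a contradiction, so $(u_0,v_0)\in N_{i,j}(\varepsilon)$. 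The main obstacle is the second paragraph: making the global--compactness decomposition rigorous for the nonlocal operator $(-\triangle)^\alpha$ — in particular the additivity of energies, which needs careful control of the fractional tails when an escaping bump is cut off near infinity — and confirming that a lone escaping profile forces $|\Phi(u_n)|\to\infty$; both are handled as in \cite{LWW,CN}.
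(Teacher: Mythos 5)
Your proposal reaches the right conclusion by a route that is close in spirit to the paper's but packaged quite differently, and one step rests on machinery you neither prove nor can lift verbatim from the cited sources.

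The paper does not invoke a profile decomposition theorem. Its argument is more hands-on: it first proves (Claim~(3.16)) that, along some sequence of translations $z_n$, the coupling integral $\int_{B(z_n,R)}|u_n^+|^{q+1}|v_n^+|^{q+1}\,\mathrm{d}x$ is bounded away from zero. This is shown by contradiction: if the coupling vanishes, then $u_0v_0\equiv0$, so (WLOG) $u_0\equiv0$; extracting an escaping $u$-bump and splitting the sequence $\widehat u_n=\widehat u_0+w_n$ leads either to $|\Phi(u_n)|\to\infty$ (if $w_n\to0$ in $H^\alpha$), contradicting $\Phi(u_n)\in C_s(z_{1,i})$, or to $\varepsilon^{-N}J_\varepsilon(u_n,v_n)\to\theta_\varepsilon\ge 2\widehat\alpha_1+\widehat\alpha_2$ (if $\|w_n\|\ge b>0$), contradicting $\theta_\varepsilon<\widehat\alpha_1+\widehat\alpha_2+\delta_0$. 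Having secured nonvanishing, the paper translates by $z_n$ to a joint limit $(\widetilde u_0,\widetilde v_0)$ with both components nontrivial, passes to the limit with the Arzel\`a--Ascoli limits $V_l^0,\beta_\infty$, and then uses Br\'ezis--Lieb together with Lemmas~\ref{L2.1}--\ref{L2.3} to show that the remainder $(u_n-z_n\ast\widetilde u_0,\;v_n-z_n\ast\widetilde v_0)$ has energy that cannot stay bounded away from zero, hence vanishes in $H$. The barycenter equivariance then bounds $z_n$, and the endgame (rule out $O_{i,j}(\varepsilon)$ via \eqref{3.2}) is the same as yours.

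Your second paragraph replaces the paper's ``at most two bumps, handled by hand'' argument with a full global compactness/profile decomposition statement. Two points there deserve flagging. First, the decomposition for this coupled system should be in terms of \emph{joint} pair-profiles $(\omega^k,\eta^k)$ extracted at the same translates, not independent families of $u$-profiles and $v$-profiles; as written, your energy ``identity'' $\lim J_\varepsilon=J_\varepsilon(u_0,v_0)+\sum_kJ^1_\infty(\omega_1^k)+\sum_lJ^2_\infty(\omega_2^l)$ does not account for the coupling of a joint escaping bump. Since $\beta\le0$ makes the coupling contribution nonnegative, the identity survives as a lower bound, which is all you use -- but you should say so. Second, and more importantly, no reference you cite contains the profile decomposition you invoke: \cite{LWW,CN} treat the local Laplacian and a single equation, and the paper itself deliberately sidesteps the need for such a theorem. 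You acknowledge this is ``the main obstacle,'' but as it stands that obstacle is the crux of the proof, and leaving it to the cited references is a genuine gap. If you want to keep the profile-decomposition viewpoint, you would need to state and prove (or carefully adapt) the splitting lemma for the fractional operator, the variable potentials, and the nonpositive coupling; the paper's more ad hoc two-case contradiction is precisely what lets one avoid that investment.

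The remaining steps of your argument are sound: the ruling out of escaping bumps via $\Phi$-equivariance (your $a=1$, $u_0\equiv0$ case matches the paper's Case~$I$), the upgrade of $L^{2p+2}$-smallness of the remainder to $H$-smallness via the Nehari identities, the norm-convergence argument in $H$, and the final exclusion of $O_{i,j}(\varepsilon)$ using \eqref{3.2}.
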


\begin{proof}
It follows from (\ref{1.14}) and $(i)$ that
\begin{equation*}
\frac{p}{2p+2}\left\Vert \left( u_{n},v_{n}\right) \right\Vert _{H}^{2}\leq
J_{\varepsilon }\left( u_{n},v_{n}\right) \rightarrow \varepsilon ^{N}\theta
_{\varepsilon }\text{ as }n\rightarrow \infty ,
\end{equation*}%
and $\left\{ \left( u_{n},v_{n}\right) \right\} $ is bounded in $H.$ Thus,
there exists a convergent subsequence of $\left\{ \left( u_{n},v_{n}\right)
\right\} $ (denoted as $\left\{ \left( u_{n},v_{n}\right) \right\} $ for
notation convenience) such that as $n\rightarrow \infty $,
\begin{equation}
\left( u_{n},v_{n}\right) \rightharpoonup \left( u_{0},v_{0}\right) \text{
weakly in }H\,,  \label{3.13}
\end{equation}%
where $\left( u_{0},v_{0}\right) \in H$. By $0<q\leq p\leq 1$ if $N\leq
4\alpha $, $0<q\leq p<\frac{2\alpha }{N-2\alpha }$ if $N>4\alpha $, then in
view of (\ref{3.13}) and the Sobolev compact embedding, we have
\begin{equation}
\left( u_{n},v_{n}\right) \rightarrow \left( u_{0},v_{0}\right) ~\text{%
strongly in }L_{loc}^{r}\left( \mathbb{R}^{N}\right) \times
L_{loc}^{r}\left( \mathbb{R}^{N}\right) \text{ for }2<r<2_{\alpha }^{\ast }
\label{3.14}
\end{equation}%
and
\begin{equation}
\left( u_{n},v_{n}\right) \rightarrow \left( u_{0},v_{0}\right) ~\text{a.e.
in }\mathbb{R}^{N}.  \label{3.15}
\end{equation}%
Now we claim that there exist a subsequence $\left\{ \left(
u_{n},v_{n}\right) \right\} $ and a sequence $\{z_{n}\}\subset \mathbb{R}%
^{N} $ such that
\begin{equation}
\int_{B^{N}\left( z_{n},R\right) }\left\vert u_{n}^{+}\right\vert
^{q+1}\left\vert v_{n}^{+}\right\vert ^{q+1}\mathrm{d}x\geq d_{0}>0,\quad
\forall ~n\in \mathbb{N},  \label{3.16}
\end{equation}%
where $d_{0}$ and $R$ are positive constants that are independent of $n.$
Suppose on the contrary. Thus, for all $R>0$,
\begin{equation*}
\sup_{x\in \mathbb{R}^{N}}\int_{B^{N}\left( x,R\right) }\left\vert
u_{n}^{+}\right\vert ^{q+1}\left\vert v_{n}^{+}\right\vert ^{q+1}\mathrm{d}%
x\rightarrow 0\text{ as }n\rightarrow \infty .
\end{equation*}%
Then, similarly to the argument of Lemma I.1 in \cite{Li1} (see also \cite%
{Wi}), we have
\begin{equation*}
\int_{\mathbb{R}^{N}}\left\vert u_{n}^{+}\right\vert ^{q+1}\left\vert
v_{n}^{+}\right\vert ^{q+1}\mathrm{d}x\rightarrow 0\text{ as }n\rightarrow
\infty ,
\end{equation*}%
this implies that%
\begin{equation}
\left\vert \int_{\mathbb{R}^{N}}\beta (x)\left\vert u_{n}^{+}\right\vert
^{q+1}\left\vert v_{n}^{+}\right\vert ^{q+1}\mathrm{d}x\right\vert
\rightarrow 0\text{ as }n\rightarrow \infty .  \label{3.17}
\end{equation}%
Therefore, by~$(ii)$, (\ref{3.13}), (\ref{3.14}) and (\ref{3.17}), the
functions $u_{0}$ and $v_{0}$ satisfy
\begin{equation}
\varepsilon ^{2\alpha }(-\triangle )^{\alpha }u_{0}+V_{1}(x)u_{0}=\mu
_{1}|u_{0}|^{2p}u_{0}^{+},\text{ in }\mathbb{R}^{N}  \label{3.18}
\end{equation}%
and
\begin{equation}
\varepsilon ^{2\alpha }(-\triangle )^{\alpha }v_{0}+V_{2}(x)u_{0}=\mu
_{2}|v_{0}|^{2p}v_{0}^{+},\text{ in }\mathbb{R}^{N}.  \label{3.19}
\end{equation}%
On the other hand, by hypethesis $\left( ii\right) $ and (\ref{3.15}), we
have
\begin{equation*}
u_{0},v_{0}\geq 0\text{ in }\mathbb{R}^{N}.
\end{equation*}%
Furthermore, it follows from (\ref{3.15}) and (\ref{3.17}) that
\begin{equation}
u_{0}v_{0}\equiv 0.  \label{3.20}
\end{equation}%
Thus, by (\ref{3.18})--(\ref{3.20}) and the strong maximum principle for the
fractional Laplacian \cite{Sil}, we have $u_{0}\equiv 0$ or $v_{0}\equiv 0$.
Without loss of generality, we may assume that $u_{0}\equiv 0$. By Lemma~\ref%
{L2.1} and the concentration--compactness principle (cf. \cite{Li1,Li2}),
there are positive constants $R,b$ and a sequence $\left\{ x_{n}\right\}
_{n=1}^{\infty }\subset \mathbb{R}^{N}$ such that
\begin{equation}
\int_{B^{N}\left( 0;R\right) }\left\vert \left( -x_{n}\ast u_{n}\right)
^{+}\right\vert ^{2p+2}\mathrm{d}x\geq b~\text{for}~n~\text{sufficiently
large.}  \label{3.21}
\end{equation}%
By $\left( \ref{3.14}\right) $, $\left( \ref{3.21}\right) $ and $u_{0}\equiv
0$, it is easy to verify that $\left\{ x_{n}\right\} $ is an unbounded
sequence in $\mathbb{R}^{N}.$ Let $\widehat{u}_{n}\left( x\right)
=(-x_{n}\ast u_{n})(x)$. Then sequence $\left\{ \widehat{u}_{n}\right\} $ is
bounded in $H^{\alpha }\left( \mathbb{R}^{N}\right) ,$ so we may assume that
there exists $\widehat{u}_{0}\in H^{\alpha }\left( \mathbb{R}^{N}\right) $
such that
\begin{equation}
\widehat{u}_{n}\rightharpoonup \widehat{u}_{0}\text{ weakly in }H^{\alpha
}\left( \mathbb{R}^{N}\right)  \label{3.22}
\end{equation}%
and
\begin{equation}
\widehat{u}_{n}\rightarrow \widehat{u}_{0}\text{ strongly\ in }%
L_{loc}^{2p+2}\left( \mathbb{R}^{N}\right) .  \label{3.23}
\end{equation}%
It follows from $\left( \ref{3.21}\right) $ and $\left( \ref{3.23}\right) $
that $\widehat{u}_{0}\not\equiv 0$ in $\mathbb{R}^{N}.$ Set $w_{n}=\widehat{u%
}_{n}-\widehat{u}_{0}.$ Then we may divide the proof into the following
cases:\newline
Case $I:$ $\int_{\mathbb{R}^{N}}\left( |(-\triangle )^{\frac{\alpha }{2}%
}w_{n}|^{2}+\lambda _{1}w_{n}^{2}\right) \mathrm{d}x\rightarrow 0$ as $%
n\rightarrow \infty ;$\newline
Case $II:$ $\int_{\mathbb{R}^{N}}\left( |(-\triangle )^{\frac{\alpha }{2}%
}w_{n}|^{2}+\lambda _{1}w_{n}^{2}\right) \mathrm{d}x\geq b$ for large $n$
and for some constant $b>0.$

Suppose Case~$I$ holds. Then
\begin{equation*}
x_{n}=\Phi \left( u_{n}\right) -\Phi \left( \widehat{u}_{0}\right) +o(1),
\end{equation*}%
which implies $\left\vert \Phi \left( u_{n}\right) \right\vert \rightarrow
\infty $ as $n\rightarrow \infty ,$ this contradicts $\Phi \left(
u_{n}\right) \in C_{s}\left( z_{1,i}\right) .$ Here, we have used the fact
that $\left\{ x_{n}\right\} $ is a unbounded sequence in $\mathbb{R}^{N}.$

Suppose Case $II$ holds. Since $\ J_{\varepsilon }^{\prime
}(u_{n},v_{n})\rightarrow 0\;$strongly in $H^{\ast },$ then by (\ref{3.17}),
we have
\begin{equation}
\int_{\mathbb{R}^{N}}\left( \varepsilon ^{2\alpha }|(-\triangle )^{\frac{%
\alpha }{2}}u_{n}|^{2}+V_{1}(x)u_{n}^{2}\right) \mathrm{d}x=\mu _{1}\int_{%
\mathbb{R}^{N}}\left\vert u_{n}^{+}\right\vert ^{2p+2}\mathrm{d}x+o\left(
1\right)  \label{3.24}
\end{equation}%
and
\begin{equation*}
\int_{\mathbb{R}^{N}}\left( \varepsilon ^{2\alpha }|(-\triangle )^{\frac{%
\alpha }{2}}v_{n}|^{2}+V_{2}(x)v_{n}^{2}\right) \mathrm{d}x=\mu _{2}\int_{%
\mathbb{R}^{N}}\left\vert v_{n}^{+}\right\vert ^{2p+2}\mathrm{d}x+o\left(
1\right) .
\end{equation*}%
By $\left( \ref{3.24}\right) $ and Br\'{e}zis--Lieb lemma \cite{BLi}, we
obtain%
\begin{equation*}
\int_{\mathbb{R}^{N}}\left( \varepsilon ^{2\alpha }|(-\triangle )^{\frac{%
\alpha }{2}}w_{n}|^{2}+V_{1}(x)w_{n}^{2}\right) \mathrm{d}x=\mu _{1}\int_{%
\mathbb{R}^{N}}\left\vert w_{n}^{+}\right\vert ^{2p+2}\mathrm{d}x+o\left(
1\right) .
\end{equation*}%
Because $\int_{\mathbb{R}^{N}}\left( |(-\triangle )^{\frac{\alpha }{2}%
}\omega _{n}|^{2}+\lambda _{1}w_{n}^{2}\right) dx\geq b$ for large $n,$ it
is easy to find a sequence $\left\{ s_{n}\right\} \subset \mathbb{R}^{+}$
with $s_{n}\rightarrow 1$ as $n\rightarrow \infty $ such that
\begin{equation*}
s_{n}^{2}\int_{\mathbb{R}^{N}}\left( \varepsilon ^{2\alpha }\left\vert
(-\triangle )^{\frac{\alpha }{2}}w_{n}\right\vert ^{2}+(-x_{n}\ast
V_{1})w_{n}^{2}\right) \mathrm{d}x=\mu _{1}s_{n}^{2p+2}\int_{\mathbb{R}%
^{N}}\left\vert w_{n}^{+}\right\vert ^{2p+2}\mathrm{d}x
\end{equation*}%
and so%
\begin{equation*}
\frac{1}{2}\int_{\mathbb{R}^{N}}\left( \varepsilon ^{2\alpha }\left\vert
(-\triangle )^{\frac{\alpha }{2}}w_{n}\right\vert ^{2}+(-x_{n}\ast
V_{1})w_{n}^{2}\right) \mathrm{d}x-\frac{\mu _{1}}{2p+2}\int_{\mathbb{R}%
^{N}}\left\vert w_{n}^{+}\right\vert ^{2p+2}\mathrm{d}x\geq \varepsilon ^{N}%
\widehat{\alpha }_{1}+o\left( 1\right) .
\end{equation*}%
Analogously,%
\begin{equation*}
\frac{1}{2}\int_{\mathbb{R}^{N}}\left( \varepsilon ^{2\alpha }\left\vert
(-\triangle )^{\frac{\alpha }{2}}v_{n}\right\vert ^{2}+(-x_{n}\ast
V_{2})v_{n}^{2}\right) \mathrm{d}x-\frac{\mu _{2}}{2p+2}\int_{\mathbb{R}%
^{N}}\left\vert v_{n}^{+}\right\vert ^{2p+2}\mathrm{d}x\geq \varepsilon ^{N}%
\widehat{\alpha }_{2}+o\left( 1\right) .
\end{equation*}%
Furthermore, $\left\langle J_{\varepsilon }^{\prime }(u_{n},v_{n}),\left(
x_{n}\ast \widehat{u}_{0},0\right) \right\rangle \rightarrow 0.$ By $\left( %
\ref{3.22}\right) $ and $\left( \ref{3.23}\right) ,$ we have%
\begin{equation*}
\int_{\mathbb{R}^{N}}\left( \varepsilon ^{2\alpha }\left\vert (-\triangle )^{%
\frac{\alpha }{2}}\widehat{u}_{0}\right\vert ^{2}+(-x_{n}\ast V_{1})\widehat{%
u}_{0}^{2}\right) \mathrm{d}x-\mu _{1}\int_{\mathbb{R}^{N}}\left\vert
\widehat{u}_{0}^{+}\right\vert ^{2p+2}\mathrm{d}x=o\left( 1\right) ,
\end{equation*}%
which implies that
\begin{equation*}
\frac{1}{2}\int_{\mathbb{R}^{N}}\left( \varepsilon ^{2\alpha }\left\vert
(-\triangle )^{\frac{\alpha }{2}}\widehat{u}_{0}\right\vert ^{2}+(-x_{n}\ast
V_{1})\widehat{u}_{0}^{2}\right) \mathrm{d}x-\frac{\mu _{1}}{2p+2}\int_{%
\mathbb{R}^{N}}\left\vert \widehat{u}_{0}^{+}\right\vert ^{2p+2}\mathrm{d}%
x\geq \varepsilon ^{N}\widehat{\alpha }_{1}+o\left( 1\right) .
\end{equation*}%
It follows from $\left( \ref{1.14}\right) ,$ $\left( \ref{3.17}\right) $ and
Br\'{e}zis--Lieb lemma \cite{BLi} that
\begin{eqnarray*}
J_{\varepsilon }\left( u_{n},v_{n}\right) &=&\frac{1}{2}\int_{\mathbb{R}%
^{N}}\left( \varepsilon ^{2\alpha }\left\vert (-\triangle )^{\frac{\alpha }{2%
}}\widehat{w}_{n}\right\vert ^{2}+(-x_{n}\ast V_{1})\widehat{w}%
_{n}^{2}\right) \mathrm{d}x-\frac{\mu _{1}}{2p+2}\int_{\mathbb{R}%
^{N}}\left\vert \widehat{w}_{n}^{+}\right\vert ^{2p+2}\mathrm{d}x \\
&&+\frac{1}{2}\int_{\mathbb{R}^{N}}\left( \varepsilon ^{2\alpha }\left\vert
(-\triangle )^{\frac{\alpha }{2}}\widehat{u}_{0}\right\vert ^{2}+(-x_{n}\ast
V_{1})\widehat{u}_{0}^{2}\right) \mathrm{d}x-\frac{\mu _{1}}{2p+2}\int_{%
\mathbb{R}^{N}}\left\vert \widehat{u}_{0}^{+}\right\vert ^{2p+2}\mathrm{d}x
\\
&&+\frac{1}{2}\int_{\mathbb{R}^{N}}\left( \varepsilon ^{2\alpha }\left\vert
(-\triangle )^{\frac{\alpha }{2}}\widehat{v}_{n}\right\vert ^{2}+V_{2}\left(
x\right) v_{n}^{2}\right) \mathrm{d}x-\frac{\mu _{2}}{2p+2}\int_{\mathbb{R}%
^{N}}\left\vert v_{n}^{+}\right\vert ^{2p+2}\mathrm{d}x+o\left( 1\right) \\
&\geq &2\varepsilon ^{N}\widehat{\alpha }_{1}+\varepsilon ^{N}\widehat{%
\alpha }_{2}+o\left( 1\right) ,
\end{eqnarray*}%
which implies that
\begin{equation*}
\varepsilon ^{N}\theta _{\varepsilon }=\lim_{n\rightarrow \infty
}J_{\varepsilon }\left( u_{n},v_{n}\right) \geq 2\varepsilon ^{N}\widehat{%
\alpha }_{1}+\varepsilon ^{N}\widehat{\alpha }_{2}.
\end{equation*}%
However, $\theta _{\varepsilon }<\widehat{\alpha }_{1}+\widehat{\alpha }%
_{2}+\delta _{0}$ and $0<\delta _{0}\leq \frac{1}{2}\min \{\widehat{\alpha }%
_{1},\widehat{\alpha }_{2}\}.$ Hence, we find a contradiction and complete
the proof of (\ref{3.16}). Let $\left( \widetilde{u}_{n},\widetilde{v}%
_{n}\right) =\left( -z_{n}\ast u_{n},-z_{n}\ast v_{n}\right) $ for $n\in
\mathbb{N}$, where the operation $\ast $ is defined in (\ref{1.11}). Then as
for (\ref{3.13})--(\ref{3.14}), we have
\begin{eqnarray}
\left( \widetilde{u}_{n},\widetilde{v}_{n}\right) &\rightharpoonup &\left(
\widetilde{u}_{0},\widetilde{v}_{0}\right) \text{ weakly in }H,  \label{3.27}
\\
\left( \widetilde{u}_{n},\widetilde{v}_{n}\right) &\rightarrow &\left(
\widetilde{u}_{0},\widetilde{v}_{0}\right) \text{ strongly in }%
L_{loc}^{r}\left( \mathbb{R}^{N}\right) \times L_{loc}^{r}\left( \mathbb{R}%
^{N}\right) \text{ and for all }2<r<2_{\alpha }^{\ast },  \label{3.28} \\
\left( \widetilde{u}_{n},\widetilde{v}_{n}\right) &\rightarrow &\left(
\widetilde{u}_{0},\widetilde{v}_{0}\right) \text{ a.e. in }\mathbb{R}^{N},
\label{3.29}
\end{eqnarray}%
where $\left( \widetilde{u}_{0},\widetilde{v}_{0}\right) \in H$. Besides, it
follows from $\left( \ref{3.16}\right) $ and $\left( \ref{3.28}\right) $
that
\begin{equation*}
\int_{B^{N}\left( 0,R\right) }\left\vert \widetilde{u}_{0}^{+}\right\vert
^{q+1}\left\vert \widetilde{v}_{0}^{+}\right\vert ^{q+1}\mathrm{d}x\geq
d_{0},
\end{equation*}%
which implies $\widetilde{u}_{0}^{+}\not\equiv 0$ and $\widetilde{v}%
_{0}^{+}\not\equiv 0$ in $\mathbb{R}^{N}.$ Because $\beta \left( x\right) $
and $V_{l}(x)$ are uniformly continuous for each $l\in \{1,2\}$, the
sequences $\left\{ -z_{n}\ast \beta \right\} $ and $\left\{ -z_{n}\ast
V_{l}\right\} $ are equicontinuous, the Arzel\`{a}--Ascoli theorem allows us
to assume that $-z_{n}\ast \beta \rightarrow \beta _{\infty }\in C\left(
\mathbb{R}^{N}\right) $ and $-z_{n}\ast V_{l}\rightarrow V_{l}^{0}\in
C\left( \mathbb{R}^{N}\right) $ uniformly on bounded subsets of $\mathbb{R}%
^{N}.$ Then%
\begin{eqnarray*}
\int_{\mathbb{R}^{N}}\left( -z_{n}\ast \beta \right) \left\vert \widetilde{u}%
_{n}^{+}\right\vert ^{q-1}\widetilde{u}_{n}^{+}\left\vert \widetilde{v}%
_{n}^{+}\right\vert ^{q+1}\varphi _{1}\mathrm{d}x &\rightarrow &\int_{%
\mathbb{R}^{N}}\beta _{\infty }(x)\left\vert \widetilde{u}%
_{0}^{+}\right\vert ^{q-1}\widetilde{u}_{0}^{+}\left\vert \widetilde{v}%
_{0}^{+}\right\vert ^{q+1}\varphi _{1}\mathrm{d}x\text{ as }n\rightarrow
\infty , \\
\int_{\mathbb{R}^{N}}\left( -z_{n}\ast \beta \right) \left\vert \widetilde{u}%
_{n}^{+}\right\vert ^{q+1}\left\vert \widetilde{v}_{n}^{+}\right\vert ^{q-1}%
\widetilde{v}_{n}^{+}\varphi _{2}\mathrm{d}x &\rightarrow &\int_{\mathbb{R}%
^{N}}\beta _{\infty }(x)\left\vert \widetilde{u}_{0}^{+}\right\vert
^{q+1}\left\vert \widetilde{v}_{0}^{+}\right\vert ^{q+1}\widetilde{v}%
_{0}^{+}\varphi _{2}\mathrm{d}x\text{ as }n\rightarrow \infty
\end{eqnarray*}%
and%
\begin{eqnarray*}
\int_{\mathbb{R}^{N}}\left( -z_{n}\ast V_{1}\right) \widetilde{u}_{n}\varphi
\mathrm{d}x &\rightarrow &\int_{\mathbb{R}^{N}}V_{1}^{0}(x)\widetilde{u}%
_{0}\varphi _{1}\mathrm{d}x\text{ as }n\rightarrow \infty , \\
\int_{\mathbb{R}^{N}}\left( -z_{n}\ast V_{2}\right) \widetilde{v}_{n}\varphi
_{2}\mathrm{d}x &\rightarrow &\int_{\mathbb{R}^{N}}V_{2}^{0}(x)\widetilde{v}%
_{0}\varphi _{2}\mathrm{d}x\text{ as }n\rightarrow \infty ,
\end{eqnarray*}%
for all $\varphi _{1},\varphi _{2}\in C_{0}^{\infty }\left( \mathbb{R}%
^{N}\right) $. Therefore, by (\ref{3.27}), (\ref{3.28}) and the hypothesis $%
(ii)$, we obtain
\begin{eqnarray*}
&&\int_{\mathbb{R}^{N}}\left( \varepsilon ^{2\alpha }(-\triangle )^{\frac{%
\alpha }{2}}\widetilde{u}_{0}(-\triangle )^{\frac{\alpha }{2}}\varphi
_{1}+V_{1}^{0}\widetilde{u}_{0}\varphi _{1}\right) \mathrm{d}x-\mu _{1}\int_{%
\mathbb{R}^{N}}\left\vert \widetilde{u}_{0}^{+}\right\vert ^{2p}\widetilde{u}%
_{0}^{+}\varphi _{1}\mathrm{d}x \\
&&+\int_{\mathbb{R}^{N}}\left( \varepsilon ^{2\alpha }(-\triangle )^{\frac{%
\alpha }{2}}\widetilde{v}_{0}(-\triangle )^{\frac{\alpha }{2}}\varphi
_{2}+V_{2}^{0}\widetilde{v}_{0}\varphi _{2}\right) \mathrm{d}x-\mu _{2}\int_{%
\mathbb{R}^{N}}\left\vert \widetilde{v}_{0}^{+}\right\vert ^{2p}\widetilde{v}%
_{0}^{+}\varphi _{2}\mathrm{d}x \\
&&-\int_{\mathbb{R}^{N}}\beta _{\infty }(x)\left\vert \widetilde{u}%
_{0}\right\vert ^{q-1}\widetilde{u}_{0}\left\vert \widetilde{v}%
_{0}\right\vert ^{q+1}\varphi _{1}\mathrm{d}x-\int_{\mathbb{R}^{N}}\beta
_{\infty }(x)\left\vert \widetilde{u}_{0}^{+}\right\vert ^{q+1}\left\vert
\widetilde{v}_{0}^{+}\right\vert ^{q-1}\widetilde{v}_{0}^{+}\varphi _{2}%
\mathrm{d}x \\
&=&\lim_{n\rightarrow \infty }\left\langle J_{\varepsilon }^{\prime }\left(
\widetilde{u}_{n},\widetilde{v}_{n}\right) ,\left( \varphi _{1},\varphi
_{2}\right) \right\rangle \\
&=&\lim_{n\rightarrow \infty }\left\langle J_{\varepsilon }^{\prime }\left(
u_{n},v_{n}\right) ,\left( z_{n}\ast \varphi _{1}\,,z_{n}\ast \varphi
_{2}\right) \right\rangle =0,\quad \forall ~\varphi _{1},\varphi _{2}\in
C_{0}^{\infty }\left( \mathbb{R}^{N}\right) ,
\end{eqnarray*}%
i.e., $\left( \widetilde{u}_{0},\widetilde{v}_{0}\right) $ is a nonnegative
solution of the following problem
\begin{equation*}
\left\{
\begin{array}{ll}
\varepsilon ^{2\alpha }(-\triangle )^{\alpha }u+V_{1}^{0}\left( x\right)
u=\left( \mu _{1}|u|^{2p}+\beta _{\infty }\left( x\right)
|u|^{q-1}|v|^{q+1}\right) u, & \text{in }\mathbb{R}^{N}, \\
\varepsilon ^{2\alpha }(-\triangle )^{\alpha }v+V_{2}^{0}\left( x\right)
v=\left( \mu _{2}|v|^{2p}+\beta _{\infty }\left( x\right)
|v|^{q-1}|u^{+}|^{q+1}\right) v, & \text{in }\mathbb{R}^{N}.%
\end{array}%
\right.
\end{equation*}%
Hence%
\begin{equation}
\int_{\mathbb{R}^{N}}\left( \varepsilon ^{2\alpha }|(-\triangle )^{\frac{%
\alpha }{2}}\widetilde{u}_{0}|^{2}+V_{1}^{0}(x)\widetilde{u}_{0}^{2}\right)
\mathrm{d}x=\int_{\mathbb{R}^{N}}\left( \mu _{1}\left\vert \widetilde{u}%
_{0}^{+}\right\vert ^{2p+2}+\beta _{\infty }(x)\left\vert \widetilde{u}%
_{0}^{+}\right\vert ^{q+1}\left\vert \widetilde{v}_{0}^{+}\right\vert
^{q+1}\right) \mathrm{d}x  \label{3.30}
\end{equation}%
and
\begin{equation}
\int_{\mathbb{R}^{N}}\left( \varepsilon ^{2\alpha }|(-\triangle )^{\frac{%
\alpha }{2}}\widetilde{v}_{0}|^{2}+V_{2}^{0}(x)\widetilde{v}_{0}^{2}\right)
\mathrm{d}x=\int_{\mathbb{R}^{N}}\left( \mu _{2}\left\vert \widetilde{v}%
_{0}^{+}\right\vert ^{2p+2}+\beta _{\infty }(x)\left\vert \widetilde{u}%
_{0}^{+}\right\vert ^{q+1}\left\vert \widetilde{v}_{0}^{+}\right\vert
^{q+1}\right) \mathrm{d}x.  \label{3.31}
\end{equation}%
Then in view of Lemma \ref{L2.3}, we obtain
\begin{equation}
I_{\varepsilon }\left( \widetilde{u}_{0},\widetilde{v}_{0}\right) \geq
\varepsilon ^{N}\left( \widehat{\alpha }_{1}+\widehat{\alpha }_{2}\right) ,
\label{3.32}
\end{equation}%
where
\begin{eqnarray*}
I_{\varepsilon }\left( u,v\right) &=&\frac{1}{2}\int_{\mathbb{R}^{N}}\left(
\varepsilon ^{2\alpha }|(-\triangle )^{\frac{\alpha }{2}%
}u|^{2}+V_{1}^{0}(x)u^{2}\right) \mathrm{d}x+\frac{1}{2}\int_{\mathbb{R}%
^{N}}\left( \varepsilon ^{2\alpha }|(-\triangle )^{\frac{\alpha }{2}%
}v|^{2}+V_{2}^{0}(x)v^{2}\right) \mathrm{d}x \\
&&-\frac{1}{2p+2}\left( \int_{\mathbb{R}^{N}}\mu _{1}\left\vert
u^{+}\right\vert ^{2p+2}\mathrm{d}x+\int_{\mathbb{R}^{N}}\mu _{2}\left\vert
v^{+}\right\vert ^{2p+2}\right) \mathrm{d}x \\
&&-\frac{1}{q+1}\int_{\mathbb{R}^{N}}\beta _{\infty }(x)\left\vert
u^{+}\right\vert ^{q+1}\left\vert v^{+}\right\vert ^{q+1}\mathrm{d}x,
\end{eqnarray*}%
for $(u,v)\in H.$ Let $\left( \widehat{u}_{n},\widehat{v}_{n}\right) =\left(
u_{n}-z_{n}\ast \widetilde{u}_{0},v_{n}-z_{n}\ast \widetilde{v}_{0}\right) ,$
i.e.
\begin{equation*}
\left( -z_{n}\ast \widehat{u}_{n},-z_{n}\ast \widehat{v}_{n}\right) =\left(
\widetilde{u}_{n}-\widetilde{u}_{0},\widetilde{v}_{n}-\widetilde{v}%
_{0}\right) .
\end{equation*}%
By (\ref{3.27}), it is evident that
\begin{equation}
\int_{\mathbb{R}^{N}}\left( \varepsilon ^{2\alpha }\left\vert (-\triangle )^{%
\frac{\alpha }{2}}\left( \widetilde{u}_{n}-\widetilde{u}_{0}\right)
\right\vert ^{2}-\varepsilon ^{2\alpha }\left\vert (-\triangle )^{\frac{%
\alpha }{2}}\widetilde{u}_{n}\right\vert ^{2}+\varepsilon ^{2\alpha
}\left\vert (-\triangle )^{\frac{\alpha }{2}}\widetilde{u}_{0}\right\vert
^{2}\right) \mathrm{d}x=o\left( 1\right)  \label{3.33}
\end{equation}%
and
\begin{equation}
\int_{\mathbb{R}^{N}}\left( \varepsilon ^{2\alpha }\left\vert (-\triangle )^{%
\frac{\alpha }{2}}\left( \widetilde{v}_{n}-\widetilde{v}_{0}\right)
\right\vert ^{2}-\varepsilon ^{2\alpha }\left\vert (-\triangle )^{\frac{%
\alpha }{2}}\widetilde{v}_{n}\right\vert ^{2}+\varepsilon ^{2\alpha
}\left\vert (-\triangle )^{\frac{\alpha }{2}}\widetilde{v}_{0}\right\vert
^{2}\right) \mathrm{d}x=o\left( 1\right) .  \label{3.34}
\end{equation}%
Furthermore, since $-z_{n}\ast \beta \rightarrow \beta _{\infty }(x)$ and $%
-z_{n}\ast V_{l}\rightarrow V_{l}^{0}(x)$ uniformly on bounded subsets for $%
l=1,2$, we may conclude that
\begin{equation}
\int_{\mathbb{R}^{N}}\left[ \left( -z_{n}\ast V_{1}\right) \left( \widetilde{%
u}_{n}-\widetilde{u}_{0}\right) ^{2}-\left( -z_{n}\ast V_{1}\right)
\widetilde{u}_{n}^{2}+V_{1}^{0}(x)\widetilde{u}_{0}^{2}\right] \mathrm{d}%
x=o\left( 1\right)  \label{3.35}
\end{equation}%
and
\begin{equation}
\int_{\mathbb{R}^{N}}\left[ \left( -z_{n}\ast V_{2}\right) \left( \widetilde{%
v}_{n}-\widetilde{v}_{0}\right) ^{2}-\left( -z_{n}\ast V_{2}\right)
\widetilde{v}_{n}^{2}+V_{2}^{0}(x)\widetilde{v}_{0}^{2}\right] \mathrm{d}%
x=o\left( 1\right) .  \label{3.36}
\end{equation}%
On the other hand, it follows from Br\'{e}zis--Lieb Lemma (cf.\cite{BLi})
that
\begin{eqnarray}
\int_{\mathbb{R}^{N}}\left[ \left\vert \left( \widetilde{u}_{n}-\widetilde{u}%
_{0}\right) ^{+}\right\vert ^{2p+2}-\left\vert \widetilde{u}%
_{n}^{+}\right\vert ^{2p+2}+\left\vert \widetilde{u}_{0}^{+}\right\vert
^{2p+2}\right] \mathrm{d}x &=&o\left( 1\right) ,  \label{3.37} \\
\int_{\mathbb{R}^{N}}\left[ \left\vert \left( \widetilde{v}_{n}-\widetilde{v}%
_{0}\right) ^{+}\right\vert ^{2p+2}-\left\vert \widetilde{v}%
_{n}^{+}\right\vert ^{2p+2}+\left\vert \widetilde{v}_{0}^{+}\right\vert
^{2p+2}\right] \mathrm{d}x &=&o\left( 1\right) ,  \label{3.38}
\end{eqnarray}%
and
\begin{eqnarray}
&&\int_{\mathbb{R}^{N}}\left( -z_{n}\ast \beta \right) \left\vert \left(
\widetilde{u}_{n}-\widetilde{u}_{0}\right) ^{+}\right\vert ^{q+1}\left\vert
\left( \widetilde{v}_{n}-\widetilde{v}_{0}\right) ^{+}\right\vert ^{q+1}%
\mathrm{d}x  \notag \\
&=&\int_{\mathbb{R}^{N}}\left( -z_{n}\ast \beta \right) \left\vert
\widetilde{u}_{n}^{+}\right\vert ^{q+1}\left\vert \widetilde{v}%
_{n}^{+}\right\vert ^{q+1}\mathrm{d}x+\int_{\mathbb{R}^{N}}\beta _{\infty
}(x)\left\vert \widetilde{u}_{0}^{+}\right\vert ^{q+1}\left\vert \widetilde{v%
}_{0}^{+}\right\vert ^{q+1}\mathrm{d}x+o\left( 1\right) .  \label{3.39}
\end{eqnarray}%
By (\ref{3.30}), (\ref{3.31}) and $\left( \ref{3.33}\right) -\left( \ref%
{3.39}\right) $, we have%
\begin{eqnarray}
&&\int_{\mathbb{R}^{N}}\left( \varepsilon ^{2\alpha }\left\vert (-\triangle
)^{\frac{\alpha }{2}}\widehat{u}_{n}\right\vert ^{2}+V_{1}(x)\widehat{u}%
_{n}^{2}\right) \mathrm{d}x=\mu _{1}\int_{\mathbb{R}^{N}}\left\vert \widehat{%
u}_{n}^{+}\right\vert ^{2p+2}\mathrm{d}x  \notag \\
&&+\int_{\mathbb{R}^{N}}\beta (x)\left\vert \widehat{u}_{n}^{+}\right\vert
^{q+1}\left\vert \widehat{v}_{n}^{+}\right\vert ^{q+1}\mathrm{d}x+o\left(
1\right) ,  \label{3.40}
\end{eqnarray}%
\begin{eqnarray}
&&\int_{\mathbb{R}^{N}}\left( \varepsilon ^{2\alpha }\left\vert (-\triangle
)^{\frac{\alpha }{2}}\widehat{v}_{n}\right\vert ^{2}+V_{2}\widehat{v}%
_{n}^{2}\right) \mathrm{d}x=\mu _{2}\int_{\mathbb{R}^{N}}\left\vert \widehat{%
v}_{n}^{+}\right\vert ^{2p+2}\mathrm{d}x  \notag \\
&&+\int_{\mathbb{R}^{N}}\beta (x)\left\vert \widehat{u}_{n}^{+}\right\vert
^{q+1}\left\vert \widehat{v}_{n}^{+}\right\vert ^{q+1}\mathrm{d}x+o\left(
1\right)  \label{3.41}
\end{eqnarray}%
and
\begin{equation}
J_{\varepsilon }\left( \widehat{u}_{n},\widehat{v}_{n}\right)
=J_{\varepsilon }\left( u_{n},v_{n}\right) -I_{\varepsilon }\left(
u_{0},v_{0}\right) +o(1)=\varepsilon ^{N}\theta _{\varepsilon
}-I_{\varepsilon }\left( u_{0},v_{0}\right) +o(1).  \label{3.42}
\end{equation}%
Now we want to show the strong convergence of $\left\Vert \left( \widehat{u}%
_{n},\widehat{v}_{n}\right) \right\Vert _{H}\rightarrow 0$ as $n\rightarrow
\infty .$ Suppose not, then $\left\Vert \left( \widehat{u}_{n},\widehat{v}%
_{n}\right) \right\Vert _{H}\geq c_{0}$ for large $n$, where $c_{0}$ is a
positive constant independent of $n$. Hence, it follows from (\ref{3.40}), (%
\ref{3.41}), Lemma \ref{L2.1} and Lemma \ref{L2.3} that
\begin{equation}
J_{\varepsilon }\left( \widehat{u}_{n},\widehat{v}_{n}\right) \geq \frac{%
\varepsilon ^{N}}{2}\min \left\{ \overline{\delta },\widehat{\alpha }_{1},%
\widehat{\alpha }_{2}\right\} ,  \label{3.43}
\end{equation}%
for $n$ sufficiently large. Thus, in view of (\ref{3.32}), (\ref{3.42}) and (%
\ref{3.43}), we obtain
\begin{equation*}
\varepsilon ^{N}\theta _{\varepsilon }\geq \varepsilon ^{N}\left( \widehat{%
\alpha }_{1}+\widehat{\alpha }_{2}\right) +\frac{\varepsilon ^{N}}{2}\min
\left\{ \overline{\delta },\widehat{\alpha }_{1},\widehat{\alpha }%
_{2}\right\} .
\end{equation*}%
which is a contradiction with $\theta _{\varepsilon }<\widehat{\alpha }_{1}+%
\widehat{\alpha }_{2}+\delta _{0}$. Hence, the strong convergence $%
\left\Vert \left( \widehat{u}_{n},\widehat{v}_{n}\right) \right\Vert
_{H}\rightarrow 0$ as $n\rightarrow \infty $ holds, i.e.
\begin{equation}
\left\Vert \left( u_{n}-z_{n}\ast \widetilde{u}_{0},v_{n}-z_{n}\ast
\widetilde{v}_{0}\right) \right\Vert _{H}\rightarrow 0\text{ as }%
n\rightarrow \infty .  \label{3.44}
\end{equation}%
It follows from (\ref{1.11}) and (\ref{3.44}) that
\begin{equation*}
o\left( 1\right) =\Phi \left( u_{n}\right) -\Phi \left( z_{n}\ast \widetilde{%
u}_{0}\right) =\Phi \left( u_{n}\right) -z_{n}-\Phi \left( \widetilde{u}%
_{0}\right) ,
\end{equation*}%
i.e.
\begin{equation}
z_{n}=\Phi \left( u_{n}\right) -\Phi \left( \widetilde{u}_{0}\right)
+o\left( 1\right) .  \label{3.45}
\end{equation}%
Since $\Phi \left( u_{n}\right) \in C_{s}\left( z_{1,i}\right) $ for $n\in
\mathbb{N}$, then (\ref{3.45}) implies that $\{z_{n}\}$ is a bounded
sequence in $\mathbb{R}^{N}$. As a result, we may assume $z_{n}\rightarrow
z_{0}\in \mathbb{R}^{N}$ as $n\rightarrow \infty $. Thus, (\ref{3.44})
becomes
\begin{equation}
\left( u_{n},v_{n}\right) \rightarrow \left( z_{0}\ast \widetilde{u}%
_{0},z_{0}\ast \widetilde{v}_{0}\right) \text{ strongly in }H.  \label{3.46}
\end{equation}%
Since $\left( u_{n},v_{n}\right) \in N_{i,j}\left( \varepsilon \right) $ for
$n\in \mathbb{N}$, then by (\ref{3.46}), we have $\left( z_{0}\ast
\widetilde{u}_{0},z_{0}\ast \widetilde{v}_{0}\right) \in N_{i,j}\left(
\varepsilon \right) \cup O_{i,j}\left( \varepsilon \right) .$ In view of the
assumption $\left( i\right) $, (\ref{3.2}) and (\ref{3.46}), we obtain
\begin{equation*}
J_{\varepsilon }\left( z_{0}\ast \widetilde{u}_{0},z_{0}\ast \widetilde{v}%
_{0}\right) =\varepsilon ^{N}\theta _{\varepsilon }<\varepsilon ^{N}\left(
\widehat{\alpha }_{1}+\widehat{\alpha }_{2}+\delta _{0}\right) <\widetilde{%
\gamma }\left( \varepsilon \right) ,
\end{equation*}%
so $\left( z_{0}\ast \widetilde{u}_{0},z_{0}\ast \widetilde{v}_{0}\right)
\notin O_{i,j}\left( \varepsilon \right) ,$ i.e., $\left( z_{0}\ast
\widetilde{u}_{0},z_{0}\ast \widetilde{v}_{0}\right) \in N_{i,j}\left(
\varepsilon \right) $. Therefore, we may complete the proof of Proposition %
\ref{P3.2}.
\end{proof}

\begin{theorem}
\label{t3.1}For each $0<\varepsilon <\varepsilon _{0},1\leq i\leq k$ and $%
1\leq j\leq \ell ,$ problem $\left( P_{\varepsilon }\right) $ has a positive
solution $\left( \widehat{u}_{\varepsilon ,i},\widehat{v}_{\varepsilon
,j}\right) \in N_{i,j}\left( \varepsilon \right) .$
\end{theorem}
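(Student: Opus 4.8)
The idea is to realize $\gamma _{i,j}(\varepsilon )$ as a critical value of $J_{\varepsilon }$ via a compactness argument, and then to upgrade the resulting critical point to a positive solution. First I would fix $\delta _{0}$ and $\varepsilon _{0}$ as in Section~3, so that $0<\delta _{0}\le \tfrac{1}{2}\min \{\overline{\delta },\widetilde{\delta },\widehat{\alpha }_{1},\widehat{\alpha }_{2}\}$ and (\ref{3.2}) holds for all $\varepsilon \in (0,\varepsilon _{0})$; after possibly shrinking $\varepsilon _{0}$, Lemma~\ref{L2.4} gives $\varepsilon ^{-N}\gamma _{i,j}(\varepsilon )<\widehat{\alpha }_{1}+\widehat{\alpha }_{2}+\delta _{0}$ for every such $\varepsilon $. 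By Proposition~\ref{P3.1} there is a sequence $\{(u_{n},v_{n})\}\subset N_{i,j}(\varepsilon )$ with $J_{\varepsilon }(u_{n},v_{n})=\gamma _{i,j}(\varepsilon )+o(1)$ and $J_{\varepsilon }^{\prime }(u_{n},v_{n})\to 0$ in $H^{\ast }$.

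Since $\theta _{\varepsilon }:=\varepsilon ^{-N}\gamma _{i,j}(\varepsilon )<\widehat{\alpha }_{1}+\widehat{\alpha }_{2}+\delta _{0}$, this sequence satisfies hypotheses $(i)$ and $(ii)$ of Proposition~\ref{P3.2}; hence, along a subsequence, $(u_{n},v_{n})\to (u_{0},v_{0})$ strongly in $H$ with $(u_{0},v_{0})\in N_{i,j}(\varepsilon )$. Continuity of $J_{\varepsilon }$ and of $J_{\varepsilon }^{\prime }$ then yields $J_{\varepsilon }(u_{0},v_{0})=\gamma _{i,j}(\varepsilon )$ and $J_{\varepsilon }^{\prime }(u_{0},v_{0})=0$ in $H^{\ast }$, so $(u_{0},v_{0})$ is a critical point of $J_{\varepsilon }$; it is nontrivial because $N_{i,j}(\varepsilon )\subset \mathbf{N}_{\varepsilon }\subset \mathbf{T}$ forces $u_{0}\not\equiv 0$ and $v_{0}\not\equiv 0$.

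It then remains to show $(u_{0},v_{0})$ is a \emph{positive} solution of $(P_{\varepsilon })$. Testing $J_{\varepsilon }^{\prime }(u_{0},v_{0})=0$ against $(u_{0}^{-},0)$ and $(0,v_{0}^{-})$ with $w^{-}=\min \{w,0\}$: the nonlinearities only involve $u_{0}^{+},v_{0}^{+}$ and so vanish against $u_{0}^{-},v_{0}^{-}$, while $\int (-\triangle )^{\frac{\alpha }{2}}u_{0}(-\triangle )^{\frac{\alpha }{2}}u_{0}^{-}\mathrm{d}x\ge \int |(-\triangle )^{\frac{\alpha }{2}}u_{0}^{-}|^{2}\mathrm{d}x$ and $V_{1}>0$, so one is left with $\Vert u_{0}^{-}\Vert _{V_{1}}^{2}\le 0$ and $\Vert v_{0}^{-}\Vert _{V_{2}}^{2}\le 0$, whence $u_{0},v_{0}\ge 0$ and the Euler--Lagrange equations coincide with the system $(P_{\varepsilon })$. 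Since $u_{0}\ge 0$, $u_{0}\not\equiv 0$ solves $\varepsilon ^{2\alpha }(-\triangle )^{\alpha }u_{0}+c_{1}(x)u_{0}=0$ with $c_{1}(x)=V_{1}(x)-\mu _{1}u_{0}^{2p}-\beta (x)u_{0}^{q-1}v_{0}^{q+1}$, the strong maximum principle for the fractional Laplacian (cf.~\cite{Sil}) yields $u_{0}>0$ in $\mathbb{R}^{N}$, and similarly $v_{0}>0$. Relabelling $(u_{0},v_{0})=(\widehat{u}_{\varepsilon ,i},\widehat{v}_{\varepsilon ,j})$ completes the argument.

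The substance of the proof is already carried by Lemma~\ref{L2.4} and Propositions~\ref{P3.1}--\ref{P3.2}; the only delicate point here is the two-sided control of the level $\gamma _{i,j}(\varepsilon )$. The upper bound $\varepsilon ^{-N}\gamma _{i,j}(\varepsilon )<\widehat{\alpha }_{1}+\widehat{\alpha }_{2}+\delta _{0}$ is precisely what lets the concentration--compactness dichotomy exploited in Proposition~\ref{P3.2} be excluded (it rules out splitting into two bubbles in one component plus a bubble in the other), while the strict inequality $\gamma _{i,j}(\varepsilon )<\widetilde{\gamma }_{i,j}(\varepsilon )$ from (\ref{3.2}) is what prevents the strong limit from landing on the boundary $O_{i,j}(\varepsilon )$, so that it stays in $N_{i,j}(\varepsilon )$ and the critical point obtained is a free one rather than merely constrained.
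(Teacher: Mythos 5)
Your proposal is correct and takes essentially the same route as the paper, which simply invokes Propositions~\ref{P3.1} and~\ref{P3.2}. You have usefully unpacked the level control $\varepsilon^{-N}\gamma_{i,j}(\varepsilon)<\widehat{\alpha}_1+\widehat{\alpha}_2+\delta_0<\varepsilon^{-N}\widetilde{\gamma}_{i,j}(\varepsilon)$ that makes Proposition~\ref{P3.2} applicable, observed that the free gradient already vanishes along the sequence (so no Lagrange-multiplier argument such as Lemma~\ref{L4.1} is needed), and made explicit the truncation-plus-strong-maximum-principle step for strict positivity that the paper leaves implicit.
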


\begin{proof}
From Propositions \ref{P3.1} and \ref{P3.2}, we can prove Theorem \ref{t3.1}.
\end{proof}

\section{Proof of Theorem \protect\ref{t1.1}}

Throughout this section, we define
\begin{equation*}
\left[ \varepsilon ^{-N}\,J_{\varepsilon }\leq d\right] _{i,j}=\left\{
\left( u,v\right) \in N_{i,j}\left( \varepsilon \right) :\varepsilon
^{-N}J_{\varepsilon }\left( u,v\right) \leq d\right\} \,~\text{for }d\in
\mathbb{R}.
\end{equation*}%
The main goal of this section is to show
\begin{equation*}
\mathrm{cat}\left( \left[ \varepsilon ^{-N}\,J_{\varepsilon }\leq \widehat{%
\alpha }_{1}+\widehat{\alpha }_{2}+\delta _{\varepsilon }\right]
_{i,j}\right) \geq 2~\text{ if }z_{1,i}=z_{2,j},
\end{equation*}%
where $0<\delta _{\varepsilon }\rightarrow 0$ as $\varepsilon \rightarrow
0^{+}$. Here $cat(\cdot )$ is the standard Lusternik--Schnirelman category
(cf.~\cite{B1}).

From Lemma 4.1 in \cite{Lin1} and Lemma 2.2 in \cite{LW}, we obtain

\begin{lemma}
\label{L4.1}Let $\left( u_{\varepsilon },v_{\varepsilon }\right) $ be a
constrained critical point of $J_{\varepsilon }$ on $\mathbf{N}_{\varepsilon
}$ with
\begin{equation*}
\varepsilon ^{-N}\,J_{\varepsilon }\left( u_{\varepsilon },v_{\varepsilon
}\right) \leq \widehat{\alpha }_{1}+\widehat{\alpha }_{2}+\delta _{0}.
\end{equation*}%
Then $\nabla J_{\varepsilon }\left( u_{\varepsilon },v_{\varepsilon }\right)
=0$ on $H^{\ast }.$
\end{lemma}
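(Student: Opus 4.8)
The plan is to carry out the standard ``natural constraint'' argument for the two--component Nehari manifold: one represents $J_\varepsilon'(u_\varepsilon,v_\varepsilon)$ through Lagrange multipliers associated with the two defining constraints, and then shows that these multipliers vanish. The only genuinely nontrivial input is the energy estimate $(\ref{2.10})$ of Lemma~\ref{L2.2}, which guarantees that the $2\times 2$ matrix governing the multipliers is nonsingular exactly on the energy range singled out by the hypothesis $\varepsilon^{-N}J_\varepsilon(u_\varepsilon,v_\varepsilon)\le\widehat\alpha_1+\widehat\alpha_2+\delta_0$.

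First I would write $\mathbf{N}_\varepsilon=\{(u,v)\in\mathbf{T}:G_1(u,v)=G_2(u,v)=0\}$, where
\begin{equation*}
G_1(u,v)=\|u\|_{V_1}^2-\mu_1\!\int_{\mathbb{R}^N}|u^+|^{2p+2}\,\mathrm{d}x-L_\beta(u,v),\qquad
G_2(u,v)=\|v\|_{V_2}^2-\mu_2\!\int_{\mathbb{R}^N}|v^+|^{2p+2}\,\mathrm{d}x-L_\beta(u,v),
\end{equation*}
with $L_\beta$, $K_1$, $K_2$, $\Gamma$ as in Lemma~\ref{L3.1}; note that $G_1,G_2\in C^1(H,\mathbb{R})$ and that $\langle J_\varepsilon'(u,v),(u,0)\rangle=G_1(u,v)$, $\langle J_\varepsilon'(u,v),(0,v)\rangle=G_2(u,v)$ for all $(u,v)\in H$. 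Since $(u_\varepsilon,v_\varepsilon)\in\mathbf{N}_\varepsilon$, differentiating $G_1,G_2$ and using $G_1(u_\varepsilon,v_\varepsilon)=G_2(u_\varepsilon,v_\varepsilon)=0$ to remove the quadratic terms gives
\begin{equation*}
\langle G_1'(u_\varepsilon,v_\varepsilon),(u_\varepsilon,0)\rangle=-K_1(u_\varepsilon,v_\varepsilon),\qquad
\langle G_2'(u_\varepsilon,v_\varepsilon),(0,v_\varepsilon)\rangle=-K_2(u_\varepsilon,v_\varepsilon),
\end{equation*}
and $\langle G_1'(u_\varepsilon,v_\varepsilon),(0,v_\varepsilon)\rangle=\langle G_2'(u_\varepsilon,v_\varepsilon),(u_\varepsilon,0)\rangle=-(q+1)L_\beta(u_\varepsilon,v_\varepsilon)$. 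Hence the matrix of these four pairings is
\begin{equation*}
\mathcal{M}=\begin{pmatrix}-K_1(u_\varepsilon,v_\varepsilon)&-(q+1)L_\beta(u_\varepsilon,v_\varepsilon)\\[2pt]-(q+1)L_\beta(u_\varepsilon,v_\varepsilon)&-K_2(u_\varepsilon,v_\varepsilon)\end{pmatrix},\qquad \det\mathcal{M}=\Gamma(u_\varepsilon,v_\varepsilon).
\end{equation*}

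The second step is to show $\Gamma(u_\varepsilon,v_\varepsilon)>0$. Expanding $\Gamma$ and using $\beta\le 0$ (so $L_\beta(u_\varepsilon,v_\varepsilon)\le 0$) together with $0<q\le p\le 1$, the mixed term $2p(q-1)L_\beta(u_\varepsilon,v_\varepsilon)\big[\mu_1\int|u_\varepsilon^+|^{2p+2}+\mu_2\int|v_\varepsilon^+|^{2p+2}\big]$ is nonnegative and $-4qL_\beta(u_\varepsilon,v_\varepsilon)^2\ge-4L_\beta(u_\varepsilon,v_\varepsilon)^2$, so
\begin{equation*}
\Gamma(u_\varepsilon,v_\varepsilon)\ge 4\Big(p^2\mu_1\mu_2\!\int_{\mathbb{R}^N}|u_\varepsilon^+|^{2p+2}\,\mathrm{d}x\!\int_{\mathbb{R}^N}|v_\varepsilon^+|^{2p+2}\,\mathrm{d}x-L_\beta(u_\varepsilon,v_\varepsilon)^2\Big).
\end{equation*}
Since $0<\delta_0\le\overline\delta/2$, the hypothesis $\varepsilon^{-N}J_\varepsilon(u_\varepsilon,v_\varepsilon)\le\widehat\alpha_1+\widehat\alpha_2+\delta_0$ places $(u_\varepsilon,v_\varepsilon)$ in the range where $(\ref{2.10})$ of Lemma~\ref{L2.2} applies (with the fixed $\sigma$ chosen in Section~3), and that estimate makes the right-hand side strictly positive; thus $\Gamma(u_\varepsilon,v_\varepsilon)>0$. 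Consequently $G_1'(u_\varepsilon,v_\varepsilon)$ and $G_2'(u_\varepsilon,v_\varepsilon)$ are linearly independent in $H^\ast$, $\mathbf{N}_\varepsilon$ is a $C^1$ submanifold of codimension two near $(u_\varepsilon,v_\varepsilon)$, and the Lagrange multiplier rule yields $\lambda_1,\lambda_2\in\mathbb{R}$ with $J_\varepsilon'(u_\varepsilon,v_\varepsilon)=\lambda_1G_1'(u_\varepsilon,v_\varepsilon)+\lambda_2G_2'(u_\varepsilon,v_\varepsilon)$.

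Finally, pairing this identity with $(u_\varepsilon,0)$ and with $(0,v_\varepsilon)$ and using $\langle J_\varepsilon'(u_\varepsilon,v_\varepsilon),(u_\varepsilon,0)\rangle=G_1(u_\varepsilon,v_\varepsilon)=0$ and $\langle J_\varepsilon'(u_\varepsilon,v_\varepsilon),(0,v_\varepsilon)\rangle=G_2(u_\varepsilon,v_\varepsilon)=0$ turns it into the homogeneous system $\mathcal{M}(\lambda_1,\lambda_2)^{\top}=(0,0)^{\top}$; since $\det\mathcal{M}=\Gamma(u_\varepsilon,v_\varepsilon)>0$, this forces $\lambda_1=\lambda_2=0$, hence $J_\varepsilon'(u_\varepsilon,v_\varepsilon)=0$ in $H^\ast$, which is precisely $\nabla J_\varepsilon(u_\varepsilon,v_\varepsilon)=0$. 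The only substantial step is the strict positivity of $\Gamma(u_\varepsilon,v_\varepsilon)$; this is an a priori estimate rather than algebra, and it is exactly why the lemma is stated with the sharp threshold $\widehat\alpha_1+\widehat\alpha_2+\delta_0$ in place of an arbitrary critical level. Everything else duplicates the computations already made in the proof of Lemma~\ref{L3.1}.
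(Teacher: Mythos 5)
Your proof is correct, and it is precisely the natural--constraint Lagrange--multiplier argument that the paper delegates by reference (to Lemma 4.1 of Lin--Wu and Lemma 2.2 of Lin--Wei): you verify $\langle G_1'(u_\varepsilon,v_\varepsilon),(u_\varepsilon,0)\rangle=-K_1$, $\langle G_2'(u_\varepsilon,v_\varepsilon),(0,v_\varepsilon)\rangle=-K_2$, $\langle G_1'(u_\varepsilon,v_\varepsilon),(0,v_\varepsilon)\rangle=\langle G_2'(u_\varepsilon,v_\varepsilon),(u_\varepsilon,0)\rangle=-(q+1)L_\beta$, expand $\det\mathcal{M}=K_1K_2-(q+1)^2L_\beta^2=\Gamma$, and then bound $\Gamma\ge 4\bigl(p^2\mu_1\mu_2\int|u_\varepsilon^+|^{2p+2}\int|v_\varepsilon^+|^{2p+2}-L_\beta^2\bigr)>0$ via $(\ref{2.10})$, all of which is exact. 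The paper states the lemma without a written proof, so your argument is a correct reconstruction rather than a deviation from the authors' method.
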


Therefore, from Theorem~2.3 in \cite{Am}, Proposition \ref{P3.2} and Lemma %
\ref{L4.1}, we have

\begin{proposition}
\label{P4.1} Suppose $cat\left( M_{i,j}(\varepsilon ,\delta _{0})\right)
\geq k,$ where $k\in \mathbb{N}$ and
\begin{equation*}
M_{i,j}(\varepsilon ,\delta _{0})=\left[ \varepsilon ^{-N}J_{\varepsilon
}\leq \widehat{\alpha }_{1}+\widehat{\alpha }_{2}+\delta _{0}\right] _{i,j}.
\end{equation*}%
Then the functional $J_{\varepsilon }$ has at least $k$ critical points in $%
M_{i,j}(\varepsilon ,\delta _{0})$.
\end{proposition}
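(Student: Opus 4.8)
The plan is to apply the abstract Lusternik--Schnirelmann theorem for $C^{1}$ functionals on $C^{1}$ manifolds (Theorem~2.3 of \cite{Am}) to the restriction of $J_{\varepsilon }$ to $N_{i,j}(\varepsilon )$, and then to promote the constrained critical points obtained this way to free critical points of $J_{\varepsilon }$ by Lemma~\ref{L4.1}. First I would record the structural facts that make this legitimate. Since $C_{s}(z_{1,i})$ and $C_{s}(z_{2,j})$ are open cubes and $\Phi $ is continuous, $N_{i,j}(\varepsilon )$ is a relatively open subset of the $C^{1}$ Hilbert manifold $\mathbf{N}_{\varepsilon }$, hence is itself a $C^{1}$ manifold on which $J_{\varepsilon }$ is of class $C^{1}$; by Lemma~\ref{L2.1} and $q\leq p$ it is bounded below there with $\inf J_{\varepsilon }=\gamma _{i,j}(\varepsilon )$, and by Lemma~\ref{L2.4} together with (\ref{3.2}) the sublevel set $M_{i,j}(\varepsilon ,\delta _{0})$ is nonempty for $\varepsilon \in (0,\varepsilon _{0})$.

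The key observation is that $M_{i,j}(\varepsilon ,\delta _{0})$ stays uniformly away from the ``boundary'' set $O_{i,j}(\varepsilon )$: by (\ref{3.2}) every $(u,v)\in O_{i,j}(\varepsilon )$ satisfies $\varepsilon ^{-N}J_{\varepsilon }(u,v)\geq \varepsilon ^{-N}\widetilde{\gamma }_{i,j}(\varepsilon )>\widehat{\alpha }_{1}+\widehat{\alpha }_{2}+\delta _{0}$, so $M_{i,j}(\varepsilon ,\delta _{0})$ is a closed subset of the open manifold $N_{i,j}(\varepsilon )$ and a negative pseudo-gradient deformation of $J_{\varepsilon }$, which only lowers the energy, can never reach $O_{i,j}(\varepsilon )$ when started from $M_{i,j}(\varepsilon ,\delta _{0})$. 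Thus for deformation arguments $N_{i,j}(\varepsilon )$ behaves like a complete manifold below the level $\varepsilon ^{N}(\widehat{\alpha }_{1}+\widehat{\alpha }_{2}+\delta _{0})$. Next I would check the Palais--Smale condition in this range: using the Lagrange-multiplier computation underlying Lemma~\ref{L4.1} together with the nondegeneracy estimate (\ref{2.10}) of Lemma~\ref{L2.2} (which keeps the Jacobian $\Gamma (u_{n},v_{n})$ bounded away from $0$ along any sequence of energy $\leq \varepsilon ^{N}(\widehat{\alpha }_{1}+\widehat{\alpha }_{2}+\delta _{0})$), a constrained Palais--Smale sequence for $J_{\varepsilon }|_{N_{i,j}(\varepsilon )}$ at such a level is also a free Palais--Smale sequence, i.e. $J_{\varepsilon }^{\prime }(u_{n},v_{n})\rightarrow 0$ in $H^{\ast }$; Proposition~\ref{P3.2} then provides a subsequence converging strongly to some $(u_{0},v_{0})\in N_{i,j}(\varepsilon )$, necessarily lying in $M_{i,j}(\varepsilon ,\delta _{0})$.

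Finally I would invoke Theorem~2.3 of \cite{Am}: a $C^{1}$ functional that is bounded below and satisfies $(PS)_{c}$ for all $c\leq \varepsilon ^{N}(\widehat{\alpha }_{1}+\widehat{\alpha }_{2}+\delta _{0})$ on such a manifold has at least $\mathrm{cat}([J_{\varepsilon }\leq c])$ critical points in that sublevel set; taking $c=\varepsilon ^{N}(\widehat{\alpha }_{1}+\widehat{\alpha }_{2}+\delta _{0})$ and using the hypothesis $\mathrm{cat}(M_{i,j}(\varepsilon ,\delta _{0}))\geq k$ yields $k$ constrained critical points of $J_{\varepsilon }$ in $M_{i,j}(\varepsilon ,\delta _{0})$, and since each has energy $\leq \varepsilon ^{N}(\widehat{\alpha }_{1}+\widehat{\alpha }_{2}+\delta _{0})$, Lemma~\ref{L4.1} upgrades each to a critical point of $J_{\varepsilon }$ on $H$. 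The one delicate step is the natural-constraint claim --- that constrained Palais--Smale sequences at low energy are free Palais--Smale sequences --- which is exactly where the uniform positivity of $\Gamma (u,v)$ from (\ref{2.10}) is used; the rest is a direct application of the abstract theorem and the lemmas already established.
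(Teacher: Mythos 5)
Your proposal is correct and follows essentially the same route as the paper. The paper states Proposition~\ref{P4.1} directly as a consequence of Theorem~2.3 of \cite{Am}, Proposition~\ref{P3.2}, and Lemma~\ref{L4.1} without further elaboration; your argument spells out the details this citation tacitly invokes (the separation of $M_{i,j}(\varepsilon ,\delta _{0})$ from $O_{i,j}(\varepsilon )$ via (\ref{3.2}), the constrained-to-free upgrade of Palais--Smale sequences using the uniform lower bound on $\Gamma $ from (\ref{2.10}), and the application of the abstract Lusternik--Schnirelmann theorem).
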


Now we define a function by
\begin{equation}
h(u,v)=\Phi \left( u\right) -\Phi \left( v\right) \text{ for }(u,v)\in H.
\label{4.1}
\end{equation}%
This leads to the following results.

\begin{lemma}
\label{L4.2}Suppose that $z_{1,i}=z_{2,j}.$ Then there exist $\widetilde{%
\varepsilon }_{0}>0$ and $0<\delta _{1}\leq \delta _{0}$ such that for every
$0<\varepsilon <\widetilde{\varepsilon }_{0},$ there holds
\begin{equation*}
\left\vert h\left( u,v\right) \right\vert >0\text{ for all }\left(
u,v\right) \in \left[ \varepsilon ^{-N}J_{\varepsilon }\leq \widehat{\alpha }%
_{1}+\widehat{\alpha }_{2}+\delta _{1}\right] _{i,j}.
\end{equation*}
\end{lemma}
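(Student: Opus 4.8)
Here the strategy is to argue by contradiction and combine the energy dichotomy of Lemma~\ref{L2.2} with a concentration--compactness analysis in the rescaled variables: if the barycenters of $u$ and $v$ coincided while $\varepsilon^{-N}J_\varepsilon$ stayed close to $\widehat{\alpha}_1+\widehat{\alpha}_2$, then both components would concentrate near the \emph{same} point $P\in\overline{C_s(z_{1,i})}$, which (by $(D_4)$, i.e. \eqref{2.34}) would force the coupling integral $\int_{\mathbb{R}^N}\beta(x)|u^+|^{q+1}|v^+|^{q+1}\mathrm{d}x$ to stay bounded away from $0$ --- and Lemma~\ref{L2.2} shows that this is incompatible with the energy being near $\widehat{\alpha}_1+\widehat{\alpha}_2$. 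Concretely: if the statement fails, then choosing $\widetilde{\varepsilon}_0=\delta_1=1/n$ produces $\varepsilon_n\in(0,1/n)$ and $(u_n,v_n)\in N_{i,j}(\varepsilon_n)$ with $\Phi(u_n)=\Phi(v_n)$ (i.e. $h(u_n,v_n)=0$) and $\varepsilon_n^{-N}J_{\varepsilon_n}(u_n,v_n)\le\widehat{\alpha}_1+\widehat{\alpha}_2+1/n$. By the argument in the proof of Lemma~\ref{L2.3} (using Lemma~\ref{L2.1} and $q\le p$), $\varepsilon_n^{-N}J_{\varepsilon_n}(u_n,v_n)\ge\widehat{\alpha}_1+\widehat{\alpha}_2$, hence $\varepsilon_n^{-N}J_{\varepsilon_n}(u_n,v_n)\to\widehat{\alpha}_1+\widehat{\alpha}_2$. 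Then Lemma~\ref{L2.2} rules out $\int_{\mathbb{R}^N}\beta(x)|u_n^+|^{q+1}|v_n^+|^{q+1}\mathrm{d}x\le-\sigma\varepsilon_n^N$ along a subsequence for any fixed $\sigma>0$ (it would give $\varepsilon_n^{-N}J_{\varepsilon_n}(u_n,v_n)>\widehat{\alpha}_1+\widehat{\alpha}_2+\overline{\delta}$), so $\varepsilon_n^{-N}\int_{\mathbb{R}^N}\beta(x)|u_n^+|^{q+1}|v_n^+|^{q+1}\mathrm{d}x\to 0$.

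Next I would rescale, $\widetilde{u}_n(x)=u_n(\varepsilon_n x)$, $\widetilde{v}_n(x)=v_n(\varepsilon_n x)$, so that $\widetilde{J}_{\varepsilon_n}(\widetilde{u}_n,\widetilde{v}_n)=\varepsilon_n^{-N}J_{\varepsilon_n}(u_n,v_n)\to\widehat{\alpha}_1+\widehat{\alpha}_2$ and $\int_{\mathbb{R}^N}\beta(\varepsilon_n x)|\widetilde{u}_n^+|^{q+1}|\widetilde{v}_n^+|^{q+1}\mathrm{d}x\to 0$. Feeding this vanishing coupling term into the Nehari--scaling estimates \eqref{2.14}--\eqref{2.19} (with $\widetilde s_n,\widetilde t_n$ as in \eqref{2.17}--\eqref{2.18}), every intermediate inequality turns into an equality in the limit; since $\int_{\mathbb{R}^N}|\widetilde{u}_n^+|^{2p+2}\mathrm{d}x$ and $\int_{\mathbb{R}^N}|\widetilde{v}_n^+|^{2p+2}\mathrm{d}x$ are bounded and bounded away from $0$ (Lemma~\ref{L2.1}), this yields $\widetilde s_n,\widetilde t_n\to1$, $\int_{\mathbb{R}^N}(V_1(\varepsilon_n x)-\lambda_1)\widetilde{u}_n^2\mathrm{d}x\to0$, $\int_{\mathbb{R}^N}(V_2(\varepsilon_n x)-\lambda_2)\widetilde{v}_n^2\mathrm{d}x\to0$, and that $\{\widetilde s_n\widetilde{u}_n\}$, $\{\widetilde t_n\widetilde{v}_n\}$ are minimizing sequences for $\widehat{\alpha}_1$ on $\widehat{\mathbf{M}}_1$ and $\widehat{\alpha}_2$ on $\widehat{\mathbf{M}}_2$. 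By the concentration--compactness principle --- the same machinery invoked in the proofs of Lemma~\ref{L2.5} and Proposition~\ref{P3.2} --- there are $\tau_n,\rho_n\in\mathbb{R}^N$ such that $\widetilde{u}_n(\cdot+\tau_n)\to\widehat{\omega}_1$ and $\widetilde{v}_n(\cdot+\rho_n)\to\widehat{\omega}_2$ strongly in $H^\alpha(\mathbb{R}^N)$ along a subsequence.

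Now I use the hypothesis $\Phi(u_n)=\Phi(v_n)$. By \eqref{1.12}, $\Phi(\widetilde{u}_n)=\varepsilon_n^{-1}\Phi(u_n)=\varepsilon_n^{-1}\Phi(v_n)=\Phi(\widetilde{v}_n)=:y_n$; set $\widehat{u}_n(x)=\widetilde{u}_n(x+y_n)$ and $\widehat{v}_n(x)=\widetilde{v}_n(x+y_n)$, so $\Phi(\widehat{u}_n)=\Phi(\widehat{v}_n)=0$ by \eqref{1.11}. Writing $\widehat{u}_n$ as a translate of $\widetilde{u}_n(\cdot+\tau_n)$ and using continuity of $\Phi$ on $L^2(\mathbb{R}^N)\setminus\{0\}$ together with \eqref{1.11}, the shifts $y_n-\tau_n$ and $y_n-\rho_n$ remain bounded; hence, up to a further subsequence, $\widehat{u}_n\to\widehat{\omega}_1(\cdot-a)$ and $\widehat{v}_n\to\widehat{\omega}_2(\cdot-b)$ strongly in $H^\alpha(\mathbb{R}^N)$ for some $a,b\in\mathbb{R}^N$, hence strongly in $L^{2q+2}(\mathbb{R}^N)$ (in the borderline case $2q+2=2p+2=2_\alpha^\ast$ one uses instead that the $L^{2p+2}$ norms converge, by the equalities obtained above). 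Since $\varepsilon_n(x+y_n)=\varepsilon_n x+\Phi(u_n)$ and, along a subsequence, $\Phi(u_n)\to P\in\overline{C_s(z_{1,i})}\subset B_{r_0}(z_{1,i})$, the uniform continuity of $\beta$ and \eqref{2.34} give $\beta(\varepsilon_n(x+y_n))\to\beta(P)\le-c_0<0$ uniformly on bounded sets; combining with the strong $L^{2q+2}$ convergence, Hölder's inequality and the uniform smallness of the tails,
\[
0=\lim_{n\to\infty}\int_{\mathbb{R}^N}\beta\!\left(\varepsilon_n(x+y_n)\right)\left\vert\widehat{u}_n^{+}\right\vert^{q+1}\left\vert\widehat{v}_n^{+}\right\vert^{q+1}\mathrm{d}x=\beta(P)\int_{\mathbb{R}^N}\widehat{\omega}_1(x-a)^{q+1}\,\widehat{\omega}_2(x-b)^{q+1}\,\mathrm{d}x<0,
\]
because $\widehat{\omega}_1,\widehat{\omega}_2>0$ on $\mathbb{R}^N$. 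This contradiction proves the lemma.

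I expect the main obstacle to be the concentration--compactness step: establishing that the rescaled components converge \emph{strongly} to translates of $\widehat{\omega}_1,\widehat{\omega}_2$, which is where the varying potentials $V_l(\varepsilon_n x)$ have to be controlled and where one relies on the results quoted in the proofs of Lemma~\ref{L2.5} and Proposition~\ref{P3.2}; the barycenter bookkeeping (that $y_n-\tau_n$ stays bounded, so the limiting bump sits at a bounded point) and the borderline Sobolev exponent $2q+2=2_\alpha^\ast$ are secondary points that nonetheless need to be addressed explicitly.
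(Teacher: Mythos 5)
Your argument is correct and follows essentially the same route as the paper's proof: argue by contradiction, show that the rescaled energy tends to $\widehat{\alpha}_1+\widehat{\alpha}_2$ (so that, via Lemma~\ref{L2.2}, the scaled coupling integral vanishes), apply concentration--compactness to extract strong $H^{\alpha}$ limits of translates equal to (translates of) the ground states of $(\widehat{E}_1)$ and $(\widehat{E}_2)$, use the barycenter identities \eqref{1.11}--\eqref{1.12} to locate the common concentration point in $\overline{C_s(z_{1,i})}$, and derive a contradiction from $(D_4)$, i.e.\ \eqref{2.34}. The only differences are cosmetic and slightly simplifying: you translate both components by the same amount (the common barycenter $y_n$), which lets you avoid the paper's explicit tracking of the relative shift $x_n-y_n\to z_0$, and you invoke $\beta\le -c_0$ on all of $\overline{C_s(z_{1,i})}$ directly rather than first pinning the limit point to $z_{1,i}$ via the strict-minimum condition $(D_2)$.
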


\begin{proof}
We may prove this by contradiction. Suppose that there exists $\varepsilon
_{n}>0$, for $n\in \mathbb{N},\varepsilon _{n}\rightarrow 0$ as $%
n\rightarrow \infty $ and $\left( u_{n},v_{n}\right) \in N_{i,j}\left(
\varepsilon _{n}\right) $ such that
\begin{equation}
h(u_{n},v_{n})=0,\text{ for all }n\in \mathbb{N},  \label{4.2}
\end{equation}%
and
\begin{equation}
\varepsilon _{n}^{-N}J_{\varepsilon _{n}}\left( u_{n},v_{n}\right) =\widehat{%
\alpha }_{1}+\widehat{\alpha }_{2}+o\left( 1\right) .  \label{4.3}
\end{equation}%
To obtain a contradiction with (\ref{4.2}), it is sufficient to show that
\begin{equation}
\frac{1}{\varepsilon _{n}}\left\vert h(u_{n},v_{n})\right\vert \rightarrow
\infty \text{ as }n\rightarrow \infty .  \label{4.4}
\end{equation}%
Then $\Phi \left( u_{n}\right) =\Phi \left( v_{n}\right) $ and
\begin{equation*}
\Phi \left( u_{n}\right) ,\Phi \left( v_{n}\right) \in C_{s}\left(
z_{1,i}\right) =C_{s}\left( z_{2,j}\right) .
\end{equation*}%
Let $\widetilde{u}_{n}\left( x\right) =u_{n}\left( \varepsilon _{n}x\right) $
and $\widetilde{v}_{n}\left( x\right) =v_{n}\left( \varepsilon _{n}x\right)
. $ Clearly, $\Phi \left( \widetilde{u}_{n}\right) =\Phi \left( \widetilde{v}%
_{n}\right) $ and $\Phi \left( \widetilde{u}_{n}\right) ,\Phi \left(
\widetilde{v}_{n}\right) \in C_{s/\varepsilon _{n}}\left( z_{1,i}\right) .$
By a similar argument to the proof of Lemma \ref{L2.5} and the proof of
Lemma 3.3 in \cite{LWW}, we can obtain%
\begin{equation*}
\int_{\mathbb{R}^{N}}(V_{1}\left( \varepsilon _{n}x\right) -\lambda _{1})%
\widetilde{u}_{n}^{2}\mathrm{d}x=o\left( 1\right) \text{ and }\int_{\mathbb{R%
}^{N}}(V_{2}\left( \varepsilon _{n}x\right) -\lambda _{2})\widetilde{v}%
_{n}^{2}\mathrm{d}x=o\left( 1\right) .
\end{equation*}%
Moreover, we can conclude that $\widehat{I}_{1}\left( \widetilde{u}%
_{n}\right) =\widehat{\alpha }_{1}+o\left( 1\right) ,\widehat{I}_{2}\left(
\widetilde{v}_{n}\right) =\widehat{\alpha }_{2}+o\left( 1\right) $ and%
\begin{eqnarray*}
\int_{\mathbb{R}^{N}}(\left\vert (-\triangle )^{\frac{\alpha }{2}}\widetilde{%
u}_{n}\right\vert ^{2}+\lambda _{1}\widetilde{u}_{n}^{2})\mathrm{d}x &=&\mu
_{1}\int_{\mathbb{R}^{N}}\left\vert \widetilde{u}_{n}^{+}\right\vert ^{2p+2}%
\mathrm{d}x+o\left( 1\right) , \\
\int_{\mathbb{R}^{N}}(\left\vert (-\triangle )^{\frac{\alpha }{2}}\widetilde{%
v}_{n}\right\vert ^{2}+\lambda _{2}\widetilde{v}_{n}^{2})\mathrm{d}x &=&\mu
_{2}\int_{\mathbb{R}^{N}}\left\vert \widetilde{v}_{n}^{+}\right\vert ^{2p+2}%
\mathrm{d}x+o\left( 1\right) .
\end{eqnarray*}%
Thus, there exist $t_{n},s_{n}>0$ with $t_{n},s_{n}\rightarrow 1$ as $%
n\rightarrow \infty $ such that $\overline{u}_{n}=t_{n}\widetilde{u}_{n}\in
\widehat{\mathbf{M}}_{1}$ and $\overline{v}_{n}=s_{n}\widetilde{v}_{n}\in
\widehat{\mathbf{M}}_{2},$ which implies that $\widehat{I}_{1}\left(
\overline{u}_{n}\right) =\widehat{\alpha }_{1}+o\left( 1\right) $ and $%
\widehat{I}_{2}\left( \overline{v}_{n}\right) =\widehat{\alpha }_{2}+o\left(
1\right) $. Furthermore, by the Ekeland varitional principle \cite{E}, we
may assume that $\left\{ \overline{u}_{n}\right\} $ is a (PS)$_{\widehat{%
\alpha }_{1}}$--sequence of $\widehat{I}_{1}$ in $H^{\alpha }\left( \mathbb{R%
}^{N}\right) $ and that $\left\{ \overline{v}_{n}\right\} $ is a (PS)$_{%
\widehat{\alpha }_{2}}$--sequence of $\widehat{I}_{2}$ in $H^{\alpha }\left(
\mathbb{R}^{N}\right) $, respectively. Applying the
concentration-compactness principle of Lions \cite{Li1,Li2}, there are
positive constants $R,b_{0}$ and two sequences $\left\{ x_{n}\right\}
,\left\{ y_{n}\right\} \subset \mathbb{R}^{N}$ such that
\begin{equation}
\int_{B^{N}\left( x_{n};R\right) }\left\vert \overline{u}_{n}^{+}\right\vert
^{2p+2}\mathrm{d}x\geq b_{0}~\text{ and }\int_{B^{N}\left( y_{n};R\right)
}\left\vert \overline{v}_{n}^{+}\right\vert ^{2p+2}\mathrm{d}x\geq b_{0}~%
\text{ for all }n\in \mathbb{N}.  \label{4.5}
\end{equation}%
Let $\widehat{u}_{n}(x)=\overline{u}_{n}\left( x+x_{n}\right) $ for $x\in
\mathbb{R}^{N}$ and $n\in \mathbb{N}$. Then, due to translation invariance,
it is obvious that $\left\{ \widehat{u}_{n}\right\} $ is also a (PS) $_{%
\widehat{\alpha }_{1}}$--sequence of $\widehat{I}_{1}$ in $H^{\alpha }\left(
\mathbb{R}^{N}\right) .$ Hence by $\left( \ref{4.5}\right) $, we may assume
that there exists a subsequence of $\left\{ \widehat{u}_{n}\right\} $ such
that
\begin{eqnarray}
\widehat{u}_{n} &\rightharpoonup &u_{0}~~\text{ in }H^{\alpha }\left(
\mathbb{R}^{N}\right) ,  \notag \\
\widehat{u}_{n} &\rightarrow &u_{0}~~\text{ a.e. in }\mathbb{R}^{N},
\label{4.6} \\
\int_{B^{N}\left( 0;R\right) }\left\vert \widehat{u}_{n}^{+}\right\vert
^{2p+2}\mathrm{d}x &\rightarrow &\int_{B^{N}\left( 0;R\right) }\left\vert
u_{0}^{+}\right\vert ^{2p+2}\mathrm{d}x\geq b_{0}.  \notag
\end{eqnarray}%
Now we set $w_{n}=\widehat{u}_{n}-u_{0}.$ Then it follows from $(\ref{4.6})$
and Br\'{e}zis--Lieb Lemma \cite{BLi} that
\begin{equation}
\int_{\mathbb{R}^{N}}\left\vert \widehat{u}_{n}^{+}\right\vert ^{2p+2}%
\mathrm{d}x=\int_{\mathbb{R}^{N}}\left\vert u_{0}^{+}\right\vert ^{2p+2}%
\mathrm{d}x+\int_{\mathbb{R}^{N}}\left\vert w_{n}^{+}\right\vert ^{2p+2}%
\mathrm{d}x+o\left( 1\right)  \label{4.7}
\end{equation}%
and
\begin{eqnarray}
&&\int_{\mathbb{R}^{N}}(\left\vert (-\triangle )^{\frac{\alpha }{2}}\widehat{%
u}_{n}\right\vert ^{2}+\lambda _{1}\widehat{u}_{n}^{2})\mathrm{d}x=\int_{%
\mathbb{R}^{N}}(\left\vert (-\triangle )^{\frac{\alpha }{2}}u_{0}\right\vert
^{2}+\lambda _{1}u_{0}^{2})\mathrm{d}x  \label{4.8} \\
&&+\int_{\mathbb{R}^{N}}(\left\vert (-\triangle )^{\frac{\alpha }{2}%
}w_{n}\right\vert ^{2}+\lambda _{1}w_{n}^{2})\mathrm{d}x+o\left( 1\right) .
\notag
\end{eqnarray}%
Combining $\left( \ref{4.7}\right) ,\left( \ref{4.8}\right) $ and $\left\{
\widehat{u}_{n}\right\} $ is a (PS)$_{\widehat{\alpha }_{1}}$--sequence of $%
\widehat{I}_{1}$ in $H^{\alpha }\left( \mathbb{R}^{N}\right) ,$ we obtain
\begin{equation*}
\int_{\mathbb{R}^{N}}(\left\vert (-\triangle )^{\frac{\alpha }{2}%
}w_{n}\right\vert ^{2}+\lambda _{1}w_{n}^{2})\mathrm{d}x=\mu _{1}\int_{%
\mathbb{R}^{N}}\left\vert w_{n}^{+}\right\vert ^{2p+2}\mathrm{d}x+o\left(
1\right)
\end{equation*}%
and
\begin{equation*}
\frac{p}{2p+2}\int_{\mathbb{R}^{N}}(\left\vert (-\triangle )^{\frac{\alpha }{%
2}}w_{n}\right\vert ^{2}+\lambda _{1}w_{n}^{2})\mathrm{d}x=\widehat{I}%
_{1}\left( w_{n}\right) +o(1)=\widehat{I}_{1}\left( \widehat{u}_{n}\right) -%
\widehat{I}_{1}\left( u_{0}\right) +o\left( 1\right) =o\left( 1\right) .
\end{equation*}%
Consequently, $\widehat{u}_{n}\rightarrow u_{0}$ strongly in $H^{\alpha
}\left( \mathbb{R}^{N}\right) $ and $\widehat{I}_{1}\left( u_{0}\right) =%
\widehat{\alpha }_{1}.$ Similarly, $\widehat{v}_{n}\rightarrow v_{0}$
strongly in $H^{\alpha }\left( \mathbb{R}^{N}\right) $ and $\widehat{I}%
_{2}\left( v_{0}\right) =\widehat{\alpha }_{2},$ where $\widehat{v}_{n}(x)=%
\overline{v}_{n}\left( x+y_{n}\right) $ for $x\in \mathbb{R}^{N}$ and $n\in
\mathbb{N}$. Therefore, we obtain
\begin{equation}
\widehat{u}_{n}(x)\rightarrow u_{0}\left( x\right) ;\ \widehat{v}%
_{n}(x)\rightarrow v_{0}\left( x\right) ~\text{strongly in }H^{\alpha
}\left( \mathbb{R}^{N}\right) ,  \label{4.9}
\end{equation}%
as $n\rightarrow \infty $, where $u_{0}$ and $v_{0}$ are positive solutions
of Eq. $\left( \widehat{E}_{1}\right) $ and Eq. $\left( \widehat{E}%
_{2}\right) ,$ respectively (cf. \cite{BeL} and \cite{FLS}). From $\Phi
\left( \widetilde{u}_{n}\right) ,\Phi \left( \widetilde{v}_{n}\right) \in
C_{s/\varepsilon _{n}}\left( 0\right) $ $\widehat{u}_{n}(x)=\overline{u}%
_{n}\left( x+x_{n}\right) ,\widehat{v}_{n}(x)=\overline{v}_{n}\left(
x+y_{n}\right) $ and $\left( \ref{4.9}\right) ,$ we have%
\begin{equation*}
\varepsilon _{n}x_{n}=\varepsilon _{n}\Phi \left( \overline{u}_{n}\right)
-\varepsilon _{n}\Phi \left( \widehat{u}_{n}\right) =\varepsilon _{n}\Phi
\left( \widetilde{u}_{n}\right) -\varepsilon _{n}\Phi \left( u_{0}\right)
\end{equation*}%
and%
\begin{equation*}
\varepsilon _{n}y_{n}=\varepsilon _{n}\Phi \left( \overline{v}_{n}\right)
-\varepsilon _{n}\Phi \left( \widehat{v}_{n}\right) =\varepsilon _{n}\Phi
\left( \widetilde{v}_{n}\right) -\varepsilon _{n}\Phi \left( v_{0}\right) ,
\end{equation*}%
and so
\begin{equation*}
dist\left( \varepsilon _{n}x_{n},C_{s}\left( z_{1,i}\right) \right)
\rightarrow 0\text{ and }dist\left( \varepsilon _{n}y_{n},C_{s}\left(
z_{2,j}\right) \right) \rightarrow 0\text{ as }n\rightarrow \infty .
\end{equation*}%
Without loss of generality, we may assume $\varepsilon _{n}x_{n}\rightarrow
x_{0}\in \overline{C_{s}\left( z_{1,i}\right) },\varepsilon
_{n}y_{n}\rightarrow y_{0}\in \overline{C_{s}\left( z_{2,j}\right) }.$ By
condition $\left( D_{2}\right) ,$ we must have that
\begin{equation*}
V_{1}\left( x_{0}\right) =\lambda _{1}=V_{1}\left( z_{1,i}\right)
\end{equation*}%
and%
\begin{equation*}
V_{2}\left( y_{0}\right) =\lambda _{2}=V_{2}\left( z_{2,j}\right) ,
\end{equation*}%
this implies that $x_{0}=y_{0}=z_{1,i}=z_{2,j}.$ Moreover,
\begin{equation*}
\Phi \left( u_{0}\right) =\Phi \left( -x_{n}\ast \overline{u}_{n}\right)
+o\left( 1\right) =-x_{n}+\Phi \left( \widetilde{u}_{n}\right) +o\left(
1\right)
\end{equation*}%
and%
\begin{equation*}
\Phi \left( v_{0}\right) =\Phi \left( -y_{n}\ast \overline{v}_{n}\right)
+o\left( 1\right) =-y_{n}+\Phi \left( \widetilde{v}_{n}\right) +o\left(
1\right) ,
\end{equation*}%
which implies that%
\begin{equation}
x_{n}-y_{n}\rightarrow z_{0}:=\Phi \left( u_{0}\right) -\Phi \left(
v_{0}\right) \text{ as }n\rightarrow \infty .  \label{4.10}
\end{equation}%
It follows from condition $\left( D_{4}\right) ,$ $\varepsilon
_{n}x_{n}\rightarrow 0$ as $n\rightarrow \infty ,\left( \ref{2.34}\right) $
and $(\ref{4.9})$ that
\begin{eqnarray}
\int_{\mathbb{R}^{N}}\beta \left( \varepsilon _{n}x\right) \left\vert
\overline{u}_{n}^{+}\right\vert ^{q+1}\left\vert \overline{v}%
_{n}^{+}\right\vert ^{q+1}\mathrm{d}x &=&\int_{\mathbb{R}^{N}}\beta \left(
\varepsilon _{n}x\right) \left\vert \widehat{u}_{n}^{+}\left( x-x_{n}\right)
\right\vert ^{q+1}\left\vert \widehat{v}_{n}^{+}\left( x-y_{n}\right)
\right\vert ^{q+1}\mathrm{d}x  \notag \\
&=&\int_{\mathbb{R}^{N}}\beta \left( \varepsilon _{n}x+\varepsilon
_{n}x_{n}\right) \left\vert \widehat{u}_{n}^{+}\left( x\right) \right\vert
^{q+1}\left\vert \widehat{v}_{n}^{+}\left( x+x_{n}-y_{n}\right) \right\vert
^{q+1}\mathrm{d}x  \notag \\
&\leq &\frac{1}{2}\int_{C_{s/\varepsilon _{n}}\left( z_{1,i}\right) }\beta
\left( \varepsilon _{n}x+z_{1,i}\right) \left\vert u_{0}^{+}\left( x\right)
\right\vert ^{q+1}\left\vert v_{0}^{+}\left( x+z_{0}\right) \right\vert
^{q+1}  \notag \\
&\leq &-\frac{c_{0}}{2}\int_{\mathbb{R}^{N}}\left\vert u_{0}^{+}\left(
x\right) \right\vert ^{q+1}\left\vert v_{0}^{+}\left( x+z_{0}\right)
\right\vert ^{q+1}\mathrm{d}x  \label{4.11}
\end{eqnarray}%
for $n$ sufficiently large. However, by (\ref{4.3}) and Lemma \ref{L2.2}, we
obtain
\begin{equation*}
\int_{\mathbb{R}^{N}}\beta \left( \varepsilon _{n}x\right) \left\vert
\overline{u}_{n}^{+}\right\vert ^{q+1}\left\vert \overline{v}%
_{n}^{+}\right\vert ^{q+1}\mathrm{d}x\rightarrow 0\text{ as }n\rightarrow
\infty ,
\end{equation*}%
which is a contradiction with (\ref{4.11}). This completes the proof.
\end{proof}

For $0<\varepsilon <\widetilde{\varepsilon }_{0}$, a map $F_{\varepsilon
}^{\left( i,j\right) }:S^{N-1}\rightarrow H$ can be written as
\begin{equation*}
F_{\varepsilon }^{\left( i,j\right) }\left( e\right) =\left(
t_{\varepsilon,i }u_{\varepsilon ,i},s_{\varepsilon,j }v_{\varepsilon
,j}\right) \text{ for }e\in S^{N-1},
\end{equation*}%
where $\widetilde{\varepsilon }_{0}$ is given by Lemma~\ref{4.3}. Here $%
\left( t_{\varepsilon,i }u_{\varepsilon ,i},s_{\varepsilon,j }v_{\varepsilon
,j}\right) $ is as in the proof of Lemma \ref{L2.4}. Note that $\left(
t_{\varepsilon,i },s_{\varepsilon,j }\right) \rightarrow (1,1)$ as $%
\varepsilon \rightarrow 0^{+}$ and by $\left( \ref{2.28}\right) $%
\begin{equation}
\varepsilon ^{-N}J_{\varepsilon }\left( t_{\varepsilon,i }u_{\varepsilon
,i},s_{\varepsilon,j }v_{\varepsilon ,j}\right) \rightarrow \widehat{\alpha }%
_{1}+\widehat{\alpha }_{2}\text{ as }\varepsilon \rightarrow 0^{+}\text{
uniformly in }e\in S^{N-1}.  \label{4.14}
\end{equation}%
Then, from (\ref{4.14}) and Lemma~\ref{L2.3}, we obtain
\begin{equation}
\varepsilon ^{-N}J_{\varepsilon }\left( F_{\varepsilon }^{\left( i,j\right)
}(e)\right) <\widehat{\alpha }_{1}+\widehat{\alpha }_{2}+\delta
_{\varepsilon }\text{ uniformly\ in }e\in S^{N-1},  \label{4.15}
\end{equation}%
where
\begin{equation}
0<\delta _{\varepsilon }=2\max_{e\in S^{N-1}}\left[ \varepsilon
^{-N}\,J_{\varepsilon }\left( F_{\varepsilon }^{\left( i,j\right)
}(e)\right) -\widehat{\alpha }_{1}-\widehat{\alpha }_{2}\right] \rightarrow 0%
\text{ as }\varepsilon \rightarrow 0.  \label{4.16}
\end{equation}%
Here we have used the facts that the map $F_{\varepsilon }$ is continuous
and that the set $F_{\varepsilon }\left( S^{N-1}\right) $ is compact. As a
result, (\ref{4.15}) and (\ref{4.16}) imply:

\begin{lemma}
\label{L4.3}Suppose that $z_{1,i}=z_{2,j}.$ Then there exists $0<\sigma
_{\varepsilon }<\delta _{\varepsilon }$ such that
\begin{equation*}
F_{\varepsilon }^{\left( i,j\right) }\left( S^{N-1}\right) \subset \left[
\varepsilon ^{-N}J_{\varepsilon }\leq \widehat{\alpha }_{1}+\widehat{\alpha }%
_{2}+\delta _{\varepsilon }-\sigma _{\varepsilon }\right] _{i,j}.
\end{equation*}
\end{lemma}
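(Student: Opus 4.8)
The plan is to read the inclusion off directly from the definition of $\delta_{\varepsilon}$ in $(\ref{4.16})$, taking $\sigma_{\varepsilon}$ to be exactly half of $\delta_{\varepsilon}$; the substantive work for this statement has already been carried out in establishing $(\ref{4.15})$--$(\ref{4.16})$. First I would record the ambient membership: by the construction in the proof of Lemma~\ref{L2.4}, for every $e\in S^{N-1}$ and every $0<\varepsilon<\widetilde{\varepsilon}_{0}$ the pair $F_{\varepsilon}^{(i,j)}(e)=\left(t_{\varepsilon,i}u_{\varepsilon,i},s_{\varepsilon,j}v_{\varepsilon,j}\right)$ lies in $N_{i,j}(\varepsilon)$, so that $[\varepsilon^{-N}J_{\varepsilon}\le\,\cdot\,]_{i,j}$ is precisely the set against which the image should be compared.

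Next, set $\sigma_{\varepsilon}:=\delta_{\varepsilon}/2$. By the very definition $(\ref{4.16})$ one has
\[
\frac{\delta_{\varepsilon}}{2}=\max_{e\in S^{N-1}}\Bigl[\varepsilon^{-N}J_{\varepsilon}\bigl(F_{\varepsilon}^{(i,j)}(e)\bigr)-\widehat{\alpha}_{1}-\widehat{\alpha}_{2}\Bigr],
\]
hence, for every $e\in S^{N-1}$,
\[
\varepsilon^{-N}J_{\varepsilon}\bigl(F_{\varepsilon}^{(i,j)}(e)\bigr)\le\widehat{\alpha}_{1}+\widehat{\alpha}_{2}+\frac{\delta_{\varepsilon}}{2}=\widehat{\alpha}_{1}+\widehat{\alpha}_{2}+\delta_{\varepsilon}-\sigma_{\varepsilon}.
\]
Combined with $F_{\varepsilon}^{(i,j)}(e)\in N_{i,j}(\varepsilon)$, this gives $F_{\varepsilon}^{(i,j)}\left(S^{N-1}\right)\subset[\varepsilon^{-N}J_{\varepsilon}\le\widehat{\alpha}_{1}+\widehat{\alpha}_{2}+\delta_{\varepsilon}-\sigma_{\varepsilon}]_{i,j}$, which is the claim.

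The only point that is not pure bookkeeping is the admissibility requirement $0<\sigma_{\varepsilon}<\delta_{\varepsilon}$, i.e. $\delta_{\varepsilon}>0$. Here I would argue that no $F_{\varepsilon}^{(i,j)}(e)$ can attain the value $\varepsilon^{N}(\widehat{\alpha}_{1}+\widehat{\alpha}_{2})$: since $F_{\varepsilon}^{(i,j)}(e)\in\mathbf{N}_{\varepsilon}$, any such equality would force $F_{\varepsilon}^{(i,j)}(e)$ to minimize $J_{\varepsilon}$ on $\mathbf{N}_{\varepsilon}$, hence (the Nehari constraint being natural here, cf. the reasoning behind Lemma~\ref{L4.1}) to be a least energy solution of $(P_{\varepsilon})$ with $\varepsilon^{-N}c_{\varepsilon}=\widehat{\alpha}_{1}+\widehat{\alpha}_{2}$, in contradiction with Lemma~\ref{L2.3}. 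Equivalently, one may observe that the coupling term of $F_{\varepsilon}^{(i,j)}(e)$ vanishes for $\varepsilon$ small, so Lemma~\ref{L2.1} yields $\varepsilon^{-N}J_{\varepsilon}\bigl(F_{\varepsilon}^{(i,j)}(e)\bigr)\ge\widehat{\alpha}_{1}+\widehat{\alpha}_{2}$, with the cutoff rendering the inequality strict. Since $e\mapsto\varepsilon^{-N}J_{\varepsilon}\bigl(F_{\varepsilon}^{(i,j)}(e)\bigr)$ is continuous on the compact set $S^{N-1}$, the maximum in $(\ref{4.16})$ is attained and strictly positive, whence $\delta_{\varepsilon}>0$ and $\sigma_{\varepsilon}=\delta_{\varepsilon}/2\in(0,\delta_{\varepsilon})$. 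I do not anticipate any further obstacle: the lemma is essentially a reformulation of $(\ref{4.15})$--$(\ref{4.16})$ together with this positivity remark.
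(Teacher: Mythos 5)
Your proposal is correct and follows exactly the route the paper intends: the authors state Lemma~\ref{L4.3} as an immediate consequence of (\ref{4.15})--(\ref{4.16}) without writing out the argument, and your choice $\sigma_{\varepsilon}=\delta_{\varepsilon}/2$ together with the observation that (\ref{4.16}) gives $\varepsilon^{-N}J_{\varepsilon}(F_{\varepsilon}^{(i,j)}(e))\le\widehat{\alpha}_{1}+\widehat{\alpha}_{2}+\delta_{\varepsilon}/2$ for every $e$, plus $F_{\varepsilon}^{(i,j)}(e)\in N_{i,j}(\varepsilon)$ from the proof of Lemma~\ref{L2.4}, is precisely that argument. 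Your additional remarks justifying $\delta_{\varepsilon}>0$ (which (\ref{4.16}) asserts but does not prove) are a sensible supplement and do not deviate from the paper's approach.
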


By (\ref{4.16}) and Lemma~\ref{L4.2}, we may define
\begin{equation*}
G_{\varepsilon }^{\left( i,j\right) }:\left[ \varepsilon ^{-N}J_{\varepsilon
}\leq \widehat{\alpha }_{1}+\widehat{\alpha }_{2}+\delta _{\varepsilon }%
\right] _{i,j}\rightarrow S^{N-1}
\end{equation*}%
by
\begin{equation*}
G_{\varepsilon }^{\left( i,j\right) }\left( u,v\right) =-\frac{h(u,v)}{%
\left\vert h(u,v)\right\vert }\text{ for }(u,v)\in \left[ \varepsilon
^{-N}\,J_{\varepsilon }\leq \widehat{\alpha }_{1}+\widehat{\alpha }%
_{2}+\delta _{\varepsilon }\right] _{i,j}.
\end{equation*}%
Then we can prove the following results.

\begin{lemma}
\label{L4.4}Suppose that $z_{1,i}=z_{2,j}.$ Then there exists a positive
number $\varepsilon _{\ast }\leq \widetilde{\varepsilon }_{0}$ such that for
$0<\varepsilon <\varepsilon _{\ast },$ the map $G_{\varepsilon }^{\left(
i,j\right) }\circ F_{\varepsilon }^{\left( i,j\right) }:S^{N-1}\rightarrow
S^{N-1}$ is homotopic to the identity.
\end{lemma}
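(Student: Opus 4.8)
The plan is to compute $G_{\varepsilon }^{\left( i,j\right) }\circ F_{\varepsilon }^{\left( i,j\right) }$ almost explicitly, using the equivariance properties $(\ref{1.11})$–$(\ref{1.12})$ of the barycenter map together with the radial symmetry of the test functions built in $(\ref{2.26})$, and then to conclude by the elementary topological fact that a continuous self-map $f$ of $S^{N-1}$ with $f(e)\neq -e$ for every $e$ is homotopic to the identity. First recall that $F_{\varepsilon }^{\left( i,j\right) }(e)=\left( t_{\varepsilon ,i}u_{\varepsilon ,i},s_{\varepsilon ,j}v_{\varepsilon ,j}\right) $ with $x^{\varepsilon }=\tfrac{\sqrt{\varepsilon }}{2}e$; by $(\ref{2.26})$ the first coordinate is a positive multiple of the radial function $\widehat{\omega }_{1}\psi _{\varepsilon }$ recentered at $z_{1,i}-x^{\varepsilon }$, and the second a positive multiple of the radial function $\widehat{\omega }_{2}\psi _{\varepsilon }$ recentered at $z_{2,j}+x^{\varepsilon }$. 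Since $(\ref{1.11})$ forces the value of $\Phi $ on a radial function to equal its center of symmetry (it is a fixed point of every orthogonal matrix $A$), one obtains $\Phi \left( t_{\varepsilon ,i}u_{\varepsilon ,i}\right) =z_{1,i}-x^{\varepsilon }+o(\sqrt{\varepsilon })$ and $\Phi \left( s_{\varepsilon ,j}v_{\varepsilon ,j}\right) =z_{2,j}+x^{\varepsilon }+o(\sqrt{\varepsilon })$ uniformly in $e\in S^{N-1}$, where the $o(\sqrt{\varepsilon })$ absorbs the continuity modulus of $\Phi $ and the fact that $\left( t_{\varepsilon ,i},s_{\varepsilon ,j}\right) \rightarrow (1,1)$ (in fact these identities are exact, since multiplying a radial function by a positive constant keeps it radial).

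Using $z_{1,i}=z_{2,j}$ and $(\ref{4.1})$, this yields
\[ h\left( F_{\varepsilon }^{\left( i,j\right) }(e)\right) =\Phi \left( t_{\varepsilon ,i}u_{\varepsilon ,i}\right) -\Phi \left( s_{\varepsilon ,j}v_{\varepsilon ,j}\right) =-2x^{\varepsilon }+o(\sqrt{\varepsilon })=-\sqrt{\varepsilon }\,e+o(\sqrt{\varepsilon }) \]
uniformly on $S^{N-1}$, hence $G_{\varepsilon }^{\left( i,j\right) }\!\left( F_{\varepsilon }^{\left( i,j\right) }(e)\right) =\left( e+o_{\varepsilon }(1)\right) /\left| e+o_{\varepsilon }(1)\right| \rightarrow e$ uniformly in $e$. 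Before using this I would check that the composition is well defined for small $\varepsilon $: by Lemma $\ref{L4.3}$, $F_{\varepsilon }^{\left( i,j\right) }(S^{N-1})$ is contained in $\left[ \varepsilon ^{-N}J_{\varepsilon }\leq \widehat{\alpha }_{1}+\widehat{\alpha }_{2}+\delta _{\varepsilon }\right] _{i,j}$, the domain of $G_{\varepsilon }^{\left( i,j\right) }$; and since $\delta _{\varepsilon }\rightarrow 0$ by $(\ref{4.16})$, we have $\delta _{\varepsilon }\leq \delta _{1}$ once $\varepsilon $ is small, so Lemma $\ref{L4.2}$ ensures that $h$ does not vanish on this sublevel set and that $G_{\varepsilon }^{\left( i,j\right) }$ is continuous there. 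As $F_{\varepsilon }^{\left( i,j\right) }$ is continuous, so is $G_{\varepsilon }^{\left( i,j\right) }\circ F_{\varepsilon }^{\left( i,j\right) }:S^{N-1}\rightarrow S^{N-1}$.

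Next I would fix $\varepsilon _{\ast }\leq \widetilde{\varepsilon }_{0}$ so small that, besides the above, the uniform convergence forces $\left| G_{\varepsilon }^{\left( i,j\right) }\!\left( F_{\varepsilon }^{\left( i,j\right) }(e)\right) -e\right| <2$ for every $e\in S^{N-1}$, that is, $G_{\varepsilon }^{\left( i,j\right) }\!\left( F_{\varepsilon }^{\left( i,j\right) }(e)\right) \neq -e$ for all $e$. Then, for $0<\varepsilon <\varepsilon _{\ast }$, the map
\[ \Theta (e,t)=\frac{(1-t)e+t\,G_{\varepsilon }^{\left( i,j\right) }\!\left( F_{\varepsilon }^{\left( i,j\right) }(e)\right) }{\left| (1-t)e+t\,G_{\varepsilon }^{\left( i,j\right) }\!\left( F_{\varepsilon }^{\left( i,j\right) }(e)\right) \right| },\qquad (e,t)\in S^{N-1}\times [0,1], \]
is well defined and continuous — its denominator never vanishes, because a vanishing convex combination of two unit vectors would force them to be antipodal — and satisfies $\Theta (\cdot ,0)=\mathrm{id}_{S^{N-1}}$ and $\Theta (\cdot ,1)=G_{\varepsilon }^{\left( i,j\right) }\circ F_{\varepsilon }^{\left( i,j\right) }$. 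This $\Theta $ is the required homotopy.

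The one step that needs genuine care is the uniform-in-$e$ expansion $h\left( F_{\varepsilon }^{\left( i,j\right) }(e)\right) =-\sqrt{\varepsilon }\,e+o(\sqrt{\varepsilon })$: I must make sure that the displacement $x^{\varepsilon }$, of size $\tfrac{\sqrt{\varepsilon }}{2}$, strictly dominates every error term — the truncation by $\psi _{\varepsilon }$, the Nehari rescalings $t_{\varepsilon ,i},s_{\varepsilon ,j}$, and the modulus of continuity of $\Phi $ — uniformly over the compact sphere $S^{N-1}$; this is precisely why the bumps in $(\ref{2.26})$ are shifted by an amount of order $\sqrt{\varepsilon }$ rather than $\varepsilon $. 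Once that estimate is secured, the remaining topological argument is routine.
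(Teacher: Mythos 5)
Your proof is correct, and it takes a genuinely different and more economical route than the paper's. The paper constructs an explicit two-stage homotopy $\zeta_{\varepsilon}^{(i,j)}(\theta,e)$: for $\theta\in[0,1/2)$ it interpolates the Nehari scalings back to $(u_{\varepsilon,i},v_{\varepsilon,j})$ inside the open set $\Theta$ where $h\neq 0$, and for $\theta\in[1/2,1)$ it replaces the test pair by the new functions $f_{\varepsilon,i}(\cdot,\theta),g_{\varepsilon,j}(\cdot,\theta)$ whose separation $\tfrac{\sqrt{\varepsilon}}{2(1-\theta)}$ blows up as $\theta\to 1^{-}$, so that by $(\ref{4.18})$ the normalized difference converges to $e$; continuity at $\theta=1/2$ and $\theta=1$ is then checked by hand. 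You instead compute $h\circ F_{\varepsilon}^{(i,j)}$ directly from the equivariance $(\ref{1.11})$--$(\ref{1.12})$ and conclude via the classical fact that a self-map of $S^{N-1}$ that is never antipodal to the identity is homotopic to it; your homotopy is the single normalized linear interpolation. Both routes hinge on the same pieces of information — $z_{1,i}=z_{2,j}$, the $\sqrt{\varepsilon}$-separation built into $(\ref{2.26})$, and Lemmas $\ref{L4.2}$ and $\ref{L4.3}$ to guarantee the composition is well-defined — but yours bypasses the construction of the auxiliary functions $f_{\varepsilon,i},g_{\varepsilon,j}$ entirely.

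One remark on the last paragraph of your proposal: the caution about error terms is in fact unnecessary. Taking $\psi_{\varepsilon}$ radial (as one may), the function $t_{\varepsilon,i}\widehat{\omega}_{1}\psi_{\varepsilon}$ is radial, so orthogonal equivariance in $(\ref{1.11})$ forces $\Phi(t_{\varepsilon,i}\widehat{\omega}_{1}\psi_{\varepsilon})=0$, and combining with translation and dilation equivariance gives $\Phi(t_{\varepsilon,i}u_{\varepsilon,i})=z_{1,i}-x^{\varepsilon}$ and $\Phi(s_{\varepsilon,j}v_{\varepsilon,j})=z_{2,j}+x^{\varepsilon}$ \emph{exactly}, with no $o(\sqrt{\varepsilon})$ correction. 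Hence $h\bigl(F_{\varepsilon}^{(i,j)}(e)\bigr)=-\sqrt{\varepsilon}\,e$ exactly and $G_{\varepsilon}^{(i,j)}\circ F_{\varepsilon}^{(i,j)}=\mathrm{id}_{S^{N-1}}$ on the nose, so the never-antipodal lemma, while perfectly adequate, is stronger than what is needed. Your parenthetical observation ``in fact these identities are exact'' is the real content; committing to it would shorten the argument further.
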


\begin{proof}
Let $\overline{G}_{\varepsilon }^{\left( i,j\right) }:\Theta \rightarrow
S^{N-1}$ satisfy
\begin{equation*}
\overline{G}_{\varepsilon }^{\left( i,j\right) }\left( u,v\right) =-\frac{%
h(u,v)}{\left\vert h(u,v)\right\vert }\text{ for }(u,v)\in \Theta ,
\end{equation*}%
where $\Theta =\left\{ \left( u,v\right) \in H\backslash \left\{ 0,0\right\}
:\left\vert h(u,v)\right\vert >0\right\} .$ By Lemma~\ref{L4.2}, one may
regard the map $\overline{G}_{\varepsilon }^{\left( i,j\right) }$ as an
extension of $G_{\varepsilon }^{\left( i,j\right) }$. By $\left(
t_{\varepsilon,i },s_{\varepsilon,j }\right) \rightarrow (1,1)$ as $%
\varepsilon \rightarrow 0^{+}$, for $\theta \in \left[ 0,1/2\right) $,
\begin{equation*}
\left( 1-2\theta \right) F_{\varepsilon }^{\left( i,j\right) }\left(
e\right) +2\theta \left( u_{\varepsilon ,i},v_{\varepsilon ,j}\right)
=\left( u_{\varepsilon ,i},v_{\varepsilon ,j}\right) +o_{\varepsilon }\left(
1\right) \text{ in }H^{\alpha }\left( \mathbb{R}^{N}\right) \times H^{\alpha
}\left( \mathbb{R}^{N}\right) \text{ as }\varepsilon \rightarrow 0^{+}.
\end{equation*}%
Hence as for the proof of Lemma~\ref{L4.2}, there exists a positive number $%
\varepsilon _{\ast }\leq \widetilde{\varepsilon }_{0}$ such that for $%
0<\varepsilon <\varepsilon _{\ast },$ there holds,
\begin{equation}
\left( 1-2\theta \right) F_{\varepsilon }^{\left( i,j\right) }\left(
e\right) +2\theta \left( u_{\varepsilon ,i},v_{\varepsilon ,j}\right) \in
\Theta \text{ for all }e\in S^{N-1}\text{ and }\theta \in \left[
0,1/2\right) .  \label{4.17}
\end{equation}%
On the other hand, for $e\in S^{N-1}$ and $\theta \in \left[ 1/2,1\right) $,
we may set
\begin{equation*}
f_{\varepsilon ,i}\left( x,\theta \right) =\omega _{1}\left( \frac{x-z_{1,i}%
}{\varepsilon }+\frac{e}{4\sqrt{\varepsilon }\left( 1-\theta \right) }%
\right) \psi _{\varepsilon }\left( \frac{x-z_{1,i}}{\varepsilon }+\frac{e}{4%
\sqrt{\varepsilon }\left( 1-\theta \right) }\right)
\end{equation*}%
and
\begin{equation*}
g_{\varepsilon ,j}\left( x,\theta \right) =\omega _{2}\left( \frac{x-z_{2,j}%
}{\varepsilon }-\frac{e}{4\sqrt{\varepsilon }\left( 1-\theta \right) }%
\right) \psi _{\varepsilon }\left( \frac{x-z_{2,j}}{\varepsilon }-\frac{e}{4%
\sqrt{\varepsilon }\left( 1-\theta \right) }\right) ,
\end{equation*}%
where $z_{1,i}$ and $z_{2,j}$ are minimum points of $V_{1}(x)$ and $V_{2}(x)$%
, respectively. Then it is easy to verify that
\begin{equation*}
\Phi (f_{\varepsilon ,i})=z_{1,i}-\frac{\sqrt{\varepsilon }}{4(1-\theta )}%
e+O(\varepsilon )\text{ and }\Phi (g_{\varepsilon ,j})=z_{2,j}+\frac{\sqrt{%
\varepsilon }}{4(1-\theta )}e+O(\varepsilon ),
\end{equation*}%
which implies that
\begin{equation}
h\left( f_{\varepsilon ,i},g_{\varepsilon ,j}\right) =\Phi (f_{\varepsilon
,i})-\Phi (g_{\varepsilon ,j})=z_{1,i}-z_{2,j}-\frac{\sqrt{\varepsilon }}{%
2\left( 1-\theta \right) }e+O(\varepsilon ),  \label{4.18}
\end{equation}%
for $e\in S^{N-1}$ and $\theta \in \left[ 1/2,1\right) $. Consequently, due
to $z_{1,i}=z_{2,j}$,
\begin{equation}
\left( f_{\varepsilon ,i}\left( \cdot ,\theta \right) ,g_{\varepsilon
,j}\left( \cdot ,\theta \right) \right) \in \Theta \text{ for all }e\in
S^{N-1}\text{ and }\theta \in \left[ 1/2,1\right) .  \label{4.19}
\end{equation}%
One may remark that to assure $|h(f_{\varepsilon ,i},g_{\varepsilon ,j})|>0$
for $e\in S^{N-1}$ and $\theta \in \left[ 1/2,1\right) $, it is necessary
for $z_{1,i}$ to be a common minimum point of the $V_{1}(x)$ and $V_{2}(x)$.
Otherwise, if $V_{1}(x)$ and $V_{2}(x)$ have two different minimum points at
$z_{1,i}$ and $z_{2,j}$, one may use (\ref{4.18}) to find some $e\sim \frac{%
z_{1,i}-z_{2,j}}{|z_{1,i}-z_{2,j}|}\in S^{N-1}$ and $\theta \sim 1-\frac{%
\sqrt{\varepsilon }}{2|z_{1,i}-z_{2,j}|}\in \left[ 1/2,1\right) $ such that $%
h(f_{\varepsilon ,i},g_{\varepsilon ,j})=0$ and $(f_{\varepsilon
,i},g_{\varepsilon ,j})\not\in \Theta $.

By (\ref{4.17}) and (\ref{4.19}), we may define
\begin{equation*}
\zeta _{\varepsilon }\left( \theta ,e\right) :\left[ 0,1\right] \times
S^{N-1}\rightarrow S^{N-1}
\end{equation*}%
by
\begin{equation*}
\zeta _{\varepsilon }^{\left( i,j\right) }\left( \theta ,e\right) =\left\{
\begin{array}{ll}
\overline{G}_{\varepsilon }^{\left( i,j\right) }\left( \left( 1-2\theta
\right) F_{\varepsilon }^{\left( i,j\right) }\left( e\right) +2\theta \left(
u_{\varepsilon },v_{\varepsilon }\right) \right) , & \text{for }\theta \in %
\left[ 0,1/2\right) ; \\
\overline{G}_{\varepsilon }^{\left( i,j\right) }\left( f_{\varepsilon
,i}\left( \cdot ,\theta \right) ,g_{\varepsilon ,j}\left( \cdot ,\theta
\right) \right) , & \text{for }\theta \in \left[ 1/2,1\right) ; \\
e, & \text{for }\theta =1.%
\end{array}%
\right.
\end{equation*}%
Then $\zeta _{\varepsilon }^{\left( i,j\right) }\left( 0,e\right) =\overline{%
G}_{\varepsilon }^{\left( i,j\right) }\left( F_{\varepsilon }^{\left(
i,j\right) }\left( e\right) \right) =G_{\varepsilon }^{\left( i,j\right)
}\left( F_{\varepsilon }^{\left( i,j\right) }\left( e\right) \right) $ and $%
\zeta _{\varepsilon }^{\left( i,j\right) }\left( 1,e\right) =e.$ It is easy
to verify that
\begin{equation}
\underset{\theta \rightarrow \frac{1}{2}^{-}}{\lim }\zeta _{\varepsilon
}^{\left( i,j\right) }\left( \theta ,e\right) =\overline{G}_{\varepsilon
}^{\left( i,j\right) }\left( f_{\varepsilon ,i}\left( \cdot ,\frac{1}{2}%
\right) ,g_{\varepsilon ,j}\left( \cdot ,\frac{1}{2}\right) \right) .
\label{4.20}
\end{equation}%
On the other hand, by (\ref{4.18}) and $z_{1,i}=z_{2,j}$, we have
\begin{equation}
\underset{\theta \rightarrow 1^{-}}{\lim }\zeta _{\varepsilon }^{\left(
i,j\right) }\left( \theta ,e\right) =e\text{ for }e\in S^{N-1}.  \label{4.21}
\end{equation}%
Consequently, by (\ref{4.20}) and (\ref{4.21}), we know that $\zeta
_{\varepsilon }^{\left( i,j\right) }\in C\left( \left[ 0,1\right] \times
S^{N-1},S^{N-1}\right) $ and
\begin{equation*}
\zeta _{\varepsilon }^{\left( i,j\right) }\left( 0,e\right) =G_{\varepsilon
}^{\left( i,j\right) }\left( F_{\varepsilon }^{\left( i,j\right) }\left(
e\right) \right) \quad \text{ for\ all }e\in S^{N-1},
\end{equation*}%
\begin{equation*}
\zeta _{\varepsilon }^{\left( i,j\right) }\left( 1,e\right) ,=e~\quad \quad
\quad \quad \quad \text{ for\ all }e\in S^{N-1},
\end{equation*}%
provided that $0<\varepsilon <\varepsilon _{\ast }.$ Therefore, we may
complete the proof.
\end{proof}

By Lemma~\ref{L4.4} and Adachi--Tanaka's~Lemma 2.5 (cf. \cite{AT1}), we
obtain
\begin{equation*}
\mathrm{cat}\left( \left[ \varepsilon ^{-N}J_{\varepsilon }\leq \widehat{%
\alpha }_{1}+\widehat{\alpha }_{2}+\delta _{\varepsilon }\right]
_{i,j}\right) \geq 2,
\end{equation*}%
for $z_{1,i}=z_{2,j}$ and $0<\varepsilon <\varepsilon _{\ast }$. Thus, by
Proposition~\ref{P4.1}, $J_{\varepsilon }$ has at least two critical points
in $M_{i,j}(\varepsilon ,\delta _{\varepsilon })$, where $%
M_{i,j}(\varepsilon ,\delta _{\varepsilon })=\left[ \varepsilon
^{-N}\,J_{\varepsilon }\leq \widehat{\alpha }_{1}+\widehat{\alpha }%
_{2}+\delta _{\varepsilon }\right] _{i,j}\,.$ This implies that problem $%
\left( P_{\varepsilon }\right) $ has two positive solutions $\left( \widehat{%
u}_{\varepsilon ,i},\widehat{v}_{\varepsilon ,j}\right) ,\left( \widetilde{u}%
_{\varepsilon ,i},\widetilde{v}_{\varepsilon ,j}\right) \in N_{i,j}\left(
\varepsilon \right) \,$ such that $\left\vert h\left( \widehat{u}%
_{\varepsilon ,i},\widehat{v}_{\varepsilon ,j}\right) \right\vert >0$ and $%
\left\vert h\left( \widetilde{u}_{\varepsilon ,i},\widetilde{v}_{\varepsilon
,j}\right) \right\vert >0.$ So we may conclude that:

\begin{theorem}
\label{t4.1} Suppose that $z_{1,i}=z_{2,j}.$ Then for each $0<\varepsilon
<\varepsilon _{\ast },$ problem $\left( P_{\varepsilon }\right) $ has at
least two positive solutions $\left( \widehat{u}_{\varepsilon ,i},\widehat{v}%
_{\varepsilon ,j}\right) ,\left( \widetilde{u}_{\varepsilon ,i},\widetilde{v}%
_{\varepsilon ,j}\right) \in M_{i,j}(\varepsilon ,\delta _{\varepsilon })$
such that
\begin{equation*}
\left\vert h\left( \widehat{u}_{\varepsilon ,i},\widehat{v}_{\varepsilon
,j}\right) \right\vert >0\text{ and }\left\vert h\left( \widetilde{u}%
_{\varepsilon ,i},\widetilde{v}_{\varepsilon ,j}\right) \right\vert >0.
\end{equation*}
\end{theorem}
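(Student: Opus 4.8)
The plan is to harvest the conclusion from the Lusternik--Schnirelmann apparatus already built in this section, so almost all the work has been done in Lemmas~\ref{L4.2}--\ref{L4.4}. First I would fix $0<\varepsilon<\varepsilon_{\ast}$, with $\varepsilon_{\ast}\leq\widetilde{\varepsilon}_{0}$ provided by Lemma~\ref{L4.4} and further shrunk so that $\delta_{\varepsilon}\leq\min\{\delta_{0},\delta_{1}\}$ (possible because $\delta_{\varepsilon}\to 0$ as $\varepsilon\to 0$ by~(\ref{4.16})). With this choice, Lemma~\ref{L4.3} gives $F_{\varepsilon}^{(i,j)}(S^{N-1})\subset M_{i,j}(\varepsilon,\delta_{\varepsilon})$, Lemma~\ref{L4.2} guarantees that $G_{\varepsilon}^{(i,j)}$ is well defined and continuous on all of $M_{i,j}(\varepsilon,\delta_{\varepsilon})$, and Lemma~\ref{L4.4} says that $G_{\varepsilon}^{(i,j)}\circ F_{\varepsilon}^{(i,j)}: S^{N-1}\to S^{N-1}$ is homotopic to $\mathrm{id}_{S^{N-1}}$. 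Hence $\mathrm{id}_{S^{N-1}}$ factors, up to homotopy, through the sublevel set $M_{i,j}(\varepsilon,\delta_{\varepsilon})$.

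The second step is the category estimate. Since the identity of $S^{N-1}$ factors through $M_{i,j}(\varepsilon,\delta_{\varepsilon})$, Adachi--Tanaka's Lemma~2.5 in~\cite{AT1} yields $\mathrm{cat}(M_{i,j}(\varepsilon,\delta_{\varepsilon}))\geq\mathrm{cat}(S^{N-1})=2$ for every $0<\varepsilon<\varepsilon_{\ast}$. Note that $M_{i,j}(\varepsilon,\delta_{\varepsilon})$ really is a subset of the open set $N_{i,j}(\varepsilon)$ and disjoint from its boundary $O_{i,j}(\varepsilon)$, because $\delta_{\varepsilon}<\delta_{0}$ together with~(\ref{3.2}) forces every $(u,v)$ with $\varepsilon^{-N}J_{\varepsilon}(u,v)\leq\widehat{\alpha}_{1}+\widehat{\alpha}_{2}+\delta_{\varepsilon}$ to avoid $O_{i,j}(\varepsilon)$; thus the category is computed on a set where the constrained variational scheme of Proposition~\ref{P3.2} applies.

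Finally I would feed $\mathrm{cat}(M_{i,j}(\varepsilon,\delta_{\varepsilon}))\geq 2$ into Proposition~\ref{P4.1} with $k=2$ (legitimate since $\delta_{\varepsilon}\leq\delta_{0}$): $J_{\varepsilon}$ has at least two distinct critical points $(\widehat{u}_{\varepsilon,i},\widehat{v}_{\varepsilon,j})$ and $(\widetilde{u}_{\varepsilon,i},\widetilde{v}_{\varepsilon,j})$ in $M_{i,j}(\varepsilon,\delta_{\varepsilon})$. By Lemma~\ref{L4.1} each of these constrained critical points is in fact a free critical point of $J_{\varepsilon}$ on $H$, hence a nonnegative solution of $(P_{\varepsilon})$; since both belong to $N_{i,j}(\varepsilon)\subset\mathbf{T}$, both components are nontrivial and nonnegative, and the strong maximum principle for the fractional Laplacian~\cite{Sil} makes them strictly positive. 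Because $\delta_{\varepsilon}\leq\delta_{1}$, Lemma~\ref{L4.2} applies to every member of $M_{i,j}(\varepsilon,\delta_{\varepsilon})$, so in particular $|h(\widehat{u}_{\varepsilon,i},\widehat{v}_{\varepsilon,j})|>0$ and $|h(\widetilde{u}_{\varepsilon,i},\widetilde{v}_{\varepsilon,j})|>0$, which finishes the argument. The step I expect to require the most care is not any single estimate but the coordination of thresholds: one has to verify that a single $\varepsilon_{\ast}>0$ can be chosen so that Lemmas~\ref{L4.2}--\ref{L4.4} and Propositions~\ref{P3.2} and~\ref{P4.1} are simultaneously in force, and that the homotopy constructed in Lemma~\ref{L4.4} never leaves the region on which $G_{\varepsilon}^{(i,j)}$ is defined; both are secured by $\delta_{\varepsilon}\to 0$ and the nesting $M_{i,j}(\varepsilon,\delta_{\varepsilon}-\sigma_{\varepsilon})\subset M_{i,j}(\varepsilon,\delta_{\varepsilon})$ coming from Lemma~\ref{L4.3}.
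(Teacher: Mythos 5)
Your proposal is correct and follows essentially the same route as the paper: you combine Lemma~\ref{L4.4} with Adachi--Tanaka's lemma to get $\mathrm{cat}(M_{i,j}(\varepsilon,\delta_\varepsilon))\geq 2$, feed this into Proposition~\ref{P4.1} to produce two constrained critical points, and invoke Lemma~\ref{L4.1}, the maximum principle, and Lemma~\ref{L4.2} to conclude positivity and $|h|>0$. The only difference is that you spell out the coordination of the thresholds $\delta_\varepsilon\leq\min\{\delta_0,\delta_1\}$ and the inclusion $F_\varepsilon^{(i,j)}(S^{N-1})\subset M_{i,j}(\varepsilon,\delta_\varepsilon)$ more explicitly than the paper does, which the paper leaves implicit.
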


\noindent \textbf{We are now ready to prove Theorem \ref{t1.1}}. By Theorem %
\ref{t3.1} and Theorem \ref{t4.1}, we can conclude that problem $\left(
P_{\varepsilon }\right) $ has at least $k\times \ell +m$ positive solutions.
As for the proofs of $(\ref{4.9})$ and $(\ref{4.10}),$ we may use $(\ref%
{4.16})$ and a similar argument to that in the proof of \cite[Theorem 2.4]%
{lw1}, we can conclude that part $\left( ii\right) $ is holds.

\section{Proof of Theorem \protect\ref{t1.2}}

By Lemmas \ref{L2.2} and \ref{L2.4}, there exists a positive function $%
\delta _{\varepsilon }$ with $\delta _{\varepsilon }\rightarrow 0$ as $%
\varepsilon \rightarrow 0$ such that the sublevel set
\begin{equation*}
\widehat{M}(\varepsilon ,\delta _{\varepsilon })=\left\{ \left( u,v\right)
\in \mathbf{N}_{\varepsilon }:\varepsilon ^{-N}J_{\varepsilon }\left(
u,v\right) \leq \widehat{\alpha }_{1}+\widehat{\alpha }_{2}+\delta
_{\varepsilon }\right\}
\end{equation*}%
is nonempty. Then we have the following results.

\begin{lemma}
\label{L5.1}Assume that $\liminf\limits_{|x|\rightarrow \infty
}\,V_{l}(x)\equiv V_{l,\infty }>\lambda _{l}$ for all $l=1,2.$ Then%
\begin{equation}
\lim_{\varepsilon \rightarrow 0}\sup_{\left( u,v\right) \in \widehat{M}%
(\varepsilon ,\delta _{\varepsilon })}\inf_{\left( x,y\right) \in \mathbf{B}%
}\left\vert \left( \Phi \left( u\right) ,\Phi \left( v\right) \right)
-\left( x,y\right) \right\vert =0.  \label{5.1}
\end{equation}%
where $\mathbf{B}=\underset{1\leq i\leq k,1\leq j\leq \ell }{\cup }\left[
C_{s/2}\left( z_{1,i}\right) \times C_{s/2}\left( z_{2,j}\right) \right] .$
\end{lemma}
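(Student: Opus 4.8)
The plan is to argue by contradiction. Suppose (\ref{5.1}) fails; then there are $\eta>0$, a sequence $\varepsilon_n\rightarrow 0^{+}$ and $\left(u_n,v_n\right)\in\widehat{M}(\varepsilon_n,\delta_{\varepsilon_n})$ with
\begin{equation*}
\inf_{\left(x,y\right)\in\mathbf{B}}\left\vert\left(\Phi\left(u_n\right),\Phi\left(v_n\right)\right)-\left(x,y\right)\right\vert\geq\eta\quad\text{for all }n\in\mathbb{N}.
\end{equation*}
Put $\widetilde{u}_n\left(x\right)=u_n\left(\varepsilon_n x\right)$ and $\widetilde{v}_n\left(x\right)=v_n\left(\varepsilon_n x\right)$, so that $\left(u_n,v_n\right)\in\mathbf{N}_{\varepsilon_n}$ and $\widetilde{J}_{\varepsilon_n}\left(\widetilde{u}_n,\widetilde{v}_n\right)=\varepsilon_n^{-N}J_{\varepsilon_n}\left(u_n,v_n\right)=\widehat{\alpha}_1+\widehat{\alpha}_2+o\left(1\right)$. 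First I would feed this into Lemma~\ref{L2.2}: since $\delta_{\varepsilon_n}\rightarrow 0$, for every fixed $\sigma>0$ we eventually have $\varepsilon_n^{-N}J_{\varepsilon_n}\left(u_n,v_n\right)\leq\widehat{\alpha}_1+\widehat{\alpha}_2+\overline{\delta}$, hence $\int_{\mathbb{R}^{N}}\beta\left(\varepsilon_n x\right)\left\vert\widetilde{u}_n^{+}\right\vert^{q+1}\left\vert\widetilde{v}_n^{+}\right\vert^{q+1}\mathrm{d}x\rightarrow 0$ exactly as in (\ref{2.31}). Let $0<s_n,t_n\leq 1$ be the scalars with $s_n\widetilde{u}_n\in\widehat{\mathbf{M}}_1$, $t_n\widetilde{v}_n\in\widehat{\mathbf{M}}_2$ constructed as in the proof of Lemma~\ref{L2.1}. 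Combining the decoupling with the chain
\begin{equation*}
\widehat{\alpha}_1+\widehat{\alpha}_2+o\left(1\right)=\widetilde{J}_{\varepsilon_n}\left(\widetilde{u}_n,\widetilde{v}_n\right)\geq\frac{p}{2p+2}\left(A_n+B_n\right)\geq\widehat{I}_1\left(s_n\widetilde{u}_n\right)+\widehat{I}_2\left(t_n\widetilde{v}_n\right)\geq\widehat{\alpha}_1+\widehat{\alpha}_2
\end{equation*}
given by (\ref{2.16}) and the estimates in the proof of Lemma~\ref{L2.1}, where $A_n=\int_{\mathbb{R}^{N}}\left(\left\vert(-\triangle)^{\frac{\alpha}{2}}\widetilde{u}_n\right\vert^{2}+V_1\left(\varepsilon_n x\right)\widetilde{u}_n^{2}\right)\mathrm{d}x$ and $B_n$ is the analogous quantity for $\widetilde{v}_n$, I conclude that all these inequalities are equalities up to $o\left(1\right)$. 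In particular $s_n,t_n\rightarrow 1$, $\widehat{I}_1\left(s_n\widetilde{u}_n\right)=\widehat{\alpha}_1+o\left(1\right)$, $\widehat{I}_2\left(t_n\widetilde{v}_n\right)=\widehat{\alpha}_2+o\left(1\right)$, and since both $\frac{p}{2p+2}A_n$ and $\frac{p}{2p+2}\int_{\mathbb{R}^{N}}\left(\left\vert(-\triangle)^{\frac{\alpha}{2}}\widetilde{u}_n\right\vert^{2}+\lambda_1\widetilde{u}_n^{2}\right)\mathrm{d}x$ converge to $\widehat{\alpha}_1$, their difference gives $\int_{\mathbb{R}^{N}}\left(V_1\left(\varepsilon_n x\right)-\lambda_1\right)\widetilde{u}_n^{2}\mathrm{d}x\rightarrow 0$, and likewise $\int_{\mathbb{R}^{N}}\left(V_2\left(\varepsilon_n x\right)-\lambda_2\right)\widetilde{v}_n^{2}\mathrm{d}x\rightarrow 0$.

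Next I would set $\overline{u}_n=s_n\widetilde{u}_n\in\widehat{\mathbf{M}}_1$ and $\overline{v}_n=t_n\widetilde{v}_n\in\widehat{\mathbf{M}}_2$; these minimize $\widehat{I}_1,\widehat{I}_2$ over their Nehari manifolds, so by the Ekeland variational principle \cite{E} we may assume $\left\{\overline{u}_n\right\}$ is a $(PS)_{\widehat{\alpha}_1}$--sequence of $\widehat{I}_1$ and $\left\{\overline{v}_n\right\}$ a $(PS)_{\widehat{\alpha}_2}$--sequence of $\widehat{I}_2$. Running the concentration--compactness argument from the proof of Lemma~\ref{L4.2} I obtain $R,b_0>0$ and $\left\{x_n\right\},\left\{y_n\right\}\subset\mathbb{R}^{N}$ with $\int_{B^{N}\left(x_n;R\right)}\left\vert\overline{u}_n^{+}\right\vert^{2p+2}\mathrm{d}x\geq b_0$ and $\int_{B^{N}\left(y_n;R\right)}\left\vert\overline{v}_n^{+}\right\vert^{2p+2}\mathrm{d}x\geq b_0$, such that the translates $\widehat{u}_n\left(\cdot\right)=\overline{u}_n\left(\cdot+x_n\right)$ and $\widehat{v}_n\left(\cdot\right)=\overline{v}_n\left(\cdot+y_n\right)$ converge strongly in $H^{\alpha}\left(\mathbb{R}^{N}\right)$ to positive solutions $u_0,v_0$ of Eq.~$\left(\widehat{E}_1\right)$ and Eq.~$\left(\widehat{E}_2\right)$ with $\widehat{I}_1\left(u_0\right)=\widehat{\alpha}_1$, $\widehat{I}_2\left(v_0\right)=\widehat{\alpha}_2$; in particular $\int_{B^{N}\left(0;R\right)}u_0^{2}>0$ and $\int_{B^{N}\left(0;R\right)}v_0^{2}>0$.

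The hard part — and the only place where the hypothesis $V_{l,\infty}>\lambda_l$ is used — is to locate the concentration centres $\varepsilon_n x_n$ and $\varepsilon_n y_n$. Translating $\int_{\mathbb{R}^{N}}\left(V_1\left(\varepsilon_n x\right)-\lambda_1\right)\widetilde{u}_n^{2}\mathrm{d}x\rightarrow 0$ by $x_n$ and using $s_n\rightarrow 1$ gives $\int_{B^{N}\left(0;R\right)}\left(V_1\left(\varepsilon_n x+\varepsilon_n x_n\right)-\lambda_1\right)\left\vert\widehat{u}_n\right\vert^{2}\mathrm{d}x\rightarrow 0$, the integrand being nonnegative by $\left(D_2\right)$. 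If $\left\vert\varepsilon_n x_n\right\vert\rightarrow\infty$ along a subsequence, then $\left\vert\varepsilon_n x+\varepsilon_n x_n\right\vert\rightarrow\infty$ uniformly for $x\in B^{N}\left(0;R\right)$, hence $\liminf_{n}\inf_{x\in B^{N}\left(0;R\right)}V_1\left(\varepsilon_n x+\varepsilon_n x_n\right)\geq V_{1,\infty}>\lambda_1$; since $\widehat{u}_n\rightarrow u_0$ in $L^{2}_{loc}\left(\mathbb{R}^{N}\right)$ with $\int_{B^{N}\left(0;R\right)}u_0^{2}>0$, the integral would stay bounded below by a positive constant, a contradiction. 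Thus $\left\{\varepsilon_n x_n\right\}$ is bounded; up to a subsequence $\varepsilon_n x_n\rightarrow x_0$, and uniform continuity of $V_1$ yields $\left(V_1\left(x_0\right)-\lambda_1\right)\int_{B^{N}\left(0;R\right)}u_0^{2}=0$, so $V_1\left(x_0\right)=\lambda_1$ and $x_0$ is a global minimum point of $V_1$, i.e. $x_0=z_{1,i_0}$ for some $1\leq i_0\leq k$. Likewise $\varepsilon_n y_n\rightarrow y_0=z_{2,j_0}$ for some $1\leq j_0\leq\ell$.

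Finally I would read off the barycenters. By (\ref{1.12}), $\Phi\left(u_n\right)=\varepsilon_n\Phi\left(\widetilde{u}_n\right)$; writing $\widetilde{u}_n=s_n^{-1}\left(x_n\ast\widehat{u}_n\right)$ with $s_n\rightarrow 1$ and $\widehat{u}_n\rightarrow u_0$ strongly, the continuity of $\Phi$ together with (\ref{1.11}) gives $\Phi\left(\widetilde{u}_n\right)=x_n+\Phi\left(u_0\right)+o\left(1\right)$, so $\Phi\left(u_n\right)=\varepsilon_n x_n+\varepsilon_n\left(\Phi\left(u_0\right)+o\left(1\right)\right)\rightarrow x_0=z_{1,i_0}$, and similarly $\Phi\left(v_n\right)\rightarrow z_{2,j_0}$. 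Since $z_{1,i_0}$ and $z_{2,j_0}$ are the centres of the open cubes $C_{s/2}\left(z_{1,i_0}\right)$ and $C_{s/2}\left(z_{2,j_0}\right)$, we have $\left(z_{1,i_0},z_{2,j_0}\right)\in C_{s/2}\left(z_{1,i_0}\right)\times C_{s/2}\left(z_{2,j_0}\right)\subset\mathbf{B}$, whence $\inf_{\left(x,y\right)\in\mathbf{B}}\left\vert\left(\Phi\left(u_n\right),\Phi\left(v_n\right)\right)-\left(x,y\right)\right\vert\rightarrow 0$, contradicting the choice of $\eta$. This proves (\ref{5.1}). I expect the delicate part to be the concentration--compactness step producing the single profiles $u_0,v_0$ and verifying that no mass escapes to infinity; once these are in hand the rest is bookkeeping with the rescalings and the equivariance (\ref{1.11})--(\ref{1.12}) of $\Phi$.
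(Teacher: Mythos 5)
Your proof is correct and follows the same route as the paper: rescale, use the sublevel bound with Lemma \ref{L2.2} to kill the coupling term, run the Lemma \ref{L2.5}/Lemma \ref{L4.2} concentration--compactness machinery to extract strongly convergent translated profiles, invoke $V_{l,\infty}>\lambda_{l}$ to bound $\{\varepsilon_{n}\widetilde{x}_{n}\}$ and $\{\varepsilon_{n}\widetilde{y}_{n}\}$ and identify their limits with global minimum points $(z_{1,i_{0}},z_{2,j_{0}})$, and finally read off the barycenters via the equivariance (\ref{1.11})--(\ref{1.12}) of $\Phi$. The only cosmetic difference is that you phrase it as a contradiction and write out the barycenter computation and the local-mass argument more explicitly, whereas the paper compresses those steps into a direct construction of the points $(x_{n},y_{n})$ and a global Fatou-type comparison with $V_{l,\infty}$; the substance is identical.
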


\begin{proof}
Let $\varepsilon _{n}\rightarrow 0$ as $n\rightarrow \infty ;$ for any $n\in
\mathbb{N}$, there exists $\left( u_{n},v_{n}\right) \in \widehat{M}%
(\varepsilon ,\delta _{\varepsilon _{n}})$ such that
\begin{equation*}
\inf_{\left( x,y\right) \in \mathbf{B}}\left\vert \left( \Phi \left(
u_{n}\right) ,\Phi \left( v_{n}\right) \right) -\left( x,y\right)
\right\vert =\sup_{\left( u,v\right) \in \widehat{M}(\varepsilon ,\delta
_{\varepsilon _{n}})}\inf_{\left( x,y\right) \in \mathbf{B}}\left\vert
\left( \Phi \left( u\right) ,\Phi \left( v\right) \right) -\left( x,y\right)
\right\vert +o\left( 1\right) .
\end{equation*}%
To prove $\left( \ref{5.1}\right) ,$ it is sufficient to find points $\left(
x_{n},y_{n}\right) \in \mathbf{B}$ such that%
\begin{equation}
\lim_{n\rightarrow \infty }\left\vert \left( \Phi \left( u_{n}\right) ,\Phi
\left( v_{n}\right) \right) -\left( x_{n},y_{n}\right) \right\vert =0,
\label{5.2}
\end{equation}%
possibly up to subsequence. For any $n\in \mathbb{N},$ let $\widetilde{u}%
_{n}\left( x\right) =u_{n}\left( \varepsilon _{n}x\right) $ and $\widetilde{v%
}_{n}\left( x\right) =v_{n}\left( \varepsilon _{n}x\right) .$ Then similar
to the argument in the proof of Lemma \ref{L2.5}, we have
\begin{equation*}
\lim_{n\rightarrow \infty }\int_{\mathbb{R}^{N}}\left( \left\vert
(-\triangle )^{\frac{\alpha }{2}}\widetilde{u}_{n}\right\vert
^{2}+V_{1}\left( \varepsilon _{n}x\right) \widetilde{u}_{n}^{2}\right)
\mathrm{d}x=\frac{2p+2}{p}\widehat{\alpha }_{1}
\end{equation*}%
and%
\begin{equation*}
\lim_{n\rightarrow \infty }\int_{\mathbb{R}^{N}}\left( \left\vert
(-\triangle )^{\frac{\alpha }{2}}\widetilde{v}_{n}\right\vert
^{2}+V_{2}\left( \varepsilon _{n}x\right) \widetilde{v}_{n}^{2}\right)
\mathrm{d}x=\frac{2p+2}{p}\widehat{\alpha }_{2}.
\end{equation*}%
Furthermore, there exists $\left\{ \left( \widetilde{x}_{n},\widetilde{y}%
_{n}\right) \right\} \subset \mathbb{R}^{N}\times \mathbb{R}^{N}$ such that
\newline
$(i)\ \widetilde{u}_{n}\left( \cdot +\widetilde{x}_{n}\right) $ converges
strongly in $H^{\alpha }\left( \mathbb{R}^{N}\right) $ to $\widehat{u}_{0},$
a positive ground state solution of $\left( \widehat{E}_{1}\right) ;$\newline
$(ii)\ \widetilde{v}_{n}\left( \cdot +\widetilde{y}_{n}\right) $ converges
strongly in $H^{\alpha }\left( \mathbb{R}^{N}\right) $ to $\widehat{v}_{0},$
a positive ground state solution of $\left( \widehat{E}_{2}\right) .$\newline
Let us prove that $\left\{ \left( \varepsilon _{n}\widetilde{x}%
_{n},\varepsilon _{n}\widetilde{y}_{n}\right) \right\} $ is a bounded
sequence in $\mathbb{R}^{N}\times \mathbb{R}^{N}.$ Arguing by contradiction,
we assume that $\left\vert \varepsilon _{n}\widetilde{x}_{n}\right\vert
\rightarrow \infty $ as $n\rightarrow \infty .$ Then%
\begin{eqnarray*}
&&\lim_{n\rightarrow \infty }\int_{\mathbb{R}^{N}}\left( \left\vert
(-\triangle )^{\frac{\alpha }{2}}\widetilde{u}_{n}\right\vert
^{2}+V_{1}\left( \varepsilon _{n}x\right) \widetilde{u}_{n}^{2}\right)
\mathrm{d}x \\
&=&\lim_{n\rightarrow \infty }\int_{\mathbb{R}^{N}}\left( \left\vert
(-\triangle )^{\frac{\alpha }{2}}\widetilde{u}_{n}\left( x+\widetilde{x}%
_{n}\right) \right\vert ^{2}+V_{1}\left( \varepsilon _{n}x+\varepsilon _{n}%
\widetilde{x}_{n}\right) \widetilde{u}_{n}^{2}\left( x+\widetilde{x}%
_{n}\right) \right) \mathrm{d}x \\
&\geq &\int_{\mathbb{R}^{N}}\left( \left\vert (-\triangle )^{\frac{\alpha }{2%
}}u_{0}\right\vert ^{2}+V_{1,\infty }u_{0}^{2}\right) \mathrm{d}x>\int_{%
\mathbb{R}^{N}}\left( \left\vert (-\triangle )^{\frac{\alpha }{2}%
}u_{0}\right\vert ^{2}+\lambda _{1}u_{0}^{2}\right) \mathrm{d}x \\
&=&\frac{2p+2}{p}\widehat{\alpha }_{1},
\end{eqnarray*}%
which is a contradiction. Thus, $\left\{ \left( \varepsilon _{n}\widetilde{x}%
_{n},\varepsilon _{n}\widetilde{y}_{n}\right) \right\} $ is bounded and
converges to some $\left( x_{0},y_{0}\right) $ (up to a subsequence). We are
left to prove $\left( x_{0},y_{0}\right) =\left( z_{1,i},z_{2,j}\right) $
for some $1\leq i\leq k,1\leq j\leq \ell .$ Because
\begin{eqnarray*}
\frac{2p+2}{p}\widehat{\alpha }_{1} &=&\int_{\mathbb{R}^{N}}\left(
\left\vert (-\triangle )^{\frac{\alpha }{2}}u_{0}\right\vert ^{2}+\lambda
_{1}u_{0}^{2}\right) \mathrm{d}x \\
&=&\lim_{n\rightarrow \infty }\int_{\mathbb{R}^{N}}\left[ \left\vert
(-\triangle )^{\frac{\alpha }{2}}\widetilde{u}_{n}\left( x+\widetilde{x}%
_{n}\right) \right\vert ^{2}+V_{1}\left( \varepsilon _{n}x+\varepsilon _{n}%
\widetilde{x}_{n}\right) \widetilde{u}_{n}^{2}\left( x+\widetilde{x}%
_{n}\right) \right] \mathrm{d}x \\
&=&\int_{\mathbb{R}^{N}}\left( \left\vert (-\triangle )^{\frac{\alpha }{2}%
}u_{0}\right\vert ^{2}+V_{1,\infty }(x_{0})u_{0}^{2}\right) \mathrm{d}x
\end{eqnarray*}%
and%
\begin{eqnarray*}
\frac{2p+2}{p}\widehat{\alpha }_{2} &=&\int_{\mathbb{R}^{N}}\left(
\left\vert (-\triangle )^{\frac{\alpha }{2}}v_{0}\right\vert ^{2}+\lambda
_{2}v_{0}^{2}\right) \mathrm{d}x \\
&=&\lim_{n\rightarrow \infty }\int_{\mathbb{R}^{N}}\left[ \left\vert
(-\triangle )^{\frac{\alpha }{2}}\widetilde{v}_{n}\left( x+\widetilde{y}%
_{n}\right) \right\vert ^{2}+V_{2}\left( \varepsilon _{n}x+\varepsilon _{n}%
\widetilde{y}_{n}\right) \widetilde{v}_{n}^{2}\left( x+\widetilde{y}%
_{n}\right) \right] \mathrm{d}x \\
&=&\int_{\mathbb{R}^{N}}\left( \left\vert (-\triangle )^{\frac{\alpha }{2}%
}v_{0}\right\vert ^{2}+V_{2,\infty }(y_{0})v_{0}^{2}\right) \mathrm{d}x,
\end{eqnarray*}%
which implies that $V_{1,\infty }\left( x_{0}\right) =\lambda _{1}$ and $%
V_{2,\infty }\left( y_{0}\right) =\lambda _{2},$ that is, there exists $%
1\leq i\leq k,1\leq j\leq \ell $ such that $\left( x_{0},y_{0}\right)
=\left( z_{1,i},z_{2,j}\right) .$ Take $x_{n}=\varepsilon _{n}\widetilde{x}%
_{n}$ and $y_{n}=\varepsilon _{n}\widetilde{y}_{n}.$ Then $\left( \ref{5.2}%
\right) $ holds.
\end{proof}

\begin{lemma}
\label{L5.2} Assume that $\liminf\limits_{|x|\rightarrow \infty
}\,V_{l}(x)\equiv V_{l,\infty }>\lambda _{l}$ for all $l=1,2.$ Then there
exists a positive number $\varepsilon _{\ast \ast }$ such that for every $%
\varepsilon <\varepsilon _{\ast \ast },$ we have $\widehat{M}(\varepsilon
,\delta _{\varepsilon })\subset \underset{1\leq i\leq k,1\leq j\leq \ell }{%
\cup }N_{i,j}\left( \varepsilon \right) .$
\end{lemma}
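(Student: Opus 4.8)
The strategy is to read off the inclusion directly from the quantitative localization already established in Lemma~\ref{L5.1}. Every $(u,v)\in\widehat{M}(\varepsilon,\delta_\varepsilon)$ belongs to $\mathbf{N}_\varepsilon\subset\mathbf{T}$, so $u\not\equiv 0$ and $v\not\equiv 0$ and the barycenters $\Phi(u),\Phi(v)$ are well defined. By Lemma~\ref{L5.1} and $(\ref{5.1})$, the quantity
\[
\rho(\varepsilon):=\sup_{(u,v)\in\widehat{M}(\varepsilon,\delta_\varepsilon)}\ \inf_{(x,y)\in\mathbf{B}}\big|(\Phi(u),\Phi(v))-(x,y)\big|
\]
tends to $0$ as $\varepsilon\to 0$, so the first step is to pick $\varepsilon_{\ast\ast}>0$ with $\rho(\varepsilon)<s/2$ for all $0<\varepsilon<\varepsilon_{\ast\ast}$, where $0<s<r_{0}$ is the fixed half-width used to define the cubes $C_{s}(\cdot)$. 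Since $\rho(\varepsilon)$ is a supremum over all of $\widehat{M}(\varepsilon,\delta_\varepsilon)$, this choice of $\varepsilon_{\ast\ast}$ is automatically independent of the particular pair $(u,v)$.

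With $\varepsilon_{\ast\ast}$ fixed in this way, the argument for a given $0<\varepsilon<\varepsilon_{\ast\ast}$ and $(u,v)\in\widehat{M}(\varepsilon,\delta_\varepsilon)$ would go as follows. From $\inf_{(x,y)\in\mathbf{B}}|(\Phi(u),\Phi(v))-(x,y)|\le\rho(\varepsilon)<s/2$ and $\mathbf{B}=\bigcup_{i,j}\big[C_{s/2}(z_{1,i})\times C_{s/2}(z_{2,j})\big]$, one finds indices $1\le i\le k$, $1\le j\le\ell$ and a point $(x,y)\in C_{s/2}(z_{1,i})\times C_{s/2}(z_{2,j})$ with $|(\Phi(u),\Phi(v))-(x,y)|<s/2$. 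Working componentwise, for every $n=1,\dots,N$,
\[
|\Phi(u)_{n}-(z_{1,i})_{n}|\le|\Phi(u)_{n}-x_{n}|+|x_{n}-(z_{1,i})_{n}|<\tfrac{s}{2}+\tfrac{s}{2}=s,
\]
so $\Phi(u)\in C_{s}(z_{1,i})$, and in the same way $\Phi(v)\in C_{s}(z_{2,j})$. By the very definition of $N_{i,j}(\varepsilon)$ this means $(u,v)\in N_{i,j}(\varepsilon)$, and since $(u,v)$ was an arbitrary element of $\widehat{M}(\varepsilon,\delta_\varepsilon)$ we obtain $\widehat{M}(\varepsilon,\delta_\varepsilon)\subset\bigcup_{1\le i\le k,\ 1\le j\le\ell}N_{i,j}(\varepsilon)$.

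I do not expect any genuine obstacle in this lemma: all the compactness and energy analysis has already been carried out in the proof of Lemma~\ref{L5.1} (the profile decomposition of the rescaled functions $\widetilde{u}_n,\widetilde{v}_n$ together with the strict inequalities $V_{l,\infty}>\lambda_{l}$), and the present statement is a purely geometric repackaging of that conclusion. The only mildly delicate point is the passage from a small Euclidean distance to $\mathbf{B}$ to actual membership in the enlarged cubes $C_{s}(z_{1,i})$ and $C_{s}(z_{2,j})$; this is exactly why the cubes in $\mathbf{B}$ were chosen of half-width $s/2$, and it is handled by the componentwise triangle inequality above.
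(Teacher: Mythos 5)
Your proof is correct and follows essentially the same route as the paper: invoke Lemma~\ref{L5.1} to make the supremum of the distance to $\mathbf{B}$ smaller than $s/2$ for small $\varepsilon$, and then pass from the $C_{s/2}$ cubes defining $\mathbf{B}$ to the $C_{s}$ cubes defining $N_{i,j}(\varepsilon)$. In fact your componentwise triangle-inequality step is slightly more careful than the paper's, which abbreviates this passage by writing $(\Phi(u),\Phi(v))\in\mathbf{B}$ when what is really meant (and what you prove) is $\Phi(u)\in C_{s}(z_{1,i})$ and $\Phi(v)\in C_{s}(z_{2,j})$.
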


\begin{proof}
By Lemma \ref{L5.1}, we can find $\varepsilon _{\ast \ast }>0$ such that for
every $\varepsilon <\varepsilon _{\ast \ast },$ we have%
\begin{equation*}
\sup_{\left( u,v\right) \in \widehat{M}(\varepsilon ,\delta _{\varepsilon
})}\inf_{\left( x,y\right) \in \mathbf{B}}\left\vert \left( \Phi \left(
u\right) ,\Phi \left( v\right) \right) -\left( x,y\right) \right\vert <\frac{%
s}{2}
\end{equation*}%
or%
\begin{equation*}
\mathrm{dist}\left( \left( \Phi \left( u\right) ,\Phi \left( v\right)
\right) ,\mathbf{B}\right) <\frac{s}{2}\text{ for all }\left( u,v\right) \in
\widehat{M}(\varepsilon ,\delta _{\varepsilon }),
\end{equation*}%
where $\mathbf{B}=\underset{1\leq i\leq k,1\leq j\leq \ell }{\cup }\left[
C_{s/2}\left( z_{1,i}\right) \times C_{s/2}\left( z_{2,j}\right) \right] .$
This implies that%
\begin{equation*}
\left( \Phi \left( u\right) ,\Phi \left( v\right) \right) \in \mathbf{B}%
\text{ for all }\left( u,v\right) \in \widehat{M}(\varepsilon ,\delta
_{\varepsilon }).
\end{equation*}%
Thus, $\widehat{M}(\varepsilon ,\delta _{\varepsilon })\subset \underset{%
1\leq i\leq k,1\leq j\leq \ell }{\cup }N_{i,j}\left( \varepsilon \right) .$
\end{proof}

\begin{lemma}
\label{L5.3}Assume that $\lim\limits_{|x|\rightarrow \infty
}V_{l}(x)=\lambda _{l}$ for all $l=1,2.$ Then%
\begin{equation*}
\varepsilon ^{-N}c_{\varepsilon }=\widehat{\alpha }_{1}+\widehat{\alpha }%
_{2}~\text{ for all }\varepsilon >0.
\end{equation*}
\end{lemma}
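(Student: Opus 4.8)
The plan is to combine the lower bound from (the proof of) Lemma~\ref{L2.3}, which in fact holds for every $\varepsilon>0$, with a matching upper bound produced by a family of ``spread-out'' test functions. For the lower bound, if $(u,v)\in\mathbf{N}_\varepsilon$ then adding the two Nehari identities gives $\mu_1\int_{\mathbb{R}^N}|u^+|^{2p+2}\mathrm{d}x+\mu_2\int_{\mathbb{R}^N}|v^+|^{2p+2}\mathrm{d}x=\|(u,v)\|_H^2-2\int_{\mathbb{R}^N}\beta(x)|u^+|^{q+1}|v^+|^{q+1}\mathrm{d}x$, so that
\begin{equation*}
J_\varepsilon(u,v)=\frac{p}{2p+2}\|(u,v)\|_H^2+\Big(\frac{1}{p+1}-\frac{1}{q+1}\Big)\int_{\mathbb{R}^N}\beta(x)|u^+|^{q+1}|v^+|^{q+1}\mathrm{d}x\ \geq\ \frac{p}{2p+2}\|(u,v)\|_H^2;
\end{equation*}
since $\beta\leq0$ and $q\leq p$, together with Lemma~\ref{L2.1} this yields $J_\varepsilon(u,v)\geq\varepsilon^N(\widehat{\alpha}_1+\widehat{\alpha}_2)$, hence $\varepsilon^{-N}c_\varepsilon\geq\widehat{\alpha}_1+\widehat{\alpha}_2$. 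It therefore suffices to show that for each $\eta>0$ there is $(u,v)\in\mathbf{N}_\varepsilon$ with $\varepsilon^{-N}J_\varepsilon(u,v)<\widehat{\alpha}_1+\widehat{\alpha}_2+\eta$.

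After the rescaling $x=\varepsilon z$ (so $\varepsilon^{-N}J_\varepsilon(u,v)=\widetilde{J}_\varepsilon(\widetilde{u},\widetilde{v})$ with $\widetilde{u}(z)=u(\varepsilon z)$), I would fix a cut-off $\psi_\rho\in C^1(\mathbb{R}^N,[0,1])$ with $\psi_\rho\equiv1$ on $B^N(0;\rho/2)$ and $\mathrm{supp}\,\psi_\rho\subset B^N(0;\rho)$, and for $\xi_1,\xi_2\in\mathbb{R}^N$ put
\begin{equation*}
w_{1,\xi_1}(z)=\widehat{\omega}_1(z-\xi_1)\,\psi_\rho(z-\xi_1),\qquad w_{2,\xi_2}(z)=\widehat{\omega}_2(z-\xi_2)\,\psi_\rho(z-\xi_2).
\end{equation*}
These functions are nonnegative and nontrivial, and if $|\xi_1-\xi_2|>2\rho$ their supports are disjoint, so $\int_{\mathbb{R}^N}\beta(\varepsilon z)|w_{1,\xi_1}|^{q+1}|w_{2,\xi_2}|^{q+1}\mathrm{d}z=0$ and the two constraints defining $\mathbf{N}_\varepsilon$ decouple; hence there are unique $t_1,t_2>0$ with $(t_1w_{1,\xi_1},t_2w_{2,\xi_2})\in\mathbf{N}_\varepsilon$, and for that point the coupling term vanishes, so that
\begin{equation*}
\widetilde{J}_\varepsilon(t_1w_{1,\xi_1},t_2w_{2,\xi_2})=\sum_{l=1}^{2}\max_{t>0}\widetilde{I}^{\,\varepsilon}_l(t\,w_{l,\xi_l}),\quad\text{where}\quad\widetilde{I}^{\,\varepsilon}_l(w)=\frac{1}{2}\int_{\mathbb{R}^N}\big(|(-\triangle)^{\frac{\alpha}{2}}w|^2+V_l(\varepsilon z)w^2\big)\mathrm{d}z-\frac{\mu_l}{2p+2}\int_{\mathbb{R}^N}|w^+|^{2p+2}\mathrm{d}z.
\end{equation*}

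The upper bound then follows from a double limit. Since $\int_{\mathbb{R}^N}|(-\triangle)^{\frac{\alpha}{2}}w|^2\mathrm{d}z$ and $\int_{\mathbb{R}^N}|w^+|^{2p+2}\mathrm{d}z$ are translation invariant, $\int_{\mathbb{R}^N}|(-\triangle)^{\frac{\alpha}{2}}w_{l,\xi_l}|^2\mathrm{d}z=\int_{\mathbb{R}^N}|(-\triangle)^{\frac{\alpha}{2}}(\widehat{\omega}_l\psi_\rho)|^2\mathrm{d}z$ and $\int_{\mathbb{R}^N}|w^+_{l,\xi_l}|^{2p+2}\mathrm{d}z=\int_{\mathbb{R}^N}|\widehat{\omega}_l\psi_\rho|^{2p+2}\mathrm{d}z$ for every $\xi_l$; while, $\varepsilon>0$ being fixed and $V_l(y)\to\lambda_l$ as $|y|\to\infty$, the substitution $z=y+\xi_l$ and the compactness of $\mathrm{supp}\,\psi_\rho$ give $\int_{\mathbb{R}^N}V_l(\varepsilon z)w_{l,\xi_l}^2\,\mathrm{d}z\to\lambda_l\int_{\mathbb{R}^N}(\widehat{\omega}_l\psi_\rho)^2\,\mathrm{d}z$ as $|\xi_l|\to\infty$. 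Plugging these into the explicit formula
\begin{equation*}
\max_{t>0}\widetilde{I}^{\,\varepsilon}_l(t\,w_{l,\xi_l})=\frac{p}{2p+2}\,\frac{\big(\int_{\mathbb{R}^N}(|(-\triangle)^{\frac{\alpha}{2}}w_{l,\xi_l}|^2+V_l(\varepsilon z)w_{l,\xi_l}^2)\,\mathrm{d}z\big)^{(p+1)/p}}{\big(\mu_l\int_{\mathbb{R}^N}|w^+_{l,\xi_l}|^{2p+2}\,\mathrm{d}z\big)^{1/p}}
\end{equation*}
shows $\max_{t>0}\widetilde{I}^{\,\varepsilon}_l(t\,w_{l,\xi_l})\to\max_{t>0}\widehat{I}_l(t\,\widehat{\omega}_l\psi_\rho)$ as $|\xi_l|\to\infty$; and $\widehat{\omega}_l\psi_\rho\to\widehat{\omega}_l$ in $H^\alpha(\mathbb{R}^N)$ as $\rho\to\infty$ then forces $\max_{t>0}\widehat{I}_l(t\,\widehat{\omega}_l\psi_\rho)\to\max_{t>0}\widehat{I}_l(t\,\widehat{\omega}_l)=\widehat{I}_l(\widehat{\omega}_l)=\widehat{\alpha}_l$, because $\widehat{\omega}_l\in\widehat{\mathbf{M}}_l$. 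Thus, given $\eta>0$, first taking $\rho$ large and then $|\xi_1|,|\xi_2|$ large with $|\xi_1-\xi_2|>2\rho$ makes $\widetilde{J}_\varepsilon(t_1w_{1,\xi_1},t_2w_{2,\xi_2})<\widehat{\alpha}_1+\widehat{\alpha}_2+\eta$; undoing the rescaling gives $(u,v)\in\mathbf{N}_\varepsilon$ with $\varepsilon^{-N}J_\varepsilon(u,v)<\widehat{\alpha}_1+\widehat{\alpha}_2+\eta$, whence $\varepsilon^{-N}c_\varepsilon\leq\widehat{\alpha}_1+\widehat{\alpha}_2$, and with the lower bound above this gives $\varepsilon^{-N}c_\varepsilon=\widehat{\alpha}_1+\widehat{\alpha}_2$ for all $\varepsilon>0$.

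The genuinely nonlocal, and only delicate, step is the last two limits: that truncating the fractional Sobolev norm of $\widehat{\omega}_l$ costs only an error vanishing as $\rho\to\infty$, uniformly with respect to the far translations $\xi_l$. I would treat this exactly as in Lemma~3.2 of \cite{lw1}, using the decay estimate $\widehat{\omega}_l(z)\leq C(1+|z|)^{-(N+2\alpha)}$ of Proposition~\ref{P1.1}(ii) to control the Gagliardo-seminorm error produced by the factor $1-\psi_\rho$ and by the long-range cross terms. Everything else — the decoupling of the Nehari system on disjoint supports, the existence of the scalings $t_1,t_2$, and the elementary identity for $J_\varepsilon$ on $\mathbf{N}_\varepsilon$ — is routine.
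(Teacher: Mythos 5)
Your proof is correct and follows essentially the same strategy as the paper: the lower bound $\varepsilon^{-N}c_\varepsilon\ge\widehat{\alpha}_1+\widehat{\alpha}_2$ comes from Lemma~\ref{L2.3}, and the matching upper bound comes from test pairs built by cutting off and translating $\widehat{\omega}_1,\widehat{\omega}_2$ so their supports are disjoint (killing the coupling term) and far from the origin (so $V_l\to\lambda_l$ applies). The paper couples the cutoff radius and the separation into a single parameter $R$ (cutoff at scale $R$, centers at $\pm Re$), taking one limit $R\to\infty$, whereas you decouple them into $\rho$ (cutoff) and $\xi_l$ (translation) and take a double limit, $|\xi_l|\to\infty$ first then $\rho\to\infty$; both routes are valid, and yours is if anything slightly cleaner because the convergence $\int V_l(\varepsilon z)w_{l,\xi_l}^2\,\mathrm{d}z\to\lambda_l\int(\widehat{\omega}_l\psi_\rho)^2\,\mathrm{d}z$ is then immediate for a fixed compactly supported profile, independently of the size of $\varepsilon$. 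One small remark: the uniformity in $\xi_l$ of the truncation error that you flag at the end as "the only delicate step" is actually automatic in your setup, by the translation invariance of the Gagliardo seminorm that you yourself invoke — the genuinely nonlocal point is just the convergence $\widehat{\omega}_l\psi_\rho\to\widehat{\omega}_l$ in $H^\alpha(\mathbb{R}^N)$ as $\rho\to\infty$, which the decay from Proposition~\ref{P1.1}(ii) indeed gives.
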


\begin{proof}
To prove Lemma~\ref{L5.3}, we may define the test functions $u_{R}$ and $%
v_{R}$ by
\begin{eqnarray}
&&\left( u_{R}\left( x\right) ,v_{R}\left( x\right) \right)  \label{5.3} \\
&=&\left( \widehat{\omega }_{1}\left( \frac{x+Re}{\varepsilon }\right) \psi
_{R}\left( \frac{x+Re}{\varepsilon }\right) ,\widehat{\omega }_{2}\left(
\frac{x-Re}{\varepsilon }\right) \psi _{R}\left( \frac{x-Re}{\varepsilon }%
\right) \right) \,,  \notag
\end{eqnarray}%
where $R>1$, $e\in S^{N-1}=\left\{ x\in \mathbb{R}^{N}:\left\vert
x\right\vert =1\right\} $, $\widehat{\omega }_{l}$ is the unique positive
radial solution of Eq. $\left( \widehat{E}_{l}\right) $ for $l=1,2$, and the
function $\psi _{R}\in C^{1}\left( \mathbb{R}^{N},\left[ 0,1\right] \right) $
with compact support satisfies
\begin{equation*}
\psi _{R}\left( x\right) =\left\{
\begin{array}{ll}
1, & \left\vert x\right\vert <R-1, \\
0, & \left\vert x\right\vert >R,%
\end{array}%
\right.
\end{equation*}%
and $\left\vert \nabla \psi _{R}\right\vert \leq 2$ in $\mathbb{R}^{N}.$
Notice that $\widehat{I}_{l}\left( \widehat{\omega }_{l}\right) =\widehat{%
\alpha }_{1}$ for $l=1,2.$ Obviously,
\begin{equation}
\int_{\mathbb{R}^{N}}\beta (x)|u_{R}^{+}|^{q+1}|v_{R}^{+}|^{q+1}\mathrm{d}%
x=0.  \label{5.4}
\end{equation}%
By $(\ref{5.3}),$ it is easy to find two positive numbers $t_{R}$ and $s_{R}$
such that
\begin{equation}
t_{R}^{2}\int_{\mathbb{R}^{N}}\left( \varepsilon ^{2\alpha }\left\vert
(-\triangle )^{\frac{\alpha }{2}}u_{R}\right\vert
^{2}+V_{1}(x)u_{R}^{2}\right) \mathrm{d}x=\mu _{1}t_{R}^{2p+2}\int_{\mathbb{R%
}^{N}}|u_{R}^{+}|^{2p+2}\mathrm{d}x  \label{5.5}
\end{equation}%
and
\begin{equation}
s_{R}^{2}\int_{\mathbb{R}^{N}}\left( \varepsilon ^{2\alpha }\left\vert
(-\triangle )^{\frac{\alpha }{2}}v_{R}\right\vert
^{2}+V_{2}(x)v_{R}^{2}\right) \mathrm{d}x=\mu _{2}s_{R}^{2p+2}\int_{\mathbb{R%
}^{N}}|v_{R}^{+}|^{2p+2}\mathrm{d}x.  \label{5.6}
\end{equation}%
It follows from $(\ref{5.4})-(\ref{5.6})$ that $\left(
t_{R}u_{R},s_{R}v_{R}\right) \in \mathbf{N}_{\varepsilon }$ and%
\begin{equation*}
J_{\varepsilon }\left( t_{R}u_{R},s_{R}v_{R}\right) \geq c_{\varepsilon }.
\end{equation*}%
We want to show $\left( t_{R},s_{R}\right) \rightarrow \left( 1,1\right) $
as $R\rightarrow \infty .$ Using $(\ref{5.3}),(\ref{5.5})$ and change of
variables, it is easy to check that
\begin{equation*}
\int_{\mathbb{R}^{N}}\left( \left\vert (-\triangle )^{\frac{\alpha }{2}}%
\widehat{\omega }_{1}\right\vert ^{2}+V_{1}\left( \varepsilon x-Re\right)
\widehat{\omega }_{1}^{2}\right) \mathrm{d}x=\mu _{1}t_{R}^{2p}\int_{\mathbb{%
R}^{N}}|\widehat{\omega }_{1}^{+}|^{2p+2}\mathrm{d}x+o_{R}\left( 1\right) \,,
\end{equation*}%
where $o_{R}\left( 1\right) \rightarrow 0$ as $R\rightarrow \infty .$ Hence,
$t_{R}\rightarrow 1$ as $R\rightarrow \infty $ because $\lim_{|x|\rightarrow
\infty }V_{1}(x)=\lambda _{1}$ and
\begin{equation*}
\int_{\mathbb{R}^{N}}\left( \left\vert (-\triangle )^{\frac{\alpha }{2}}%
\widehat{\omega }_{1}\right\vert ^{2}+\lambda _{1}\widehat{\omega }%
_{1}^{2}\right) \mathrm{d}x=\mu _{1}\int_{\mathbb{R}^{N}}|\widehat{\omega }%
_{1}^{+}|^{2p+2}\mathrm{d}x.
\end{equation*}%
Similarly, we may obtain $s_{R}\rightarrow 1$ as $R\rightarrow \infty $.
Consequently, due to $\lim\limits_{|x|\rightarrow \infty }V_{l}(x)=\lambda
_{l}$ for $l=1,2$, we have
\begin{equation*}
\varepsilon ^{-N}J_{\varepsilon }\left( t_{R}u_{R},s_{R}v_{R}\right)
\rightarrow \widehat{\alpha }_{1}+\widehat{\alpha }_{2}\text{ as }%
R\rightarrow \infty .
\end{equation*}%
This implies%
\begin{equation*}
\varepsilon ^{-N}c_{\varepsilon }\leq \widehat{\alpha }_{1}+\widehat{\alpha }%
_{2}\text{ for all }\varepsilon >0.
\end{equation*}%
Thus, by Lemma \ref{L2.3}, we have $\varepsilon ^{-N}c_{\varepsilon }=%
\widehat{\alpha }_{1}+\widehat{\alpha }_{2}$ for all $\varepsilon >0.$
\end{proof}

\noindent \textbf{We are now ready to prove Theorem \ref{t1.2}}. Theorem \ref%
{t1.2} can be obtained directly from Lemma \ref{L2.3}, Lemma \ref{L5.2} and
Lemma \ref{L5.3}.

\end{document}